\newtheorem{theorem}{Theorem}[section]
\newtheorem{conjecture}[theorem]{Conjecture}
\newtheorem{lemma}[theorem]{Lemma}
\numberwithin{equation}{section}
\begin{document}

\title[]{Dwork-type $q$-congruences through the $q$-Lucas theorem}


\author{Victor J. W. Guo}
\address{School of Mathematics and Statistics, Huaiyin Normal
University, Huai'an 223300, Jiangsu, People's Republic of China}
\email{jwguo@math.ecnu.edu.cn}

\subjclass[2010]{11A07, 11B65}
\keywords{cyclotomic polynomial; $q$-congruence; supercongruence; Swisher's conjecture; $q$-Lucas theorem}

\begin{abstract}
Employing the $q$-Lucas theorem and some known $q$-supercongruences, we give some Dwork-type $q$-congruences, confirming
three conjectures in [J. Combin. Theory, Ser. A 178 (2021), Art.~105362].
As conclusions, we obtain the following supercongruences:
for any prime $p\equiv 1\pmod{4}$ and positive integer $r$,
\begin{align*}
\sum_{k=0}^{(p^r-1)/2} \frac{(\frac{1}{2})_k^3}{k!^3}
&\equiv -\Gamma_p(\tfrac{1}{4})^4 \sum_{k=0}^{(p^{r-1}-1)/2} \frac{(\frac{1}{2})_k^3}{k!^3} \pmod{p^{r+1}}, \\
\sum_{k=0}^{p^r-1} \frac{(\frac{1}{2})_k^3}{k!^3}
&\equiv -\Gamma_p(\tfrac{1}{4})^4 \sum_{k=0}^{p^{r-1}-1} \frac{(\frac{1}{2})_k^3}{k!^3} \pmod{p^{r+1}},
\end{align*}
where $\Gamma_p(x)$ stands for the $p$-adic Gamma function. The first one confirms a weaker form of Swisher's (H.3)
conjecture for $p\equiv 1\pmod{4}$, which originally predicts that the supercongruence is true modulo $p^{3r}$.

\end{abstract}

\maketitle

\section{Introduction}\label{sec1}

In 1914, Ramanujan \cite{Ramanujan} listed quite a few hypergeometric series
representations of $1/\pi$, including
\begin{equation*}
\sum_{k=0}^\infty (6k+1)\frac{(\frac{1}{2})_k^3}{k!^3 4^k}
=\frac{4}{\pi}, 
\end{equation*}
where $(a)_n=a(a+1)\cdots(a+n-1)$ stands for the rising factorial. In
1997, Van Hamme \cite{Hamme} numerically discovered 13 remarkable $p$-adic
analogues of Ramanujan-type formulas, such as
\begin{align}
\sum_{k=0}^{(p-1)/2} \frac{(\frac{1}{2})_k^3}{k!^3}
&\equiv
\begin{cases} -\displaystyle \Gamma_p(\tfrac{1}{4})^4  \pmod{p^2}, &\text{if $p\equiv 1\pmod 4$,}\\[5pt]
 0\pmod{p^2}, &\text{if $p\equiv 3\pmod 4$,}
\end{cases} \label{eq:h2} \\[5pt]
\sum_{k=0}^{(p-1)/2} (6k+1)\frac{(\frac{1}{2})_k^3}{k!^3 4^k}
&\equiv (-1)^{(p-1)/2}p\pmod{p^4}\quad\text{if $p>3$},  \label{eq:j2}
\end{align}
where $p$ is a odd prime and $\Gamma_p(x)$ is the $p$-adic Gamma function. Three of them were proved by Van Hamme himself in \cite{Hamme}.
For generalizations of \eqref{eq:h2} modulo $p^3$ and $p^4$, see \cite{LR,Liu}.
The supercongruence \eqref{eq:j2} was first confirmed by Long \cite{Long}. It was not until 2016 that Osburn and Zudilin \cite{OZ} proved the last remaining case
of Van Hamme's conjectural supercongruences. In 2019, the author and Zudilin \cite{GuoZu2} obtained a $q$-analogue
of \eqref{eq:h2} as follows:
modulo $\Phi_n(q)^2$,
\begin{align}
\sum_{k=0}^{(n-1)/2}\frac{(q;q^2)_k^2(q^2;q^4)_k}{(q^2;q^2)_k^2(q^4;q^4)_k}q^{2k}
\equiv\begin{cases}
\dfrac{(q^2;q^4)_{(n-1)/4}^2}{(q^4;q^4)_{(n-1)/4}^2}q^{(n-1)/2} &\text{if}\; n\equiv1\pmod4, \\[10pt]
0 &\text{if}\; n\equiv3\pmod4,
\end{cases}
\label{eq:mod-phi}
\end{align}
which extends an early result due to the author and Zeng \cite[Corollary 1.2]{GZ}. Further generalizations of \eqref{eq:mod-phi} modulo
$\Phi_n(q)^3$ can be found in the literature \cite{Guo-ijnt,Guo-rima,Wang,Wei}.
At the moment we need to be familiar with the standard $q$-notation. The {\em $q$-shifted factorial}
is defined by $(a;q)_n=(1-a)(1-aq)\cdots (1-aq^{n-1})$ for $n\geqslant 1$, $(a;q)_0=1$, and
$\Phi_n(q)$ denotes the $n$-th {\em cyclotomic polynomial} in $q$, which can be written as
\begin{align*}
\Phi_n(q)=\prod_{\substack{1\leqslant k\leqslant n\\ \gcd(n,k)=1}}(q-e^{2\pi ik/n}),
\end{align*}
where $i^2=-1$. For simplicity, we will often adopt the abbreviated notation $(a_1,a_2,\ldots,a_m;q)_n=(a_1;q)_n (a_2;q)_n\cdots (a_m;q)_n$
for $n\geqslant 0$. Furthermore, the {\em $q$-integer} is defined as $[n]_q=(1-q^n)/(1-q)$.

Let $A(q)$ and $B(q)$ be two rational functions in $q$ and $P(q)$ a polynomial in $q$.
We call $A(q)$ and $B(q)$ congruent modulo $P(q)$, denoted by $A(q)\equiv B(q)\pmod{P(q)}$, if the numerator of the reduced fraction $A(q)-B(q)$ is divisible by $P(q)$
in the polynomial ring $\mathbb{Z}[q]$.

In 2015, Swisher \cite{Swisher} proved several
supercongruences of Van Hamme by utilizing Long's method.
Meanwhile, she proposed some conjectures on supercongruences that
generalize the (A.2)--(L.2) supercongruences of Van Hamme. For example,
Swisher's conjectural (C.3) and (H.3) supercongruences can
be respectively stated as follows: for any prime $p>3$ and positive integer $r$,
\begin{align}
&\sum_{k=0}^{(p^r-1)/2} (6k+1)\frac{(\frac{1}{2})_k^3}{k!^3 4^k}
\equiv (-1)^{(p-1)/2}p \sum_{k=0}^{(p^{r-1}-1)/2}
(6k+1)\frac{(\frac{1}{2})_k^3}{k!^3 4^k} \pmod{p^{4r}}, \label{eq:j3} \\[5pt]
&\sum_{k=0}^{(p^r-1)/2} \frac{(\frac{1}{2})_k^3}{k!^3}
\equiv \begin{cases} -\displaystyle \Gamma_p(\tfrac{1}{4})^4 \sum_{k=0}^{(p^{r-1}-1)/2}\frac{(\frac{1}{2})_k^3}{k!^3}   \pmod{p^{3r}},
&\!\!\!\text{if $p\equiv 1 \!\!\!\pmod 4$,}\\[15pt]
\displaystyle p^2\sum_{k=0}^{(p^{r-2}-1)/2}\frac{(\frac{1}{2})_k^3}{k!^3} \pmod{p^{3r-1}},
&\!\!\!\text{if $p\equiv 3   \!\!\!\pmod 4$ and $r\geqslant 2$.}  \label{eq:h3}
\end{cases}
\end{align}

Given a prime $p$, we say that a power series $f(z)=\sum_{k=0}^\infty A_k z^k$  satisfies the Dwork congruence \cite{Dwork,MV}
if
\begin{equation}
\frac{f_{r+1}(z)}{f_r(z^p)}\equiv \frac{f_{r}(z)}{f_{r-1}(z^p)}\pmod{p^r\mathbb{Z}_p[[z]]}\quad\text{for $r=1,2,\ldots$},
\label{eq:condition}
\end{equation}
where
$$
f_r(z)=\sum_{k=0}^{p^r-1}A_k z^k
$$
is a truncation of $f(z)$.
Further, if we can replace the modulus in \eqref{eq:condition} by $p^s\mathbb{Z}_p[[z]]$ for $s=s_r>r$, then we will also say that $f(z)$ satisfies
a Dwork supercongruence. Formally, we require the condition $f_1(z^p)=\sum_{k=0}^{p-1}A_k z^{pk}\not\equiv 0\pmod{p\mathbb{Z}_p[[z]]}$
so that \eqref{eq:condition} is well-defined. But this may be weakened to $f_1(z^p)\not\equiv 0\pmod{p^m\mathbb{Z}_p[[z]]}$
provided that the congruences \eqref{eq:condition} hold modulo $p^{mr}\mathbb{Z}_p[[z]]$ for certain $m>1$.
Thus, it is reasonable to call Swisher's conjectures in \cite{Swisher} Dwork-type supercongruences.

The author \cite{Guo-c3} proved that \eqref{eq:j3} is true modulo $p^{3r}$ by establishing its $q$-analogue,
and he \cite[Corollary 4.2]{Guo-h2} also proved that the second case of \eqref{eq:h3} is true modulo $p^{2r+2}$.
Recently, the author and Zudilin \cite{GuoZu4} proved more Dwork-type supercongruences, including
Swisher's supercongruences (B.3) and (L.3), and partial cases of Swisher's supercongruences (E.3) and (F.3).

In this paper, we shall give some Dwork-type $q$-congruences ($q$-analogues of Dwork-type congruences)
by using the $q$-Lucas theorem (see Section 2) and some known $q$-supercongruences.
Our first result can be stated as follows.

\begin{theorem}\label{thm:main-1}
Let $n$ be a positive integer with $n\equiv 1\pmod{4}$ and let $r\geqslant 1$.
Then, modulo $\Phi_{n^r}(q)\prod_{j=1}^{r}\Phi_{n^j}(q)$,
\begin{align}
\sum_{k=0}^{(n^r-1)/d}\frac{(q;q^2)_k^2(q^2;q^4)_k}{(q^2;q^2)_k^2(q^4;q^4)_k}q^{2k}
&\equiv [n]\frac{(q^3;q^4)_{(n^r-1)/2} (q^{5n};q^{4n})_{(n^{r-1}-1)/2} }{(q^5;q^4)_{(n^r-1)/2} (q^{3n};q^{4n})_{(n^{r-1}-1)/2}} \notag\\[5pt]
&\quad\times\sum_{k=0}^{(n^{r-1}-1)/d}\frac{(q^n;q^{2n})_k^2(q^{2n};q^{4n})_k}{(q^{2n};q^{2n})_k^2(q^{4n};q^{4n})_k}q^{2nk},  \label{eq:main-1}
\end{align}
where $d=1,2$.
\end{theorem}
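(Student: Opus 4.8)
The plan is to exploit the factorization of the modulus
\[
\Phi_{n^r}(q)\prod_{j=1}^{r}\Phi_{n^j}(q)=\Phi_{n^r}(q)^2\prod_{j=1}^{r-1}\Phi_{n^j}(q)
\]
into the pairwise coprime polynomials $\Phi_{n^r}(q)^2$ and $\Phi_{n^j}(q)$ for $1\le j\le r-1$. Since distinct cyclotomic polynomials in $q$ are coprime in $\mathbb{Z}[q]$, it suffices to verify \eqref{eq:main-1} modulo each of these factors separately and then to recombine by the Chinese Remainder Theorem for polynomials. Throughout I abbreviate the summand by $c_k(q)=\frac{(q;q^2)_k^2(q^2;q^4)_k}{(q^2;q^2)_k^2(q^4;q^4)_k}q^{2k}$, write $R(q)$ for the prefactor on the right-hand side of \eqref{eq:main-1}, and set $T_j(q)=\frac{(q^2;q^4)_{(n^j-1)/4}^2}{(q^4;q^4)_{(n^j-1)/4}^2}\,q^{(n^j-1)/2}$, so that the right-hand side of \eqref{eq:mod-phi} with $n\mapsto n^j$ is exactly $T_j(q)$ (note $T_0(q)=1$, and $(n^j-1)/4\in\mathbb{Z}$ as $n\equiv1\pmod4$).

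For a fixed $j$ with $1\le j\le r-1$ I would reduce modulo $\Phi_{n^j}(q)$ by means of the $q$-Lucas theorem (Section~2). Splitting the summation index as $k=\ell n^j+i$ with $0\le i<n^j$, the $q$-Lucas theorem yields the block factorization $c_{\ell n^j+i}(q)\equiv\frac{(\frac{1}{2})_\ell^3}{\ell!^3}\,c_i(q)\pmod{\Phi_{n^j}(q)}$, the ``high-digit'' contribution being the classical coefficient obtained in the limit $q\to1$ (here one uses $q^{n^j}\equiv1$, which turns the period factors and the power $q^{2\ell n^j}$ into their classical counterparts). Summing over a complete block and invoking \eqref{eq:mod-phi} with $n\mapsto n^j$ collapses the inner sum to $T_j(q)$. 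Carrying out the index bookkeeping for both $d\in\{1,2\}$ (for $d=2$ one writes $(n^r-1)/2=n^j\frac{n^{r-j}-1}{2}+\frac{n^j-1}{2}$, and analogously for $d=1$), the left-hand side of \eqref{eq:main-1} becomes a classical outer sum $\sum_\ell\frac{(\frac{1}{2})_\ell^3}{\ell!^3}$ times $T_j(q)$. Applying the same argument to the sub-sum on the right-hand side, now with $q\mapsto q^n$ so that the relevant root has order $n^{j-1}$, produces the \emph{identical} classical outer sum times $T_{j-1}(q^n)$. That common factor cancels, and the congruence modulo $\Phi_{n^j}(q)$ is reduced to the single explicit product congruence $T_j(q)\equiv R(q)\,T_{j-1}(q^n)\pmod{\Phi_{n^j}(q)}$.

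The remaining factor $\Phi_{n^r}(q)^2$ is treated in the same spirit but to second order and without $q$-Lucas. First I would note that the two cases $d=1,2$ agree here: for $(n^r+1)/2\le k\le n^r-1$ the numerator $(q;q^2)_k^2$ supplies the factor $(1-q^{n^r})^2$, divisible by $\Phi_{n^r}(q)^2$, while the denominator of $c_k(q)$ involves only even powers of $q$ and is prime to $\Phi_{n^r}(q)$; hence the tail terms vanish modulo $\Phi_{n^r}(q)^2$ and the upper limit may be taken to be $(n^r-1)/2$. Then \eqref{eq:mod-phi} with $n\mapsto n^r$ evaluates the left-hand side as $T_r(q)$ modulo $\Phi_{n^r}(q)^2$, while \eqref{eq:mod-phi} with $n\mapsto n^{r-1}$ and $q\mapsto q^n$ evaluates the sub-sum as $T_{r-1}(q^n)$; the latter is legitimate modulo $\Phi_{n^r}(q)^2$ because every primitive $n^r$-th root of unity $\zeta$ has $\zeta^n$ of order $n^{r-1}$, so $\Phi_{n^r}(q)\mid\Phi_{n^{r-1}}(q^n)$. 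Thus this case too reduces to the product congruence $T_r(q)\equiv R(q)\,T_{r-1}(q^n)$, now modulo the square $\Phi_{n^r}(q)^2$.

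The heart of the matter, and the step I expect to be the main obstacle, is therefore the uniform family of explicit congruences $T_j(q)\equiv R(q)\,T_{j-1}(q^n)$ for $1\le j\le r$, read modulo $\Phi_{n^j}(q)$ when $j<r$ and modulo $\Phi_{n^r}(q)^2$ when $j=r$. These are identities among ratios of $q$-shifted factorials, and the factor $[n]$ together with the quotients $\frac{(q^3;q^4)_{(n^r-1)/2}}{(q^5;q^4)_{(n^r-1)/2}}$ and $\frac{(q^{5n};q^{4n})_{(n^{r-1}-1)/2}}{(q^{3n};q^{4n})_{(n^{r-1}-1)/2}}$ appearing in $R(q)$ are precisely engineered to balance the orders of vanishing at the primitive $n^j$-th roots of unity; their verification will again lean on the $q$-Lucas theorem to relate $(q^2;q^4)_{(n^j-1)/4}$ to $(q^{2n};q^{4n})_{(n^{j-1}-1)/4}$ modulo $\Phi_{n^j}(q)$. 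The instance $j=r$ is the most delicate, since it is a congruence modulo the \emph{square} $\Phi_{n^r}(q)^2$ and hence requires matching not merely the values but also the first-order terms in the $\Phi_{n^r}(q)$-adic expansion at each primitive $n^r$-th root of unity.
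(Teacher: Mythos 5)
Your reduction has exactly the architecture of the paper's own proof: you split the modulus into the pairwise coprime pieces $\Phi_{n^r}(q)^2$ and $\Phi_{n^j}(q)$, $1\leqslant j\leqslant r-1$; you treat $\Phi_{n^r}(q)^2$ by applying \eqref{eq:mod-phi} to both sides together with the divisibility $\Phi_{n^r}(q)\mid \Phi_{n^{r-1}}(q^n)$ and the vanishing of the tail terms; and you treat each $\Phi_{n^j}(q)$ by the $q$-Lucas block factorization $c_{\ell n^j+i}(q)\equiv \frac{(\frac12)_\ell^3}{\ell!^3}c_i(q)$, which is precisely the paper's Lemma~\ref{lem:2} (including the handling of the partial block when $d=2$). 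All of this is correct.

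The gap is the step you yourself flag as ``the heart of the matter'': the family of prefactor congruences $T_j(q)\equiv R(q)\,T_{j-1}(q^n)$ is never proved, and your forecast of how it would be proved is off in two respects. First, the case you single out as most delicate, $j=r$, is in fact the easy one. By the elementary congruence \eqref{eq:equiv} (applied with $n\mapsto n^r$ in the variable $q$, and with $n\mapsto n^{r-1}$ in the variable $q^n$), one may replace $T_r(q)$ by $[n^r]\frac{(q^3;q^4)_{(n^r-1)/2}}{(q^5;q^4)_{(n^r-1)/2}}$ and $T_{r-1}(q^n)$ by $[n^{r-1}]_{q^n}\frac{(q^{3n};q^{4n})_{(n^{r-1}-1)/2}}{(q^{5n};q^{4n})_{(n^{r-1}-1)/2}}$ modulo $\Phi_{n^r}(q)^2$; after this substitution the desired relation $T_r(q)\equiv R(q)T_{r-1}(q^n)$ is an \emph{exact} algebraic identity, because $R(q)\,[n^{r-1}]_{q^n}\frac{(q^{3n};q^{4n})_{(n^{r-1}-1)/2}}{(q^{5n};q^{4n})_{(n^{r-1}-1)/2}}=[n][n^{r-1}]_{q^n}\frac{(q^3;q^4)_{(n^r-1)/2}}{(q^5;q^4)_{(n^r-1)/2}}$ and $[n][n^{r-1}]_{q^n}=[n^r]$. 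No matching of first-order terms at the roots of unity is required. Second, the genuinely hard case is $j<r$: there $R(q)$ is built from $q$-factorials of length about $n^r/2$, while the congruence is asked modulo $\Phi_{n^j}(q)$ with $j$ strictly smaller, so one must show that $R(q)$ ``collapses'' to its level-$j$ analogue, i.e.
\begin{align*}
[n]\frac{(q^3;q^4)_{(n^r-1)/2} (q^{5n};q^{4n})_{(n^{r-1}-1)/2} }{(q^5;q^4)_{(n^r-1)/2} (q^{3n};q^{4n})_{(n^{r-1}-1)/2}}
\equiv [n]\frac{(q^3;q^4)_{(n^j-1)/2} (q^{5n};q^{4n})_{(n^{j-1}-1)/2} }{(q^5;q^4)_{(n^j-1)/2} (q^{3n};q^{4n})_{(n^{j-1}-1)/2}} \pmod{\Phi_{n^j}(q)}.
\end{align*}
This is the paper's Lemma~\ref{lem:3}, and its proof does \emph{not} go through the $q$-Lucas theorem at all (which acts on $q$-binomial coefficients via digit decompositions, not on these ratios of $q$-shifted factorials of ``fractional'' lengths). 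Instead one rewrites the prefactor as a balanced product via $(1-q^{m-n^r})(1-q^{m+n^r})\equiv(1-q^m)^2\pmod{\Phi_N(q)^2}$ for $N\mid n^r$, splits off the factors whose exponents are divisible by $n$ or lie below $n^j$, and then shows by the reflection $q^a\mapsto q^{a-n^r}$ that the remaining quotient of partial products, $\frac{(q^{n^j+1};q^4)_{(n^r-n^j)/4}}{(q^{n^j+n};q^{4n})_{(n^{r-1}-n^{j-1})/4}}$, is congruent modulo $\Phi_{n^j}(q)$ to $\frac{(q^{n^j+3};q^4)_{(n^r-n^j)/4}}{(q^{n^j+3n};q^{4n})_{(n^{r-1}-n^{j-1})/4}}$ and is a unit. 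Without this argument (or an equivalent one) your proof is incomplete precisely at the step on which everything else you set up depends.
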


Note that, in an early paper \cite{Guo-rima}, the author conjectured that \eqref{eq:main-1} holds modulo $\prod_{j=1}^{r}\Phi_{n^j}(q)^2$.

In order to simplify the $q\to 1$ case of \eqref{eq:main-1}, we shall also prove the following supercongruence.

\begin{theorem}\label{thm:main-2}
Let $p\equiv 1\pmod{4}$ be a prime and $r$ a positive integer. Then
\begin{equation}
p\frac{(\frac{3}{4})_{(p^{r}-1)/2}(\frac{5}{4})_{(p^{r-1}-1)/2}}
{(\frac{5}{4})_{(p^{r}-1)/2}(\frac{3}{4})_{(p^{r-1}-1)/2}}
\equiv -\Gamma_p(\tfrac{1}{4})^4 \pmod{p^{2r}}.
\end{equation}
\end{theorem}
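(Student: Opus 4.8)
The plan is to express the left-hand side entirely through Morita's $p$-adic Gamma function $\Gamma_p$ and then collapse everything to powers of $\Gamma_p(\tfrac14)$. The two standard tools are the functional equation $\Gamma_p(x+1)=-x\,\Gamma_p(x)$ for $x\in\mathbb{Z}_p^\times$ (with $\Gamma_p(x+1)=-\Gamma_p(x)$ when $x\in p\mathbb{Z}_p$) and the reflection formula $\Gamma_p(x)\Gamma_p(1-x)=(-1)^{a_0(x)}$, where $a_0(x)\in\{1,\dots,p\}$ is determined by $a_0(x)\equiv x\pmod p$. Throughout I write $m=(p^r-1)/2$ and $m'=(p^{r-1}-1)/2$, and I will use freely that $\Gamma_p$ is unit-valued, so congruences may be multiplied by $\Gamma_p$-values without changing the modulus.

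First I would eliminate the $\tfrac54$-factorials via the elementary identity $(\tfrac54)_N=(4N+1)(\tfrac14)_N$, which rewrites the left-hand side in terms of the quotient $(\tfrac34)_m(\tfrac14)_{m'}/[(\tfrac14)_m(\tfrac34)_{m'}]$ times the scalar $p(4m'+1)/(4m+1)$, where $4m+1=2p^r-1$ and $4m'+1=2p^{r-1}-1$. The heart of the argument is a factorization that isolates the factors divisible by $p$. Using $p\equiv1\pmod4$, a direct check shows that the terms $\tfrac34+j$ with $0\le j<m$ and $p\mid(\tfrac34+j)$ are exactly $p(\tfrac34+t)$ for $t=0,\dots,m'-1$, while those of $\tfrac14+j$ are exactly $p(\tfrac14+t)$ for $t=0,\dots,m'$. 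Multiplying these $p$-divisible factors gives $p^{m'}(\tfrac34)_{m'}$, resp.\ $p^{m'+1}(\tfrac14)_{m'+1}$; combining with the functional equation applied to the coprime part, namely $\prod_{0\le j<m,\ p\nmid(a+j)}(a+j)=(-1)^m\Gamma_p(a+m)/\Gamma_p(a)$, and using $(\tfrac14)_{m'+1}=(4m'+1)(\tfrac14)_{m'}/4$, I would obtain after cancellation the clean reduction
\begin{equation*}
p\,\frac{(\tfrac34)_m\,(\tfrac54)_{m'}}{(\tfrac54)_m\,(\tfrac34)_{m'}}=\frac{4}{2p^r-1}\cdot\frac{\Gamma_p(\tfrac{2p^r+1}{4})\,\Gamma_p(\tfrac14)}{\Gamma_p(\tfrac{2p^r-1}{4})\,\Gamma_p(\tfrac34)}.
\end{equation*}

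It remains to evaluate the right-hand side modulo $p^{2r}$. Setting $w=p^r/2$, I would apply reflection followed by one step of the functional equation to write $\Gamma_p(\tfrac{2p^r-1}{4})=\Gamma_p(-\tfrac14+w)$ as $(-1)^{a_0}/[-(\tfrac14-w)\Gamma_p(\tfrac14-w)]$ with $a_0=a_0(\tfrac{2p^r-1}{4})$, so that the quotient $\Gamma_p(\tfrac{2p^r+1}{4})/\Gamma_p(\tfrac{2p^r-1}{4})$ produces the symmetric product $\Gamma_p(\tfrac14+w)\Gamma_p(\tfrac14-w)$; likewise I would replace $\Gamma_p(\tfrac14)/\Gamma_p(\tfrac34)$ by $(-1)^{a_0(1/4)}\Gamma_p(\tfrac14)^2$ via reflection. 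Since $-(\tfrac14-w)=\tfrac{2p^r-1}{4}$, the rational prefactor collapses to $1$, and collecting signs leaves $(-1)^{a_0(1/4)+a_0((2p^r-1)/4)}\,\Gamma_p(\tfrac14)^2\cdot\Gamma_p(\tfrac14+w)\Gamma_p(\tfrac14-w)$. A short residue computation gives $a_0(\tfrac14)=\tfrac{3p+1}{4}$ and $a_0(\tfrac{2p^r-1}{4})=\tfrac{p-1}{4}$, whose sum is $p$; hence the sign is $(-1)^p=-1$, exactly the $-1$ demanded by the statement.

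The main obstacle is the one genuinely analytic input,
\begin{equation*}
\Gamma_p(\tfrac14+w)\,\Gamma_p(\tfrac14-w)\equiv\Gamma_p(\tfrac14)^2\pmod{p^{2r}},\qquad w=\tfrac{p^r}{2}.
\end{equation*}
This rests on the local analyticity of $\Gamma_p$ together with the integrality of its Taylor coefficients at $\tfrac14$: the map $w\mapsto\Gamma_p(\tfrac14+w)\Gamma_p(\tfrac14-w)$ is even, so its expansion about $w=0$ contains only even powers, the first correction being an integral multiple of $w^2$, and $v_p(w^2)=2r$. I would justify the integrality of the relevant second derivative from the standard structure of the $p$-adic digamma function for $p\ge5$. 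This step, together with careful sign bookkeeping through the two reflection steps, is where essentially all the care is needed; the remainder is the bookkeeping of $p$-divisible factors described above.
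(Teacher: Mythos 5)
Your proposal is correct and is essentially the paper's own proof: the same extraction of the $p$-divisible factors $p(\tfrac34+t)$ and $p(\tfrac14+t)$ reduces the left-hand side to $\frac{4}{2p^r-1}\,\Gamma_p(\tfrac{2p^r+1}{4})\Gamma_p(\tfrac14)\big/\bigl(\Gamma_p(\tfrac{2p^r-1}{4})\Gamma_p(\tfrac34)\bigr)$ (the paper's intermediate expression differs only by one application of the functional equation, keeping $\Gamma_p(\tfrac{2p^r+3}{4})$ instead), followed by the same reflection-formula sign bookkeeping and the same key congruence $\Gamma_p(\tfrac14+\tfrac{p^r}{2})\Gamma_p(\tfrac14-\tfrac{p^r}{2})\equiv\Gamma_p(\tfrac14)^2\pmod{p^{2r}}$. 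For that last step, rather than your sketched appeal to evenness and integrality of higher Taylor coefficients, it suffices to apply the paper's Lemma~\ref{long} (i.e.\ \cite[Theorem 14]{LR}) to each factor separately and multiply, whereupon the linear terms cancel and no control of coefficients beyond the first derivative is needed.
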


For $n$ prime, letting $q\to 1$ in \eqref{eq:main-1}, we are led to the following supercongruences:
for any prime $p\equiv 1\pmod{4}$ and positive integer $r$,
\begin{align}
\sum_{k=0}^{(p^r-1)/2} \frac{(\frac{1}{2})_k^3}{k!^3}
&\equiv -\Gamma_p(\tfrac{1}{4})^4 \sum_{k=0}^{(p^{r-1}-1)/2} \frac{(\frac{1}{2})_k^3}{k!^3} \pmod{p^{r+1}}, \label{eq:dis-1} \\
\sum_{k=0}^{p^r-1} \frac{(\frac{1}{2})_k^3}{k!^3}
&\equiv -\Gamma_p(\tfrac{1}{4})^4 \sum_{k=0}^{p^{r-1}-1} \frac{(\frac{1}{2})_k^3}{k!^3} \pmod{p^{r+1}}.  \label{eq:dis-2}
\end{align}
Although the supercongruence \eqref{eq:dis-1} is much weaker than Swisher's original conjecture (H.3) (the first part of \eqref{eq:h3}),
it is the best result on this conjecture so far.
Besides, the author \cite{Guo-rima} has conjectured that \eqref{eq:dis-2} is true modulo $p^{3r}$.

We have the following different $q$-analogue of \eqref{eq:dis-1} and \eqref{eq:dis-2}.
\begin{theorem}\label{thm:main-3}
Let $n$ be a positive integer with $n\equiv 1\pmod{4}$ and let $r\geqslant 1$.
Then, modulo $\Phi_{n^r}(q)\prod_{j=1}^{r}\Phi_{n^j}(q)$,
\begin{align}
\sum_{k=0}^{(n^r-1)/d}
\frac{(1+q^{4k+1})(q^2;q^4)_k^3}{(1+q)(q^4;q^4)_k^3}q^{k}  &\equiv \dfrac{[n]_{q^2}(q^3;q^4)_{(n^r-1)/2}(q^{5n};q^{4n})_{(n^{r-1}-1)/2}}
{(q^5;q^4)_{(n^r-1)/2} (q^{3n};q^{4n})_{(n^{r-1}-1)/2}}\,q^{(1-n)/2}  \notag\\
&\quad\times\sum_{k=0}^{(n^{r-1}-1)/d}
\frac{(1+q^{(4k+1)n})(q^{2n};q^{4n})_k^3}{(1+q^n)(q^{4n};q^{4n})_k^3}q^{nk}, \label{q-a3}
\end{align}
where $d=1,2$.
\end{theorem}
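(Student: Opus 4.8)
The plan is to exploit the factorization
\[
\Phi_{n^r}(q)\prod_{j=1}^{r}\Phi_{n^j}(q)=\Phi_{n^r}(q)^2\prod_{j=1}^{r-1}\Phi_{n^j}(q)
\]
into pairwise coprime cyclotomic factors: by the Chinese remainder theorem for polynomials it suffices to establish \eqref{q-a3} separately modulo $\Phi_{n^r}(q)^2$ and modulo each $\Phi_{n^j}(q)$ with $1\le j\le r-1$. Write $S(r)$ for the left-hand side of \eqref{q-a3}, $C(r)$ for the prefactor on its right-hand side, and $t_k(q)$ for the summand of $S(r)$. I will take as given the known single-block $q$-supercongruence, call it $(\star)$, valid for every $N\equiv1\pmod 4$:
\[
\sum_{k=0}^{(N-1)/d}t_k(q)\equiv\frac{[N]_{q^2}\,(q^3;q^4)_{(N-1)/2}}{(q^5;q^4)_{(N-1)/2}}\,q^{(1-N)/2}\pmod{\Phi_N(q)^2},
\]
which is exactly the $r=1$ instance of \eqref{q-a3} (a $q$-analogue of \eqref{eq:h2}), and argue by induction on $r$, the base case $r=1$ being $(\star)$ with $N=n$.

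For the factor $\Phi_{n^r}(q)^2$ I would apply $(\star)$ twice. On the left, $(\star)$ with $N=n^r$ produces the closed form carrying $[n^r]_{q^2}$; for the inner sum on the right I apply $(\star)$ in the base $q^n$ with $N=n^{r-1}$, which is admissible modulo $\Phi_{n^r}(q)^2$ since $\Phi_{n^r}(q)\mid\Phi_{n^{r-1}}(q^n)$. Multiplying the latter by $C(r)$ and simplifying, the blocks $(q^{3n};q^{4n})_{(n^{r-1}-1)/2}$ and $(q^{5n};q^{4n})_{(n^{r-1}-1)/2}$ cancel, the $q$-integers collapse through the exact identity $[n]_{q^2}\,[n^{r-1}]_{q^{2n}}=[n^r]_{q^2}$, and the powers of $q$ combine via $(1-n)/2+n(1-n^{r-1})/2=(1-n^r)/2$. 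Both sides thus reduce to $[n^r]_{q^2}(q^3;q^4)_{(n^r-1)/2}(q^5;q^4)_{(n^r-1)/2}^{-1}q^{(1-n^r)/2}$, and the congruence modulo $\Phi_{n^r}(q)^2$ follows as a clean identity with no real difficulty.

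The substance lies in the factors $\Phi_{n^j}(q)$ with $1\le j\le r-1$, and here the $q$-Lucas theorem is the engine. Writing $k=a+b\,n^j$ and counting the zeros of the Pochhammer symbols at a primitive $n^j$-th root of unity, one finds that $(q^4;q^4)_k$ vanishes to order $b$ while the cube $(q^2;q^4)_k^3$ vanishes to order $3b$ for $0\le a\le(n^j-1)/2$ and to order $3(b+1)$ for $(n^j+1)/2\le a\le n^j-1$; hence every term with $b\ge1$, and every term whose residue exceeds $(n^j-1)/2$, is divisible by $\Phi_{n^j}(q)$, so that $S(r)\equiv\sum_{a=0}^{(n^j-1)/2}t_a(q)\pmod{\Phi_{n^j}(q)}$. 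By $(\star)$ at level $n^j$ this block sum is congruent to the closed form $G_j(q):=[n^j]_{q^2}(q^3;q^4)_{(n^j-1)/2}(q^5;q^4)_{(n^j-1)/2}^{-1}q^{(1-n^j)/2}$. The same zero-counting, now in the base $q^n$, reduces the inner sum on the right; combined with the $q$-Lucas evaluation of $C(r)$ modulo $\Phi_{n^j}(q)$ it should again collapse to $G_j(q)$, the induction hypothesis being invoked to handle the bottom layer $j=1$, where $q^n\equiv1$ and the inner sum degenerates to a numerical $q\to1$ truncation.

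The main obstacle is exactly this matching of the two reductions modulo $\Phi_{n^j}(q)$. Note that $G_j(q)$ is a unit modulo $\Phi_{n^j}(q)$ only because the simple zero of $[n^j]_{q^2}$ is cancelled precisely by a simple pole of $(q^5;q^4)_{(n^j-1)/2}^{-1}$, and an identical balance of one zero against one pole is hidden inside $C(r)$. Keeping track of these orders of vanishing, so that nothing spuriously vanishes or blows up and the finite limiting values at the root of unity agree, is where the care is needed and is the step I expect to occupy most of the work.
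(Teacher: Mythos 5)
Your global architecture agrees with the paper's: the CRT splitting into $\Phi_{n^r}(q)^2\prod_{j=1}^{r-1}\Phi_{n^j}(q)$, the use of the known $r=1$ congruence \eqref{eq:mod-phi-cases} to handle the factor $\Phi_{n^r}(q)^2$ (that part of your argument is correct), and the need for a prefactor-matching statement (the paper's Lemma \ref{lem:3}). But the proof breaks at the key reduction modulo $\Phi_{n^j}(q)$ for $1\le j\le r-1$. By your own order count, for $k=a+bn^j$ with $0\le a\le(n^j-1)/2$ the numerator $(q^2;q^4)_k^3$ and the denominator $(q^4;q^4)_k^3$ \emph{both} vanish to order $3b$ at a primitive $n^j$-th root of unity; the orders cancel, so such a summand has a finite, generically nonzero value there. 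Your conclusion that ``every term with $b\ge1$ is divisible by $\Phi_{n^j}(q)$'' is therefore a non sequitur, and it is false, together with the resulting claim $S(r)\equiv\sum_{a=0}^{(n^j-1)/2}t_a(q)\pmod{\Phi_{n^j}(q)}$. What is true — and this is the engine of the paper's proof, the analogue \eqref{eq:diff} of Lemma \ref{lem:2}, proved via the $q$-Lucas theorem and Lemma \ref{lem:1} — is that the block with top digit $b$ contributes the factor $\binom{2b}{b}^3/2^{6b}=(\tfrac12)_b^3/b!^3$ times the truncated single block, so that
\[
\sum_{k=0}^{n^r-1}t_k(q)\equiv G_j(q)\sum_{k=0}^{n^{r-j}-1}\frac{(\frac12)_k^3}{k!^3}\pmod{\Phi_{n^j}(q)},
\]
in your notation $G_j(q)$. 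The classical factor cannot be dropped: for $n=5$, $r=2$, $j=1$ it equals $\sum_{k=0}^{4}(\tfrac12)_k^3/k!^3\equiv 4\pmod 5$, so your claimed reduction $S(2)\equiv\sum_{a=0}^{2}t_a(q)\equiv G_1(q)\pmod{\Phi_5(q)}$ is off by the unit factor $4$.

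The reason your final assertion nonetheless looks consistent is that the same omission is made on both sides: the correct reduction of the inner sum in base $q^n$ carries the \emph{identical} factor $\sum_{k=0}^{n^{r-j}-1}(\tfrac12)_k^3/k!^3$ (now modulo $\Phi_{n^{j-1}}(q^n)$, which is divisible by $\Phi_{n^j}(q)$), and in the paper's argument these two equal classical factors are matched against each other — no term-by-term vanishing, and no invertibility of that sum, is ever invoked — after which Lemma \ref{lem:3} converts the level-$j$ prefactor into $C(r)$. So the missing ingredient is precisely the Dwork-type block congruence \eqref{eq:diff} and its base-$q^n$ version; your zero-counting is correct only for the digits $a>(n^j-1)/2$, and that correct part is exactly what the paper uses for the extra step in the $d=2$ case, namely extending the truncated sums over the genuinely vanishing terms so as to work with complete blocks.
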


Note that the author and Zudilin \cite[Conjecture 4.3]{GuoZu4} ever conjectured \eqref{q-a3} also holds modulo $\prod_{j=1}^{r}\Phi_{n^j}(q)^2$.

The paper is organized as follows. We shall prove Theorems \ref{thm:main-1}--\ref{thm:main-3} in Sections 2--4, respectively.
In Section 5, we shall prove three more Dwork-type $q$-congruences, which were previously conjectured by the author and Zudilin \cite{GuoZu4}.
Finally, in Section 6, we put forward some related conjectures on $q$-supercongruences, most of which (if true) can be utilized to
confirm the corresponding complicated conjectures in \cite{GuoZu4}.

\section{Proof of Theorem \ref{thm:main-1}}

The {\em $q$-binomial coefficient} ${M\brack N}$ can be defined by
\[
{M\brack N}={M\brack N}_q=
\begin{cases}
\displaystyle\frac{(q;q)_M}{(q;q)_N (q;q)_{M-N}}, &\text{if $0\leq N\leq M$},\\[5pt]
0,&\text{otherwise.}
\end{cases}
\]
The so-called {\em $q$-Lucas theorem} (see
Olive \cite{Olive} and D\'esarm\'enien \cite[Proposition 2.2]{De}) can be stated as follows:
Let $n$ be a positive integer, and let $a,b,r,s$  be nonnegative integers with $b,s\leq n-1$. Then
$$
{an+b\brack rn+s}\equiv {a\choose r}{b\brack s} \pmod{\Phi_n(q)}.
$$

In order to prove Theorem \ref{thm:main-1}, we need the following two lemmas. The proof of the first one is easy and can be found  in
\cite[Lemma 3.1]{GW}.

\begin{lemma}\label{lem:1}
Let $n$ be a positive odd integer. Let $r$ and $s$ be nonnegative integers with $s\leqslant n-1$.
Then
$$
(-q;q)_{rn+s}\equiv 2^r (-q;q)_s \pmod{\Phi_n(q)}.
$$
\end{lemma}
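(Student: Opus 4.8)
The plan is to work directly with the product form of the $q$-shifted factorial and to exploit the defining divisibility $\Phi_n(q)\mid q^n-1$, so that $q^n\equiv 1\pmod{\Phi_n(q)}$. Writing $(-q;q)_m=\prod_{k=1}^m(1+q^k)$, I would first split the range $1\leqslant k\leqslant rn+s$ into $r$ consecutive blocks of length $n$ followed by a tail of length $s$:
$$
(-q;q)_{rn+s}=\Bigl(\prod_{j=0}^{r-1}\prod_{i=1}^{n}(1+q^{jn+i})\Bigr)\prod_{i=1}^{s}(1+q^{rn+i}).
$$
Since $q^{jn}=(q^n)^j\equiv 1\pmod{\Phi_n(q)}$ for every $j$, each factor $1+q^{jn+i}$ is congruent to $1+q^{i}$ modulo $\Phi_n(q)$; likewise, using $q^{rn}\equiv 1$, the tail is congruent to $\prod_{i=1}^{s}(1+q^i)=(-q;q)_s$. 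Thus the whole expression reduces, modulo $\Phi_n(q)$, to $\bigl(\prod_{i=1}^{n}(1+q^i)\bigr)^r(-q;q)_s$, and everything then hinges on evaluating the single block $\prod_{i=1}^{n}(1+q^i)$ modulo $\Phi_n(q)$.

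The key step — and essentially the only place where any computation occurs — is to show that $\prod_{i=1}^{n}(1+q^i)\equiv 2\pmod{\Phi_n(q)}$. I would establish this by evaluating at a primitive $n$-th root of unity $\zeta$, i.e.\ a root of $\Phi_n(q)$: since $\zeta^n=1$, the powers $\zeta^0,\zeta^1,\ldots,\zeta^{n-1}$ run over all $n$-th roots of unity, so $\prod_{i=0}^{n-1}(x-\zeta^i)=x^n-1$. Setting $x=-1$ and using that $n$ is odd gives $\prod_{i=0}^{n-1}(-1-\zeta^i)=(-1)^n-1=-2$, hence $\prod_{i=0}^{n-1}(1+\zeta^i)=2$. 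Because $\zeta^n=\zeta^0=1$, this is the same as $\prod_{i=1}^{n}(1+\zeta^i)=2$. As this holds for every root $\zeta$ of $\Phi_n(q)$, and $\Phi_n(q)$ has simple roots, the polynomial $\prod_{i=1}^{n}(1+q^i)-2$ is divisible by $\Phi_n(q)$, which is exactly the claimed congruence.

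Combining the two steps yields $(-q;q)_{rn+s}\equiv 2^r(-q;q)_s\pmod{\Phi_n(q)}$. I do not anticipate a genuine obstacle: the argument is elementary, and the oddness of $n$ enters precisely to make $(-1)^n-1=-2$ rather than $0$, which is what produces the factor $2$ per block. The only point requiring mild care is the passage from ``equal at every root of $\Phi_n$'' to ``congruent modulo $\Phi_n$'', which is immediate since $\Phi_n(q)$ is separable; alternatively one can sidestep roots of unity entirely by writing $1+q^i=(1-q^{2i})/(1-q^i)$ and manipulating $(q^2;q^2)_n/(q;q)_n$ directly, but the root-of-unity evaluation is the cleanest route.
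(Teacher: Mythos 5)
Your proof is correct, and it is self-contained. Note that the paper itself does not prove this lemma at all: it simply remarks that the proof ``is easy and can be found in \cite[Lemma 3.1]{GW}'', so there is no in-paper argument to compare against. Your route --- splitting $(-q;q)_{rn+s}$ into $r$ blocks of length $n$ plus a tail, reducing each factor via $q^n\equiv 1\pmod{\Phi_n(q)}$, and then evaluating the single block $\prod_{i=1}^{n}(1+q^i)$ at a primitive $n$-th root of unity via $\prod_{i=0}^{n-1}(x-\zeta^i)=x^n-1$ at $x=-1$ --- is the standard and arguably cleanest proof of this fact. All the steps check out: the oddness of $n$ is used exactly where you say (to get $(-1)^n-1=-2$ rather than $0$), the identification $\prod_{i=1}^{n}(1+\zeta^i)=\prod_{i=0}^{n-1}(1+\zeta^i)$ is legitimate since $\zeta^n=\zeta^0$, and the passage from vanishing at every root of $\Phi_n(q)$ to divisibility by $\Phi_n(q)$ in $\mathbb{Z}[q]$ is justified by separability of $\Phi_n(q)$ together with its being monic with integer coefficients. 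One minor observation: your argument never actually needs the hypothesis $s\leqslant n-1$; the congruence holds for all $s\geqslant 0$, the restriction being relevant only for the unique decomposition of an index in the paper's applications.
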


\begin{lemma}\label{lem:2}
Let $m$ and $n$ be positive integers with $n\equiv 1\pmod{4}$.
Then, modulo $\Phi_n(q)$,
\begin{align}
\sum_{k=0}^{mn-1}\frac{(q;q^2)_k^2(q^2;q^4)_k}{(q^2;q^2)_k^2(q^4;q^4)_k}q^{2k}
&\equiv [n]\frac{(q^3;q^4)_{(n-1)/2}}{(q^5;q^4)_{(n-1)/2}}\sum_{k=0}^{m-1}\frac{(\frac{1}{2})_k^3}{k!^3}.  \label{eq:lem-1}
\end{align}
\end{lemma}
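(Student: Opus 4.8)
The plan is to collapse the $mn$-term sum to a single block of length $n$ multiplied by a classical sum, via the $q$-Lucas theorem. First I would rewrite the summand $c_k:=\frac{(q;q^2)_k^2(q^2;q^4)_k}{(q^2;q^2)_k^2(q^4;q^4)_k}q^{2k}$ in a shape suited to $q$-Lucas. From $(q;q)_{2k}=(q;q^2)_k(q^2;q^2)_k$ and $(q^2;q^2)_k=(q;q)_k(-q;q)_k$ one obtains $\frac{(q;q^2)_k}{(q^2;q^2)_k}=\frac{1}{(-q;q)_k^2}{2k\brack k}_q$, and replacing $q$ by $q^2$ gives the analogous identity for $\frac{(q^2;q^4)_k}{(q^4;q^4)_k}$. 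Hence
\[
c_k=\frac{{2k\brack k}_q^{2}\,{2k\brack k}_{q^2}}{(-q;q)_k^{4}(-q^2;q^2)_k^{2}}\,q^{2k}.
\]
I would then write $k=jn+i$ with $0\le i\le n-1$ and $0\le j\le m-1$, a bijection onto $0\le k\le mn-1$, so that the sum splits into $m$ blocks.

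The heart of the argument is to evaluate $c_{jn+i}$ modulo $\Phi_n(q)$ by applying the $q$-Lucas theorem to the three $q$-binomial coefficients and Lemma \ref{lem:1} to the factors $(-q;q)_k$, $(-q^2;q^2)_k$. The one delicate point is the carry in $2k=2jn+2i$: the decomposition $2k=(2j)n+2i$ is admissible for $q$-Lucas only when $2i\le n-1$. When $(n+1)/2\le i\le n-1$ one has instead $2k=(2j+1)n+(2i-n)$ with $0\le 2i-n<i$, whence ${2k\brack k}_q\equiv\binom{2j+1}{j}{2i-n\brack i}_q\equiv0$ and the whole term $c_{jn+i}$ vanishes modulo $\Phi_n(q)$. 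For $0\le i\le(n-1)/2$ the theorem yields ${2k\brack k}_q\equiv\binom{2j}{j}{2i\brack i}_q$ and, after passing from $\Phi_n(q^2)$ to $\Phi_n(q)$ (legitimate as $\Phi_n(q)\mid\Phi_n(q^2)$ for $n$ odd), ${2k\brack k}_{q^2}\equiv\binom{2j}{j}{2i\brack i}_{q^2}$; Lemma \ref{lem:1} gives $(-q;q)_k\equiv2^{j}(-q;q)_i$ and $(-q^2;q^2)_k\equiv2^{j}(-q^2;q^2)_i$; and $q^{2k}\equiv q^{2i}$ because $q^{n}\equiv1$. Assembling these factors and using $\binom{2j}{j}/4^{j}=(\tfrac12)_j/j!$ produces the key factorization
\[
c_{jn+i}\equiv\frac{(\tfrac12)_j^{3}}{j!^{3}}\,c_i\pmod{\Phi_n(q)},
\]
valid for every $0\le i\le n-1$ (both sides being $0$ when $i>(n-1)/2$).

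Summing over $i$ and $j$ then gives
\[
\sum_{k=0}^{mn-1}c_k\equiv\Bigl(\sum_{j=0}^{m-1}\frac{(\tfrac12)_j^{3}}{j!^{3}}\Bigr)\Bigl(\sum_{i=0}^{(n-1)/2}c_i\Bigr)\pmod{\Phi_n(q)},
\]
so it remains to match the block sum $\sum_{i=0}^{(n-1)/2}c_i$ with the factor $[n]\frac{(q^3;q^4)_{(n-1)/2}}{(q^5;q^4)_{(n-1)/2}}$. Here I would invoke the known $q$-supercongruence \eqref{eq:mod-phi}, which for $n\equiv1\pmod4$ evaluates this block sum as $\frac{(q^2;q^4)_{(n-1)/4}^{2}}{(q^4;q^4)_{(n-1)/4}^{2}}q^{(n-1)/2}$ modulo $\Phi_n(q)^2$, hence modulo $\Phi_n(q)$. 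The subtle—and in my view main—obstacle is that the stated right-hand side is only apparently zero: although $[n]\equiv0\pmod{\Phi_n(q)}$ for $n>1$, the denominator $(q^5;q^4)_{(n-1)/2}$ contains exactly one vanishing factor $1-q^{n}$, arising from the exponent $5+4\cdot\frac{n-5}{4}=n$, which occurs precisely because $n\equiv1\pmod4$ (whereas the numerator $(q^3;q^4)_{(n-1)/2}$ contains no such factor). Cancelling this $1-q^{n}$ against the $1-q^{n}$ in $[n]=\frac{1-q^{n}}{1-q}$ leaves a genuine unit, and a direct reindexing of the remaining cyclotomic factors—reducing every exponent modulo $n$ by $q^{n}\equiv1$—identifies it with $\frac{(q^2;q^4)_{(n-1)/4}^{2}}{(q^4;q^4)_{(n-1)/4}^{2}}q^{(n-1)/2}$, completing the proof. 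I expect the $q$-Lucas factorization to be routine once the carry is handled, so that tracking this single $0/0$ cancellation and the reindexing is where the real care is needed.
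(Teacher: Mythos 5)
Your proposal is correct and takes essentially the same route as the paper's proof: the same rewriting of the summand via central $q$-binomial coefficients, the same block decomposition handled by the $q$-Lucas theorem (with the carry, which the paper treats implicitly) and Lemma \ref{lem:1}, and the same final appeal to the $n\equiv 1\pmod 4$ case of \eqref{eq:mod-phi}. The only difference is cosmetic: the matching of the block sum with $[n](q^3;q^4)_{(n-1)/2}/(q^5;q^4)_{(n-1)/2}$, which you verify by cancelling the $1-q^n$ factors and reindexing exponents modulo $n$, is exactly the congruence \eqref{eq:equiv} that the paper records as ``easily checked.''
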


\begin{proof}

It is easy to see that
$$
\frac{(q;q^2)_k}{(q^2;q^2)_k}
=\frac{1}{(-q;q)_k^2}{2k\brack k}.
$$
Thus, the left-hand side of \eqref{eq:lem-1} can be written as
\begin{align*}
\sum_{k=0}^{mn-1}\frac{(q;q^2)_k^2(q^2;q^4)_k}{(q^2;q^2)_k^2(q^4;q^4)_k}q^{2k}
&=\sum_{k=0}^{mn-1}\frac{q^{2k}}{(-q;q)_k^4(-q^2;q^2)_k^2}{2k\brack k}^2{2k\brack k}_{q^2} \\
&=\sum_{r=0}^{m-1}\sum_{s=0}^{n-1} \frac{q^{2rn+2s}}{(-q;q)_{rn+s}^4(-q^2;q^2)_{rn+s}^2}{2rn+2s\brack rn+s}^2{2rn+2s\brack rn+s}_{q^2}
\end{align*}

For odd $n$, we have $\Phi_n(q^2)=\Phi_n(q)\Phi_n(-q)$. By the $q$-Lucas theorem and Lemma \ref{lem:1}, we get
\begin{align*}
&\sum_{s=0}^{n-1} \frac{q^{2rn+2s}}{(-q;q)_{rn+s}^4(-q^2;q^2)_{rn+s}^2}{2rn+2s\brack rn+s}^2{2rn+2s\brack rn+s}_{q^2} \\
&\quad \equiv\frac{{2r\choose r}^3}{2^{6r}}\sum_{s=0}^{n-1} \frac{q^{2s}}{(-q;q)_{s}^4(-q^2;q^2)_{s}^2}{2s\brack s}^2{2s\brack s}_{q^2} \\
&\quad \equiv \frac{{2r\choose r}^3}{2^{6r}}\sum_{s=0}^{(n-1)/2}\frac{(q;q^2)_s^2(q^2;q^4)_s}{(q^2;q^2)_s^2(q^4;q^4)_s}q^{2s} \pmod{\Phi_n(q)}.
\end{align*}
The proof then follows from the $n\equiv 1$ case of \eqref{eq:mod-phi} and the easily checked $q$-congruence:
\begin{align}
\dfrac{(q^2;q^4)_{(n-1)/4}^2}{(q^4;q^4)_{(n-1)/4}^2}q^{(n-1)/2}
&\equiv \dfrac{(q^{2-n},q^{2+n};q^4)_{(n-1)/4}}{(q^{4-n},q^{4+n};q^4)_{(n-1)/4}}q^{(n-1)/2} \notag\\
&=[n]\frac{(q^3;q^4)_{(n-1)/2}}{(q^5;q^4)_{(n-1)/2}} \pmod{\Phi_n(q)^2}.  \label{eq:equiv}
\end{align}
for $n\equiv 1\pmod{4}$.
\end{proof}

\begin{lemma}\label{lem:3}
Let $n\equiv 1\pmod{4}$ be an integer greater than $1$ and let $r,s$ be positive integers with $r>s$. Then, modulo $\Phi_{n^s}(q)$,
\begin{align}
[n]\frac{(q^3;q^4)_{(n^r-1)/2} (q^{5n};q^{4n})_{(n^{r-1}-1)/2} }{(q^5;q^4)_{(n^r-1)/2} (q^{3n};q^{4n})_{(n^{r-1}-1)/2}}
\equiv [n]\frac{(q^3;q^4)_{(n^s-1)/2} (q^{5n};q^{4n})_{(n^{s-1}-1)/2} }{(q^5;q^4)_{(n^s-1)/2} (q^{3n};q^{4n})_{(n^{s-1}-1)/2}}. \label{eq:lem3}
\end{align}
\end{lemma}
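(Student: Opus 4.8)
The plan is to collapse both sides of \eqref{eq:lem3} into a single ``generating ratio'' evaluated in the variable $q$ and in the variable $q^n$, and to show that each of these reduces, modulo $\Phi_{n^s}(q)$, to one and the same rational constant. Set $G(\rho)=\dfrac{(q^3;q^4)_{(n^\rho-1)/2}}{(q^5;q^4)_{(n^\rho-1)/2}}$. Since $\dfrac{(q^{5n};q^{4n})_{N}}{(q^{3n};q^{4n})_{N}}=1\big/G(\rho-1)\big|_{q\to q^n}$ with $N=(n^{\rho-1}-1)/2$, the left-hand side of \eqref{eq:lem3} equals $[n]\,G(r)\big/G(r-1)\big|_{q\to q^n}$ and the right-hand side equals $[n]\,G(s)\big/G(s-1)\big|_{q\to q^n}$. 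As the factor $[n]$ is common, it cancels identically and
\[
\frac{\text{LHS of }\eqref{eq:lem3}}{\text{RHS of }\eqref{eq:lem3}}
=\frac{G(r)/G(s)}{\big(G(r-1)/G(s-1)\big)\big|_{q\to q^n}} ,
\]
so it suffices to prove that numerator and denominator on the right are congruent modulo $\Phi_{n^s}(q)$ and that the denominator is a unit there.

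First I would analyse $G(r)/G(s)$. Its numerator $(q^3;q^4)_{(n^r-1)/2}/(q^3;q^4)_{(n^s-1)/2}$ is the product of the factors $1-q^m$ over $m\equiv 3\pmod 4$ with $2n^s+1\le m\le 2n^r-3$, and its denominator runs over $m\equiv 1\pmod 4$ with $2n^s+3\le m\le 2n^r-1$; each range contains exactly $n^s\cdot\frac{n^{r-s}-1}{2}$ integers. Modulo $\Phi_{n^s}(q)$ we have $q^{n^s}\equiv 1$, so $1-q^m$ depends only on $m\bmod n^s$, and since $\gcd(4,n^s)=1$ any $n^s$ consecutive terms of such a progression run through every residue class mod $n^s$ exactly once. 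Grouping the factors into $\frac{n^{r-s}-1}{2}$ blocks, the non-vanishing factors of each block contribute $\prod_{c=1}^{n^s-1}(1-q^c)\equiv n^s$, while the unique vanishing factor $1-q^{n^s t}$ (the one with $n^s\mid m$) is regularised through $1-q^{n^s t}=(1-q^{n^s})\sum_{i=0}^{t-1}q^{n^s i}\equiv t\,(1-q^{n^s})$. Because numerator and denominator each contain the \emph{same} number $\frac{n^{r-s}-1}{2}$ of vanishing factors, the powers of $n^s$ and of $1-q^{n^s}$ cancel in the ratio, showing that $G(r)/G(s)$ is a unit at $\Phi_{n^s}(q)$ and that
\[
\frac{G(r)}{G(s)}\equiv C(r-s):=
\frac{\prod_{\,t\equiv 3(4),\ 3\le t\le 2n^{r-s}-3}t}
{\prod_{\,t\equiv 1(4),\ 5\le t\le 2n^{r-s}-1}t}\pmod{\Phi_{n^s}(q)} .
\]

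Next I would run the identical computation on $\big(G(r-1)/G(s-1)\big)\big|_{q\to q^n}$. Here the factors are $1-q^{n\widetilde m}$, which vanish modulo $\Phi_{n^s}(q)$ exactly when $n^{s-1}\mid\widetilde m$, because $\Phi_{n^s}(q)\mid\Phi_{n^{s-1}}(q^n)$ (the element $q^n$ is a primitive $n^{s-1}$-th root of unity whenever $q$ is a primitive $n^s$-th one). Thus the same block argument applies with $n^{s-1}$ in place of $n^s$ and with gap $(r-1)-(s-1)=r-s$; the index sets of the surviving vanishing factors are once more $\{t\equiv 3\!\!\pmod 4:3\le t\le 2n^{r-s}-3\}$ and $\{t\equiv 1\!\!\pmod 4:5\le t\le 2n^{r-s}-1\}$, so this ratio too is a unit at $\Phi_{n^s}(q)$ and is congruent to the very same constant $C(r-s)$. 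Consequently the numerator and denominator of the displayed quotient are both $\equiv C(r-s)$, hence congruent to each other modulo $\Phi_{n^s}(q)$; since the denominator is a unit there, dividing yields that the quotient is $\equiv 1$, i.e.\ $R(r)\equiv R(s)$, which is \eqref{eq:lem3}. The degenerate case $s=1$ is covered by reading $n^{0}=1$ and $G(0)=1$, the $q\to q^n$ factor then contributing the plain $q^n\to 1$ limit $\prod\widetilde m$; one checks $R(\rho)$ is still a unit because the pole of $G(\rho)$ at $\Phi_n(q)$ is cancelled by the zero of $[n]$.

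The step I expect to be the main obstacle is the regularisation of the products that vanish identically modulo $\Phi_{n^s}(q)$. One must verify that the vanishing factors occur with equal multiplicity in numerator and denominator, so that $G(r)/G(s)$ is a genuine unit rather than acquiring a pole or zero, and that their ``leading coefficients'' $t$ assemble into the stated integer ratio $C(r-s)$; this is where the exponent bookkeeping modulo $n^s$ (and modulo $n^{s-1}$ for the $q^n$-part) has to be done carefully. Everything else—the block decomposition and the evaluation $\prod_{c=1}^{n^s-1}(1-q^c)\equiv n^s$ at a primitive $n^s$-th root of unity—is routine, and the entire argument hinges on the coincidence of the two constants $C(r-s)$, which is forced by the common gap $r-s=(r-1)-(s-1)$ between the two levels.
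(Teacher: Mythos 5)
Your argument is essentially correct in its core computation, and it takes a genuinely different route from the paper's. Writing $G(\rho)=(q^3;q^4)_{(n^\rho-1)/2}/(q^5;q^4)_{(n^\rho-1)/2}$ and $R(\rho)=[n]\,G(\rho)\big/G(\rho-1)\big|_{q\to q^n}$, so that \eqref{eq:lem3} reads $R(r)\equiv R(s)$, the paper proceeds by rewriting $R(\rho)$ through the identity behind \eqref{eq:prod-mod}, replacing $(1-q^{m-n^r})(1-q^{m+n^r})$ by $(1-q^m)^2$ modulo $\Phi_{n^s}(q)^2$, splitting off the level-$s$ part, and killing the remaining tail quotient by an exponent-reversal symmetry. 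You instead work directly at a primitive $n^s$-th root of unity: you partition the factor lists of $G(r)/G(s)$ and of $\bigl(G(r-1)/G(s-1)\bigr)\big|_{q\to q^n}$ into blocks forming complete residue systems modulo $n^s$ (respectively $n^{s-1}$), use $\prod_{c=1}^{N-1}(1-\zeta^c)=N$, regularize each vanishing factor via the exact factorization $1-q^{n^st}=(1-q^{n^s})\sum_{i=0}^{t-1}q^{n^si}$, and observe that the two surviving index sets coincide (here $n\equiv1\pmod 4$ is what makes $n^st\equiv t\pmod 4$), so both ratios reduce to one explicit constant $C(r-s)$. The index bookkeeping is right, and this is more elementary and more explicit than the paper's proof; on the other hand, the paper's intermediate steps live partly modulo $\Phi_{n^s}(q)^2$ and are what feed the strengthening proposed in Conjectures \ref{conj:3} and \ref{conj:4}, whereas evaluation at roots of unity is intrinsically capped at the first power of $\Phi_{n^s}(q)$.

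There is, however, one genuine logical gap, at the very last step. From $R(r)/R(s)\equiv1\pmod{\Phi_{n^s}(q)}$ one cannot conclude $R(r)\equiv R(s)$ unless one also knows that $R(s)$ (equivalently $R(r)$) is $\Phi_{n^s}(q)$-regular, i.e.\ that $\Phi_{n^s}(q)$ does not divide its reduced denominator: for instance $A=1/\Phi_{n^s}(q)+1$ and $B=1/\Phi_{n^s}(q)$ satisfy $A/B\equiv 1$ but $A-B=1\not\equiv0$. This is a live issue in your setup, because $G(s)$ really does have a simple pole at $\Phi_{n^s}(q)$: since $n^s\equiv1\pmod4$, the factor $1-q^{n^s}$ occurs in $(q^5;q^4)_{(n^s-1)/2}$. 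For $s\geqslant 2$ this pole is cancelled inside $R(s)$ by the factor $1-q^{n\cdot n^{s-1}}=1-q^{n^s}$ of $(q^{5n};q^{4n})_{(n^{s-1}-1)/2}$, which sits in the numerator of $R(s)$ (and the analogous cancellation makes $R(r)$ regular as well), so the needed fact is true; but you verify unit-ness only in the degenerate case $s=1$, where you invoke the zero of $[n]$, and your stated reduction (``numerator and denominator congruent and the denominator a unit'') is not sufficient as it stands. The fix is a one-factor check entirely within your own counting machinery; once $R(s)$ is verified to be $\Phi_{n^s}(q)$-regular for all $s\geqslant1$, your proof is complete.
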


\begin{proof}For $n\equiv 1\pmod{4}$, we have
\begin{align*}
[n]\frac{(q^3;q^4)_{(n-1)/2}}{(q^5;q^4)_{(n-1)/2}}
=\frac{(q^{2-n},q^{2+n};q^4)_{(n-1)/4}}{(q^{4-n},q^{4+n};q^4)_{(n-1)/4}}q^{(n-1)/2},
\end{align*}
and so
\begin{align}
&[n]\frac{(q^3;q^4)_{(n^r-1)/2} (q^{5n};q^{4n})_{(n^{r-1}-1)/2} }{(q^5;q^4)_{(n^r-1)/2} (q^{3n};q^{4n})_{(n^{r-1}-1)/2}} \notag\\
&\quad =\frac{(q^{2-n^r},q^{2+n^r};q^4)_{(n^r-1)/4} (q^{4n-n^r},q^{4n+n^r};q^{4n})_{(n^{r-1}-1)/4} }
{(q^{4-n^r},q^{4+n^r};q^4)_{(n^r-1)/4} (q^{2n-n^r},q^{2n+n^r};q^{4n})_{(n^{r-1}-1)/4}}q^{(n-1)/2}.  \label{eq:prod-mod}
\end{align}
Note that $(q^{2n-n^r},q^{2n+n^r};q^{4n})_{(n^{r-1}-1)/4}$ (respectively, $(q^{4n-n^r},q^{4n+n^r};q^{4n})_{(n^{r-1}-1)/4}$)
is the product of all the factors of the form $1-q^{an}$ in $(q^{2-n^r},q^{2+n^r};q^4)_{(n^r-1)/4}$ (respectively, $(q^{4-n^r},q^{4+n^r};q^4)_{(n^r-1)/4}$).
Using the following easily checked $q$-congruence:
$$
(1-q^{m-n^r})(1-q^{m+n^r})\equiv (1-q^m)^2 \pmod{\Phi_{N}(q)^2},
$$
where $N$ divides $n^r$, we see that, modulo $\Phi_{n^s}(q)^2$, the right-hand side of \eqref{eq:prod-mod} is congruent to
\begin{align}
&\frac{(q^2;q^4)_{(n^r-1)/4}^2 (q^{4n};q^{4n})_{(n^{r-1}-1)/4}^2 }
{(q^4;q^4)_{(n^r-1)/4}^2 (q^{2n};q^{4n})_{(n^{r-1}-1)/4}^2} q^{(n-1)/2} \notag \\
&\quad
=\frac{(q^2;q^4)_{(n^s-1)/4}^2 (q^{4n};q^{4n})_{(n^{s-1}-1)/4}^2 }
{(q^4;q^4)_{(n^s-1)/4}^2 (q^{2n};q^{4n})_{(n^{s-1}-1)/4}^2}
\frac{(q^{n^s+1};q^4)_{(n^r-n^s)/4}^2 (q^{n^s+3n};q^{4n})_{(n^{r-1}-n^{s-1})/4}^2 }
{(q^{n^s+3};q^4)_{(n^r-n^s)/4}^2 (q^{n^s+n};q^{4n})_{(n^{r-1}-n^{s-1})/4}^2}q^{(n-1)/2}.  \label{eq:pf-lem}
\end{align}

Furthermore, the polynomial $(q^{n^s+1};q^4)_{(n^r-n^s)/4}$ is divisible by $(q^{n^s+n};q^{4n})_{(n^{r-1}-n^{s-1})/4}$, and the quotient
\begin{align*}
\frac{(q^{n^s+1};q^4)_{(n^r-n^s)/4} }
{(q^{n^s+n};q^{4n})_{(n^{r-1}-n^{s-1})/4}}
&=\frac{(1-q^{n^s+1})(1-q^{n^s+5})\cdots (1-q^{n^r-3})}{(1-q^{n^s+n})(1-q^{n^s+5n})\cdots (1-q^{n^r-3n})} \\
&\equiv\frac{(1-q^{1-n^r})(1-q^{5-n^r})\cdots (1-q^{-n^s-3})}{(1-q^{n-n^r})(1-q^{5n-n^r})\cdots (1-q^{-n^s-3n})} \\
&=\frac{(q^{n^s+3};q^4)_{(n^r-n^s)/4} }
{(q^{n^s+3n};q^{4n})_{(n^{r-1}-n^{s-1})/4}} q^{(n^r-n^s)(n^{r-1}+n^{s-1}-n^r-n^s)/8} \\
&\equiv \frac{(q^{n^s+3};q^4)_{(n^r-n^s)/4} }{(q^{n^s+3n};q^{4n})_{(n^{r-1}-n^{s-1})/4}}\not\equiv 0  \pmod{\Phi_{n^s}(q)}.
\end{align*}
This means that the right-hand side of \eqref{eq:pf-lem} reduces to
\begin{align*}
&\frac{(q^2;q^4)_{(n^s-1)/4}^2 (q^{4n};q^{4n})_{(n^{s-1}-1)/4}^2}
{(q^4;q^4)_{(n^s-1)/4}^2 (q^{2n};q^{4n})_{(n^{s-1}-1)/4}^2} q^{(n-1)/2} \\
&\equiv \frac{(q^{2-n^s},q^{2+n^s};q^4)_{(n^s-1)/4} (q^{4n-n^s},q^{4n+n^s};q^{4n})_{(n^{s-1}-1)/4} }
{(q^{4-n^s},q^{4+n^s};q^4)_{(n^s-1)/4} (q^{2n-n^s},q^{2n+n^s};q^{4n})_{(n^{s-1}-1)/4}} q^{(n-1)/2} \\
&=[n]\frac{(q^3;q^4)_{(n^s-1)/2} (q^{5n};q^{4n})_{(n^{s-1}-1)/2} }{(q^5;q^4)_{(n^s-1)/2} (q^{3n};q^{4n})_{(n^{s-1}-1)/2}} \pmod{\Phi_{n^s}(q)},
\end{align*}
as desired.
\end{proof}

Now we can prove Theorem \ref{thm:main-1}.

\begin{proof}[Proof of Theorem \ref{thm:main-1}]
By \eqref{eq:mod-phi} and \eqref{eq:equiv}, we obtain
\begin{align*}
\sum_{k=0}^{(n^r-1)/d}\frac{(q;q^2)_k^2(q^2;q^4)_k}{(q^2;q^2)_k^2(q^4;q^4)_k}q^{2k}
&\equiv [n^r]\frac{(q^3;q^4)_{(n^r-1)/2}}{(q^5;q^4)_{(n^r-1)/2}}  \pmod{\Phi_{n^r}(q)^2}, \\
\sum_{k=0}^{(n^{r-1}-1)/d}\frac{(q^n;q^{2n})_k^2(q^{2n};q^{4n})_k}{(q^{2n};q^{2n})_k^2(q^{4n};q^{4n})_k}q^{2nk}
&\equiv [n^{r-1}]_{q^n}\frac{(q^{3n};q^{4n})_{(n^{r-1}-1)/2}}{(q^{5n};q^{4n})_{(n^{r-1}-1)/2}}  \pmod{\Phi_{n^{r-1}}(q^n)^2},
\end{align*}
where $d=1,2$.
Since $\Phi_{n^{r-1}}(q^n)$ is divisible by $\Phi_{n^r}(q)$, from the above two $q$-congruences we conclude that \eqref{eq:main-1} holds modulo $\Phi_{n^r}(q)^2$.

By Lemma \ref{lem:2}, for $1\leqslant j\leqslant r-1$, there hold
\begin{align*}
&\sum_{k=0}^{n^r-1}\frac{(q;q^2)_k^2(q^2;q^4)_k}{(q^2;q^2)_k^2(q^4;q^4)_k}q^{2k}
\equiv [n^j]\frac{(q^3;q^4)_{(n^j-1)/2}}{(q^5;q^4)_{(n^j-1)/2}}\sum_{k=0}^{n^{r-j}-1}\frac{(\frac{1}{2})_k^3}{k!^3} \pmod{\Phi_{n^j}(q)}, \\
&\sum_{k=0}^{n^{r-1}-1}\frac{(q^n;q^{2n})_k^2(q^{2n};q^{4n})_k}{(q^{2n};q^{2n})_k^2(q^{4n};q^{4n})_k}q^{2nk} \\
&\equiv [n^{j-1}]_{q^n}\frac{(q^{3n};q^{4n})_{(n^{j-1}-1)/2}}{(q^{5n};q^{4n})_{(n^{j-1}-1)/2}}\sum_{k=0}^{n^{r-j}-1}\frac{(\frac{1}{2})_k^3}{k!^3} \pmod{\Phi_{n^{j-1}}(q^n)}.
\end{align*}
It follows that
\begin{align*}
\sum_{k=0}^{n^r-1}\frac{(q;q^2)_k^2(q^2;q^4)_k}{(q^2;q^2)_k^2(q^4;q^4)_k}q^{2k}
&\equiv [n]\frac{(q^3;q^4)_{(n^j-1)/2} (q^{5n};q^{4n})_{(n^{j-1}-1)/2} }{(q^5;q^4)_{(n^j-1)/2} (q^{3n};q^{4n})_{(n^{j-1}-1)/2}} \notag\\[5pt]
&\quad\times\sum_{k=0}^{n^{r-1}-1}\frac{(q^n;q^{2n})_k^2(q^{2n};q^{4n})_k}{(q^{2n};q^{2n})_k^2(q^{4n};q^{4n})_k}q^{2nk} \pmod{\Phi_{n^j}(q)}.
\end{align*}
In view of Lemma \ref{lem:3}, we see that the $d=1$ case of \eqref{eq:main-1} holds modulo $\Phi_{n^j}(q)$ for $1\leqslant j\leqslant r-1$.
Since $\Phi_n(q), \Phi_{n^2}(q),\ldots,\Phi_{n^{r-1}}(q), \Phi_{n^r}(q)^2$ are pairwise coprime polynomials in $q$, we complete the proof of the $d=1$ case of \eqref{eq:main-1}.

Note that the $k$-th summand on the left-hand side of \eqref{eq:main-1} is congruent to $0$ modulo $\Phi_{n^j}(q)$ for $k$ in the range $(n^r-1)/2<k\leqslant n^j(n^{r-j}+1)/2$.
By Lemma \ref{lem:2} again, for $1\leqslant j\leqslant r-1$, there hold
\begin{align*}
\sum_{k=0}^{(n^r-1)/2}\frac{(q;q^2)_k^2(q^2;q^4)_k}{(q^2;q^2)_k^2(q^4;q^4)_k}q^{2k}
&\equiv\sum_{k=0}^{n^j(n^{r-j}+1)/2}\frac{(q;q^2)_k^2(q^2;q^4)_k}{(q^2;q^2)_k^2(q^4;q^4)_k}q^{2k} \\
&\equiv [n^j]\frac{(q^3;q^4)_{(n^j-1)/2}}{(q^5;q^4)_{(n^j-1)/2}}\sum_{k=0}^{(n^{r-j}+1)/2}\frac{(\frac{1}{2})_k^3}{k!^3} \pmod{\Phi_{n^j}(q)},
\end{align*}
and
\begin{align*}
&\sum_{k=0}^{(n^{r-1}-1)/2}\frac{(q^n;q^{2n})_k^2(q^{2n};q^{4n})_k}{(q^{2n};q^{2n})_k^2(q^{4n};q^{4n})_k}q^{2nk} \\
&\equiv [n^{j-1}]_{q^n}\frac{(q^{3n};q^{4n})_{(n^{j-1}-1)/2}}{(q^{5n};q^{4n})_{(n^{j-1}-1)/2}}\sum_{k=0}^{(n^{r-j}+1)/2}\frac{(\frac{1}{2})_k^3}{k!^3} \pmod{\Phi_{n^{j-1}}(q^n)}.
\end{align*}
Similarly as before, we can show that the $d=2$ case of \eqref{eq:main-1} is true.
\end{proof}

\section{Proof of Theorem \ref{thm:main-2}}
We first recall some basic properties of Morita's $p$-adic Gamma function \cite{AAR,Robert}.
For any odd prime $p$, the $p$-adic Gamma function is defined by $\Gamma_p(0)=1$, and
$$
\Gamma_p(n)=(-1)^n\prod_{\substack{0< k<n\\ p\nmid k}}k
$$
for integers $n\geqslant 1$.
Let $\mathbb{Z}_p$ denote the ring of all $p$-adic integers. We can extend $\Gamma_p$ to all $x\in\mathbb{Z}_p$ by the limit:
$$
\Gamma_p(x)=\lim_{x_n\to x}\Gamma_p(x_n),
$$
where $x_n$ is any positive integer sequence that $p$-adically tends to $x$.
By the definition, one has
\begin{equation}\label{padicgamma1}
\frac{\Gamma_p(x+1)}{\Gamma_p(x)}
=\begin{cases}\displaystyle -x,\quad p\nmid x,\\[5pt]
\displaystyle -1,\quad p\mid x.
\end{cases}
\end{equation}
It is also known that, for $x\in\mathbb{Z}_p$, there holds
\begin{equation}\label{padicgamma2}
\Gamma_p(x)\Gamma_p(1-x)=(-1)^{a_0(x)},
\end{equation}
where $a_0(x)$ is the smallest positive integer such that $a_0(x)\equiv x\pmod{p}$.

To prove Theorem \ref{thm:main-3}, we also need the following result (see \cite[Theorem 14]{LR}).

\begin{lemma}\label{long} Let $p$ be an odd prime and $r$ a positive integer. Then, for $a,m\in\mathbb{Z}_p$,
\begin{equation}\label{padicgamma3}
\Gamma_p(a+mp^r)\equiv\Gamma_p(a)+\Gamma_p'(a)mp^r\pmod{p^{2r}}.
\end{equation}
\end{lemma}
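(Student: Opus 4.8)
The statement is a first-order $p$-adic Taylor expansion of $\Gamma_p$, with the quadratic remainder forced into $p^{2r}\mathbb{Z}_p$ because the increment $mp^r$ is $p$-adically small. The plan is to first reduce to the case where $a$ and $m$ are positive integers. For fixed $r$, the quantity $\Gamma_p(a+mp^r)-\Gamma_p(a)-\Gamma_p'(a)mp^r$ is a continuous function of $(a,m)\in\mathbb{Z}_p^2$ (using that $\Gamma_p$ is continuously differentiable, a standard fact), and the condition of lying in $p^{2r}\mathbb{Z}_p$ is closed; since the positive integers are dense in $\mathbb{Z}_p$, it suffices to verify the congruence for positive integers $a,m$.

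For positive integers, the definition $\Gamma_p(n)=(-1)^n\prod_{0<k<n,\,p\nmid k}k$ gives the exact identity $\Gamma_p(a+mp^r)/\Gamma_p(a)=(-1)^m\prod_{a\le k<a+mp^r,\,p\nmid k}k$. Writing each factor via its Teichm\"uller decomposition $k=\omega(k)\langle k\rangle$, where $\omega(k)$ is the $(p-1)$-th root of unity with $\omega(k)\equiv k$ and $\langle k\rangle\equiv 1\pmod p$, the Teichm\"uller part factors as $(\prod_{j=1}^{p-1}\omega(j))^{mp^{r-1}}=(-1)^{mp^{r-1}}=(-1)^m$, since each nonzero residue modulo $p$ occurs $mp^{r-1}$ times in the interval and $p^{r-1}$ is odd. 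This cancels the sign and leaves the clean identity $\Gamma_p(a+mp^r)/\Gamma_p(a)=\prod_{a\le k<a+mp^r,\,p\nmid k}\langle k\rangle$, a product of principal units; by the generalized Wilson congruence $\prod_{u\in(\mathbb{Z}/p^r)^\times}u\equiv-1\pmod{p^r}$ this product lies in $1+p^r\mathbb{Z}_p$.

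Next I would take the $p$-adic logarithm $\log_p$ and group the index $k$ into the residue classes modulo $p^r$ that are prime to $p$, each represented once in $[a,a+p^r)$ by some $k_b$ and occurring $m$ times in $[a,a+mp^r)$ as $k_b+jp^r$, $0\le j<m$. Expanding $\log_p\langle k_b+jp^r\rangle=\log_p\langle k_b\rangle+jp^r k_b^{-1}+O(p^{2r})$ and summing over $j$ produces $m\log_p\langle k_b\rangle+\binom{m}{2}p^r k_b^{-1}+O(p^{2r})$; summing over $b$ and invoking $\sum_{u\in(\mathbb{Z}/p^r)^\times}u^{-1}\equiv\sum_u u\equiv 0\pmod{p^r}$ (the units pair off as $u\leftrightarrow p^r-u$) discards the $\binom{m}{2}$ term modulo $p^{2r}$. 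Exponentiating back, I obtain the linear-in-$m$ relation $\Gamma_p(a+mp^r)/\Gamma_p(a)\equiv 1+m\,T_r(a)\pmod{p^{2r}}$, where $T_r(a)=\log_p\!\big(\Gamma_p(a+p^r)/\Gamma_p(a)\big)\in p^r\mathbb{Z}_p$.

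It then remains to identify the coefficient, namely to show $T_r(a)\equiv \Gamma_p'(a)\Gamma_p(a)^{-1}p^r\pmod{p^{2r}}$, and this is the step I expect to be the main obstacle. Equivalently, writing $\Lambda=\log_p\Gamma_p$, which is analytic on the disc $a+p\mathbb{Z}_p$ with $\Lambda'=\Gamma_p'/\Gamma_p$, one needs $\Lambda(a+p^r)-\Lambda(a)\equiv \Lambda'(a)p^r\pmod{p^{2r}}$, i.e. the quadratic and higher Taylor terms of $\Lambda$ contribute only modulo $p^{2r}$. The difficulty is that a naive difference-quotient limit recovers $\Gamma_p'(a)$ from the integer data only to precision $p^{2r-1}$, losing one power of $p$. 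I would recover the missing power from the local analyticity of $\Gamma_p$ (in the sense of Morita and Robert) together with the digamma recursion $\psi(x+1)-\psi(x)=x^{-1}$ for $p\nmid x$ and $\psi(x+1)-\psi(x)=0$ for $p\mid x$, obtained by differentiating \eqref{padicgamma1}; this pins the linear coefficient to $\Gamma_p'(a)/\Gamma_p(a)$ with precision $p^r$ and completes the argument.
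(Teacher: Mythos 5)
Your proposal is not being compared against an in-paper argument, because the paper gives none: Lemma~\ref{long} is quoted from Long and Ramakrishna \cite[Theorem 14]{LR} without proof. Judged on its own terms, your argument is correct and genuinely informative up to its final step. The reduction to positive integers $a,m$ by continuity and density is fine (granting the standard fact that $\Gamma_p$ is $C^1$), the sign bookkeeping $(-1)^{mp^r}=(-1)^m$ and the Teichm\"uller factorization are right, and the grouping into unit classes modulo $p^r$ together with $\sum_{u\in(\mathbb{Z}/p^r)^\times}u^{-1}\equiv 0\pmod{p^r}$ correctly kills the $\binom{m}{2}$ term. This cleanly establishes
\begin{equation*}
\Gamma_p(a+mp^r)\equiv\Gamma_p(a)\bigl(1+m\,T_r(a)\bigr)\pmod{p^{2r}},\qquad T_r(a)=\log_p\!\bigl(\Gamma_p(a+p^r)/\Gamma_p(a)\bigr)\in p^r\mathbb{Z}_p,
\end{equation*}
i.e.\ linearity in $m$, and so reduces the lemma to its $m=1$ case. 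That is real progress, but it is not the lemma.

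The remaining identification $T_r(a)\equiv p^r\,\Gamma_p'(a)/\Gamma_p(a)\pmod{p^{2r}}$ is exactly the analytic heart of the statement, and your sketch does not prove it --- as you yourself flag. The tools you name do not suffice as described. Differentiating \eqref{padicgamma1} gives the digamma recursion, but that only relates values at arguments differing by $1$; it carries no information about the size of the quadratic Taylor remainder at scale $p^r$ (it says nothing about $\Gamma_p''$). Qualitative local analyticity is also too weak: if one only knows that $\Gamma_p$ is analytic on each disc $c+p\mathbb{Z}_p$ with $|\Gamma_p|\le 1$, the trivial estimate on the divided coefficients $c_n=\Gamma_p^{(n)}(a)/n!$ is $|c_n|\le p^n$, so the $n=2$ term of the Taylor expansion at step $mp^r$ is only $O(p^{2r-2})$ --- the loss is even worse than the one power you concede. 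Nor can you rescue it by differentiating the congruence $\Gamma_p(a+sp^r)\equiv\Gamma_p(a)(1+sT_r(a))\pmod{p^{2r}}$ (which your steps do extend to all $s\in\mathbb{Z}_p$ by continuity): a function with values in $p^{2r}\mathbb{Z}_p$ can have derivatives of arbitrary size, so congruences cannot be differentiated. What is actually needed is a quantitative bound such as $|\Gamma_p^{(n)}(a)/n!|\le 1$ for $n\ge 2$ (or anything forcing $|c_n p^{rn}|\le p^{-2r}$ there), and supplying precisely this kind of control of the higher divided derivatives of $\Gamma_p$ is the substance of the proof in \cite{LR}. Without that ingredient, your argument terminates at the $m=1$ case rather than completing it.
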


\begin{proof}[Proof of Theorem {\rm\ref{thm:main-3}}]
Let $\Gamma(x)$ be the classical Gamma function. The $r=1$ case is already known. This can also be deduced from comparing \eqref{eq:h2} and \eqref{eq:mod-phi} and
noticing that, for $n\equiv 1\pmod{4}$,
\begin{align*}
\dfrac{(q^2;q^4)_{(n-1)/4}^2}{(q^4;q^4)_{(n-1)/4}^2}q^{(n-1)/2}
&\equiv \dfrac{(q^{2-n},q^{2+n};q^4)_{(n-1)/4}}{(q^{4-n},q^{4+n};q^4)_{(n-1)/4}}q^{(n-1)/2}\\
&=[n]\frac{(q^3;q^4)_{(n-1)/2}}{(q^5;q^4)_{(n-1)/2}} \pmod{\Phi_n(q)^2}.
\end{align*}

We now assume that $r>1$. In view of \eqref{padicgamma1}, there hold
\begin{align*}
\frac{(\frac34)_{\frac{p^{r}-1}2}}{(\frac54)_{\frac{p^{r}-1}2}}
&=\frac{\Gamma(\frac{2p^{r}+1}{4})\Gamma(\frac54)}{\Gamma(\frac34)\Gamma(\frac{2p^{r}+3}{4})}
=\frac{\frac{3p}{4}\cdot\frac{7p}{4}\cdots\frac{2p^{r}-3p}{4}} {\frac{p}{4}\cdot\frac{5p}{4}\cdots\frac{2p^{r}-p}{4}}
\cdot\frac{\Gamma_p(\frac{2p^{r}+1}{4})\Gamma_p(\frac54)}{\Gamma_p(\frac34)\Gamma_p(\frac{2p^{r}+3}{4})}\\
&=\frac{(\frac34)_{\frac{p^{r-1}-1}{2}}} {\frac{p}{4}(\frac54)_{\frac{p^{r-1}-1}{2}}}
\cdot\frac{\Gamma_p(\frac{2p^{r}+1}{4})\Gamma_p(\frac54)}{\Gamma_p(\frac34)\Gamma_p(\frac{2p^{r}+3}{4})}.
\end{align*}
It follows that
\begin{align*}
p\frac{(\frac{3}{4})_{(p^{r}-1)/2}(\frac{5}{4})_{(p^{r-1}-1)/2}}
{(\frac{5}{4})_{(p^{r}-1)/2}(\frac{3}{4})_{(p^{r-1}-1)/2}}
=4\frac{\Gamma_p(\frac{2p^{r}+1}{4})\Gamma_p(\frac54)}{\Gamma_p(\frac34)\Gamma_p(\frac{2p^{r}+3}{4})}
= -\frac{\Gamma_p(\frac{2p^r+1}{4})\Gamma_p(\frac14)}{\Gamma_p(\frac34)\Gamma_p(\frac{2p^r+3}{4})}.
\end{align*}
By \eqref{padicgamma2} and \eqref{padicgamma3}, we have
\begin{align*}
\frac{\Gamma_p(\frac{2p^r+1}{4})}{\Gamma_p(\frac{2p^r+3}{4})}
&=(-1)^{(p+3)/4}\Gamma_p(\tfrac{1+2p^r}{4})\Gamma_p(\tfrac{1-2p^r}{4})  \\
&\equiv (-1)^{(p+3)/4}(\Gamma_p(\tfrac14)+\tfrac{1}{2}\Gamma_p'(\tfrac14)p^r)
(\Gamma_p(\tfrac14)-\tfrac{1}{2}\Gamma_p'(\tfrac14)p^r)\\
&\equiv (-1)^{(p+3)/4}\Gamma_p(\tfrac14)^2 \pmod{p^{2r}}.
\end{align*}
The proof then follows from the identity $\Gamma_p(\frac14)\Gamma_p(\frac34)=(-1)^{(p+3)/4}$.
\end{proof}

\section{Proof of Theorem \ref{thm:main-3}}
The proof is exactly the same as that of \ref{thm:main-1}. Firstly, we have the
following different $q$-analogue of \eqref{eq:h2} obtained by the author and Zudilin \cite{GuoZu3}:
\begin{align}
&\sum_{k=0}^{(n-1)/2}\frac{(1+q^{4k+1})(q^2;q^4)_k^3}{(1+q)(q^4;q^4)_k^3}q^{k} \notag\\
&\quad
\equiv
\begin{cases}
\dfrac{[n]_{q^2}(q^3;q^4)_{(n-1)/2}}{(q^5;q^4)_{(n-1)/2}}q^{(1-n)/2} \pmod{\Phi_n(q)^2 } &\text{if}\; n\equiv1\pmod4, \\[10pt]
0 \pmod{\Phi_n(q)^2 } &\text{if}\; n\equiv3\pmod4.
\end{cases}
\label{eq:mod-phi-cases}
\end{align}
Secondly, we can prove that, for all positive integers $m$ and $n$ with $n\equiv 1\pmod{4}$,
modulo $\Phi_n(q)$,
\begin{align}
\sum_{k=0}^{mn-1}\frac{(1+q^{4k+1})(q^2;q^4)_k^3}{(1+q)(q^4;q^4)_k^3}q^{k}
&\equiv \dfrac{[n]_{q^2}(q^3;q^4)_{(n-1)/2}}{(q^5;q^4)_{(n-1)/2}}q^{(1-n)/2}\sum_{k=0}^{m-1}\frac{(\frac{1}{2})_k^3}{k!^3}. \label{eq:diff}
\end{align}

\section{More Dwork-type $q$-congruences}
Rodriguez-Villegas \cite[(36)]{RV} conjectured that, for any odd prime $p$,
\begin{equation}
\sum_{k=0}^{(p-1)/2} \frac{(\frac{1}{2})_k^2}{k!^2}
\equiv (-1)^{(p-1)/2} \pmod{p^2},  \label{eq:RV}
\end{equation}
which was later confirmed by Mortenson \cite{Mortenson1}. The author, Pan, and Zhang \cite[Corollary 3.1]{GPZ} gave a $q$-analogue of \eqref{eq:RV}
as follows: for any odd integer $n>1$,
\begin{equation}
\sum_{k=0}^{(n-1)/2}\frac{(q;q^2)_k^2}{(q^2;q^2)_k^2}
\equiv (-1)^{(n-1)/2}q^{(1-n^2)/4} \pmod{\Phi_n(q)^2}. \label{eq:gpz}
\end{equation}
In this section, we shall give the following generalization of \eqref{eq:gpz}, which was conjectured by the author and Zudilin \cite[Conjecture 4.6]{GuoZu4}.

\begin{theorem}\label{conj:6}
Let $n>1$ be an odd integer and let $r\geqslant 1$. Then, modulo\break $\Phi_{n^r}(q)\prod_{j=1}^r\Phi_{n^j}(q)$,
\begin{equation}
\sum_{k=0}^{(n^r-1)/d}\frac{(q;q^2)_k^2 }{(q^2;q^2)_{k}^2}
\equiv (-1)^{(n-1)/2}q^{(1-n)(1+n^{2r-1})/4}
\sum_{k=0}^{(n^{r-1}-1)/d}\frac{(q^n;q^{2n})_k^2}{(q^{2n};q^{2n})_{k}^2 }, \label{eq:conj6}
\end{equation}
where $d=1,2$.
\end{theorem}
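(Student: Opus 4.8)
The plan is to mirror the proof of Theorem~\ref{thm:main-1} line by line, with \eqref{eq:gpz} playing the role of \eqref{eq:mod-phi}. The case $r=1$ of \eqref{eq:conj6} is exactly \eqref{eq:gpz} (the extra terms in the $d=1$ sum vanishing as discussed below), so I assume $r\geqslant 2$. The key preliminary step is the analogue of Lemma~\ref{lem:2}: for all positive integers $m,n$ with $n$ odd, modulo $\Phi_n(q)$,
\begin{equation*}
\sum_{k=0}^{mn-1}\frac{(q;q^2)_k^2}{(q^2;q^2)_k^2}
\equiv (-1)^{(n-1)/2}q^{(1-n^2)/4}\sum_{k=0}^{m-1}\frac{(\tfrac{1}{2})_k^2}{k!^2}.
\end{equation*}
To prove it I would use the identity $\frac{(q;q^2)_k}{(q^2;q^2)_k}=\frac{1}{(-q;q)_k^2}{2k\brack k}$ already noted in Section~2, write $k=rn+s$ with $0\leqslant s\leqslant n-1$, and apply the $q$-Lucas theorem to ${2rn+2s\brack rn+s}$. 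The terms with $(n-1)/2<s\leqslant n-1$ drop out because there $2s=n+(2s-n)$ with $2s-n<s$, so ${2rn+2s\brack rn+s}\equiv{2r+1\choose r}{2s-n\brack s}=0\pmod{\Phi_n(q)}$; meanwhile Lemma~\ref{lem:1} turns $(-q;q)_{rn+s}^4$ into $2^{4r}(-q;q)_s^4$. Recognizing ${2r\choose r}^2/2^{4r}=(\tfrac{1}{2})_r^2/r!^2$ and applying \eqref{eq:gpz} to the inner $s$-sum then yields the displayed reduction.

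Next I would treat the base modulus $\Phi_{n^r}(q)^2$. Applying \eqref{eq:gpz} with $n\mapsto n^r$ gives the left-hand side of \eqref{eq:conj6} $\equiv(-1)^{(n^r-1)/2}q^{(1-n^{2r})/4}$, and applying it with $q\mapsto q^n$, $n\mapsto n^{r-1}$ gives the inner sum on the right $\equiv(-1)^{(n^{r-1}-1)/2}q^{n(1-n^{2r-2})/4}$ modulo $\Phi_{n^{r-1}}(q^n)^2$, hence modulo $\Phi_{n^r}(q)^2$ since $\Phi_{n^r}(q)\mid\Phi_{n^{r-1}}(q^n)$. A bookkeeping check — that $\tfrac{n-1}{2}+\tfrac{n^{r-1}-1}{2}\equiv\tfrac{n^r-1}{2}\pmod 2$ and that $\tfrac{(1-n)(1+n^{2r-1})}{4}+\tfrac{n(1-n^{2r-2})}{4}=\tfrac{1-n^{2r}}{4}$ — shows these combine to give precisely the prefactor $(-1)^{(n-1)/2}q^{(1-n)(1+n^{2r-1})/4}$, so \eqref{eq:conj6} holds modulo $\Phi_{n^r}(q)^2$. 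For the intermediate moduli $\Phi_{n^j}(q)$ with $1\leqslant j\leqslant r-1$, I would apply the reduction lemma (with $m=n^{r-j}$, both as stated and with $q\mapsto q^n$) to write both sides of \eqref{eq:conj6} as multiples $A_jC$ and $B_jC$ of the common factor $C=\sum_{k=0}^{n^{r-j}-1}(\tfrac{1}{2})_k^2/k!^2$, where $A_j=(-1)^{(n^j-1)/2}q^{(1-n^{2j})/4}$ and $B_j=(-1)^{(n^{j-1}-1)/2}q^{n(1-n^{2j-2})/4}$. The same sign/exponent identity as above, now with $r$ replaced by $j$, gives $A_j=P_jB_j$ with $P_j=(-1)^{(n-1)/2}q^{(1-n)(1+n^{2j-1})/4}$, so that the left side $\equiv A_jC=P_jB_jC\equiv P_j\times(\text{right side})\pmod{\Phi_{n^j}(q)}$ with no cancellation of $C$ required.

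It then remains to reconcile the $j$-dependent prefactor $P_j$ with the $r$-dependent one, i.e.\ to prove the analogue of Lemma~\ref{lem:3}: modulo $\Phi_{n^j}(q)$,
\begin{equation*}
q^{(1-n)(1+n^{2r-1})/4}\equiv q^{(1-n)(1+n^{2j-1})/4}.
\end{equation*}
The exponent difference equals $(1-n)n^{2j-1}(n^{2r-2j}-1)/4$; since $1-n$ and $n^{2r-2j}-1$ are both even this is an integer multiple of $n^{2j-1}$, hence of $n^{j}$ because $j\geqslant 1$, and $q^{n^j}\equiv 1\pmod{\Phi_{n^j}(q)}$ finishes the claim. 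Finally, since $\Phi_n(q),\dots,\Phi_{n^{r-1}}(q),\Phi_{n^r}(q)^2$ are pairwise coprime, the Chinese remainder theorem assembles the congruences modulo each factor into \eqref{eq:conj6} modulo $\Phi_{n^r}(q)\prod_{j=1}^r\Phi_{n^j}(q)$; the $d=2$ case follows exactly as in Theorem~\ref{thm:main-1}, since $\frac{(q;q^2)_k^2}{(q^2;q^2)_k^2}={2k\brack k}^2/(-q;q)_k^4\equiv 0\pmod{\Phi_{n^j}(q)}$ for $(n^r-1)/2<k\leqslant n^j(n^{r-j}+1)/2$. The main obstacle is not any single step but the sign and $q$-exponent bookkeeping in the base case and in the analogue of Lemma~\ref{lem:3}, which must be verified uniformly for both $n\equiv 1$ and $n\equiv 3\pmod 4$.
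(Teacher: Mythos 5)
Your proposal is correct and follows essentially the same route as the paper's own sketch: reduce via the $q$-Lucas theorem and Lemma \ref{lem:1} to a Lemma \ref{lem:2}-type congruence $\sum_{k=0}^{mn-1}\equiv(-1)^{(n-1)/2}q^{(1-n^2)/4}\sum_{k=0}^{m-1}(\tfrac12)_k^2/k!^2 \pmod{\Phi_n(q)}$, handle the modulus $\Phi_{n^r}(q)^2$ by two applications of \eqref{eq:gpz}, match the prefactors modulo $\Phi_{n^j}(q)$ via the exponent congruence $q^{(1-n)(1+n^{2j-1})/4}\equiv q^{(1-n)(1+n^{2r-1})/4}$, and assemble by coprimality of the cyclotomic factors. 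Your write-up is in fact slightly more careful than the paper's (you verify the sign/exponent bookkeeping and prove the exponent congruence that the paper merely asserts), and the minor endpoint imprecision in the $d=2$ vanishing range is inherited from, and identical to, the paper's own treatment.
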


\begin{proof}[Sketch of proof]
In view of \eqref{eq:gpz}, we have
\begin{align*}
\sum_{k=0}^{(n^r-1)/d}\frac{(q;q^2)_k^2 }{(q^2;q^2)_{k}^2}
&\equiv (-1)^{(n^r-1)/2}q^{(1-n^{2r})/4}  \pmod{\Phi_{n^r}(q)^2}, \\
\sum_{k=0}^{(n^{r-1}-1)/d}\frac{(q^n;q^{2n})_k^2 }{(q^{2n};q^{2n})_{k}^2}
&\equiv (-1)^{(n^{r-1}-1)/2}q^{n(1-n^{2r-2})/4} \pmod{\Phi_{n^{r-1}}(q^n)^2},
\end{align*}
where $d=1,2$. Since $\Phi_{n^{r-1}}(q^n)$ is a multiple of $\Phi_{n^r}(q)$, from the above two $q$-congruences we see that \eqref{eq:conj6} holds modulo $\Phi_{n^r}(q)^2$.

Moreover, we can also deduce from \eqref{eq:gpz} that, for all positive integers $m$ and $n$ with $n$ odd,
modulo $\Phi_n(q)$,
\begin{align*}
\sum_{k=0}^{mn-1}\frac{(q;q^2)_k^2}{(q^2;q^2)_k^2}
&\equiv (-1)^{(n-1)/2}q^{(1-n^2)/4}  \sum_{k=0}^{m-1}\frac{(\frac{1}{2})_k^2}{k!^2}.
\end{align*}
In particular, for $1\leqslant j\leqslant r-1$, there hold
\begin{align*}
\sum_{k=0}^{n^r-1}\frac{(q;q^2)_k^2 }{(q^2;q^2)_{k}^2}
&\equiv (-1)^{(n^j-1)/2}q^{(1-n^{2j})/4} \sum_{k=0}^{n^{r-j}-1}\frac{(\frac{1}{2})_k^2}{k!^2} \pmod{\Phi_{n^j}(q)}, \\
\sum_{k=0}^{n^{r-1}-1}\frac{(q^n;q^{2n})_k^2 }{(q^{2n};q^{2n})_{k}^2}
&\equiv (-1)^{(n^{j-1}-1)/2}q^{n(1-n^{2j-2})/4} \sum_{k=0}^{n^{r-j}-1}\frac{(\frac{1}{2})_k^3}{k!^3} \pmod{\Phi_{n^{j-1}}(q^n)}.
\end{align*}
It follows that, for $1\leqslant j\leqslant r-1$,
\begin{align*}
\sum_{k=0}^{n^r-1}\frac{(q;q^2)_k^2 }{(q^2;q^2)_{k}^2}
&\equiv (-1)^{(n-1)/2}q^{(1-n)(1+n^{2r-1})/4}
\sum_{k=0}^{n^{r-1}-1}\frac{(q^n;q^{2n})_k^2 }{(q^{2n};q^{2n})_{k}^2} \pmod{\Phi_{n^j}(q)},
\end{align*}
where we have used the fact $q^{(1-n)(1+n^{2j-1})/4}\equiv q^{(1-n)(1+n^{2r-1})/4} \pmod{\Phi_{n^j}(q)}$. This proves \eqref{eq:conj6} for $d=1$.
Similarly, we can prove it for $d=2$.
\end{proof}

For $n$ prime, letting $q\to 1$ in \eqref{eq:conj6}, we obtain the following result:
for any odd prime $p$ and positive integer $r$,
\begin{align*}
\sum_{k=0}^{(p^r-1)/d} \frac{(\frac{1}{2})_k^2}{k!^2}
&\equiv (-1)^{(p-1)/2}\sum_{k=0}^{(p^{r-1}-1)/2} \frac{(\frac{1}{2})_k^2}{k!^2} \pmod{p^{r+1}},
\end{align*}
where $d=1,2$. Note that the author and Zudilin \cite[(3.52)]{GuoZu4} have proved that the above supercongruence holds modulo $p^{2r}$.

The author and Zudilin \cite[Theorem 4.14]{GuoZu} applied Andrews' $q$-analogue of Gauss' $_2F_1(-1)$ summation (see \cite[Appendix (II.11)]{GR})
to show that, for $n\equiv 3\pmod{4}$,
$$
\sum_{k=0}^{(n-1)/2}\frac{(q;q^2)_k^2 q^{2k}}{(q^2;q^2)_k(q^4;q^4)_k} \equiv 0\pmod{\Phi_n(q)^2}.
$$
They \cite{GuoZu4} mentioned the following companion $q$-congruence: for $n\equiv 1\pmod{4}$,
\begin{equation}
\sum_{k=0}^{(n-1)/2}\frac{(q;q^2)_k^2 q^{2k}}{(q^2;q^2)_k(q^4;q^4)_k}
\equiv \bigg(\frac{-2}{n}\bigg)q^{(n-1)(n+3)/8}\frac{(q^2;q^4)_{(n-1)/4}}{(q^4;q^4)_{(n-1)/4}}\pmod{\Phi_n(q)^2},  \label{eq:gauss-2f1}
\end{equation}
where $(\frac{a}{b})$ denotes the Kronecker symbol.

Here we shall prove the following Dwork-type generalization of the above $q$-congruence, which was originally conjectured
by the author and Zudilin \cite[Conjecture 4.7]{GuoZu4}.

\begin{theorem}
Let $n>1$ be an integer with $n\equiv 1\pmod{4}$ and let $r\geqslant 1$. Then, modulo $\Phi_{n^r}(q)\prod_{j=1}^r\Phi_{n^j}(q)$,
\begin{align}
&\sum_{k=0}^{(n^r-1)/d}\frac{(q;q^2)_k^2 }{(q^2;q^2)_k(q^4;q^4)_k}q^{2k} \notag\\
&\quad\equiv \bigg(\frac{-2}{n}\bigg)q^{(n-1)(n^{2r-1}+3))/8}\frac{(q^2;q^4)_{(n^r-1)/4}(q^{4n};q^{4n})_{(n^{r-1}-1)/4}}{(q^4;q^4)_{(n^r-1)/4}(q^{2n};q^{4n})_{(n^{r-1}-1)/4}} \notag\\
&\quad\quad\times \sum_{k=0}^{(n^{r-1}-1)/d}\frac{(q^n;q^{2n})_k^2 }{(q^{2n};q^{2n})_k(q^{4n};q^{4n})_k} q^{2nk},  \label{eq:conj7}
\end{align}
where $d=1,2$.
\end{theorem}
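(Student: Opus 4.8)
The plan is to adapt the three-step strategy used for Theorems~\ref{thm:main-1} and~\ref{conj:6}, taking the $r=1$ instance \eqref{eq:gauss-2f1} as the seed $q$-supercongruence. Namely, I would establish \eqref{eq:conj7} separately modulo $\Phi_{n^r}(q)^2$ and modulo each $\Phi_{n^j}(q)$ with $1\leqslant j\leqslant r-1$, and then combine these using the coprimality of $\Phi_n(q),\Phi_{n^2}(q),\ldots,\Phi_{n^{r-1}}(q),\Phi_{n^r}(q)^2$. For the first modulus I would apply \eqref{eq:gauss-2f1} with base $q$ at level $n^r$ and with base $q^n$ at level $n^{r-1}$ (both legitimate since $n\equiv1\pmod4$ forces $n^i\equiv1\pmod4$); because $\Phi_{n^{r-1}}(q^n)$ is a multiple of $\Phi_{n^r}(q)$, both read modulo $\Phi_{n^r}(q)^2$, and for $d=1$ the surplus terms between $(n^r-1)/2$ and $n^r-1$ vanish modulo $\Phi_{n^r}(q)^2$ on account of the factor $(q;q^2)_k^2$. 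Substituting the $q^n$-evaluation into the right-hand side of \eqref{eq:conj7}, the factors $(q^{2n};q^{4n})_{(n^{r-1}-1)/4}$ and $(q^{4n};q^{4n})_{(n^{r-1}-1)/4}$ cancel and one is reduced to two scalar checks: the Kronecker identity $\big(\frac{-2}{n}\big)\big(\frac{-2}{n^{r-1}}\big)=\big(\frac{-2}{n^r}\big)$, immediate from complete multiplicativity of the symbol, and the integer identity
\[
(n-1)(n^{2r-1}+3)+n(n^{r-1}-1)(n^{r-1}+3)=(n^r-1)(n^r+3),
\]
which holds after expansion and guarantees that the $q$-powers agree exactly.

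For the second modulus, the pivotal ingredient is a Lucas-type reduction playing the role of Lemma~\ref{lem:2}: for all positive integers $m$ and $n$ with $n\equiv1\pmod4$, modulo $\Phi_n(q)$,
\[
\sum_{k=0}^{mn-1}\frac{(q;q^2)_k^2}{(q^2;q^2)_k(q^4;q^4)_k}q^{2k}
\equiv \bigg(\frac{-2}{n}\bigg)q^{(n-1)(n+3)/8}\frac{(q^2;q^4)_{(n-1)/4}}{(q^4;q^4)_{(n-1)/4}}\sum_{k=0}^{m-1}\frac{(\frac12)_k^2}{2^k\,k!^2}.
\]
To prove this I would first rewrite the summand, using $\frac{(q;q^2)_k}{(q^2;q^2)_k}=(-q;q)_k^{-2}{2k\brack k}$ and $(q^4;q^4)_k=(q^2;q^2)_k(-q^2;q^2)_k$, as $q^{2k}(-q;q)_k^{-4}(-q^2;q^2)_k^{-1}{2k\brack k}^2$. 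Writing $k=rn+s$ with $0\leqslant s\leqslant n-1$, the $q$-Lucas theorem turns ${2k\brack k}$ into ${2r\choose r}{2s\brack s}$ when $2s\leqslant n-1$ and into ${2r+1\choose r}{2s-n\brack s}=0$ when $2s\geqslant n$, so only $0\leqslant s\leqslant(n-1)/2$ survive; Lemma~\ref{lem:1}, applied once for $(-q;q)$ and once (after $q\mapsto q^2$) for $(-q^2;q^2)$, replaces the denominators by $2^{5r}(-q;q)_s^4(-q^2;q^2)_s$. Since $q^{2rn}\equiv1\pmod{\Phi_n(q)}$ and $2^{-5r}{2r\choose r}^2=(\frac12)_r^2/(2^r r!^2)$, summing over $s$ and invoking \eqref{eq:gauss-2f1}, and then summing over $r$, yields the displayed reduction.

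Applying this reduction with $n\mapsto n^j$, $m=n^{r-j}$ to the $q$-sum and with $q\mapsto q^n$, $n\mapsto n^{j-1}$, $m=n^{r-j}$ to the $q^n$-sum (the latter valid modulo $\Phi_{n^{j-1}}(q^n)$, hence modulo $\Phi_{n^j}(q)$), the common tail $\sum_k(\frac12)_k^2/(2^k k!^2)$ cancels and produces \eqref{eq:conj7} modulo $\Phi_{n^j}(q)$, but with a prefactor carrying $n^j$ where \eqref{eq:conj7} carries $n^r$. The step I expect to be most delicate is therefore the exact analogue of Lemma~\ref{lem:3}: that this $n^j$-prefactor is congruent modulo $\Phi_{n^j}(q)$ to the $n^r$-prefactor in \eqref{eq:conj7}. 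The $q$-power reduces by $q^{n^j}\equiv1\pmod{\Phi_{n^j}(q)}$ once one notes $(n-1)(n^{2r-1}+3)\equiv(n-1)(n^{2j-1}+3)\pmod{8n^j}$ (true because $2j-1\geqslant j$), while for the product I would pair each factor symmetrically about $q^{\pm n^j}$ and use $(1-q^{m-n^j})(1-q^{m+n^j})\equiv(1-q^m)^2\pmod{\Phi_N(q)^2}$ for $N\mid n^j$, together with the telescoping cancellation of the ``$q^{an}$''-factors exactly as in the proof of Lemma~\ref{lem:3}. Once this prefactor congruence is secured, the moduli combine by coprimality to give \eqref{eq:conj7} modulo $\Phi_{n^r}(q)\prod_{j=1}^r\Phi_{n^j}(q)$ for $d=1$; the case $d=2$ follows by the same argument, using as in Theorem~\ref{thm:main-1} that the summands between $(n^r-1)/2$ and $n^j(n^{r-j}+1)/2$ are divisible by $\Phi_{n^j}(q)$.
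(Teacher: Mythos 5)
Your proposal is correct and follows essentially the same route as the paper's own (sketched) proof: the seed congruence \eqref{eq:gauss-2f1} applied at levels $n^r$ and $n^{r-1}$ for the modulus $\Phi_{n^r}(q)^2$, a $q$-Lucas/Lemma~\ref{lem:1} reduction of the sum over $mn$ terms to the prefactor times $\sum_{k}(\tfrac12)_k^2/(2^k k!^2)$ for each $\Phi_{n^j}(q)$, the Lemma~\ref{lem:3}-style prefactor congruence between levels $j$ and $r$, and coprimality to combine. In fact you supply several details the paper leaves implicit, such as the vanishing of the summands beyond $(n^r-1)/2$, the exponent identity $(n-1)(n^{2r-1}+3)+n(n^{r-1}-1)(n^{r-1}+3)=(n^r-1)(n^r+3)$, and the derivation of the Lucas-type reduction itself.
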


\begin{proof}[Sketch of proof]
By \eqref{eq:gauss-2f1}, modulo $\Phi_{n^r}(q)^2$,
\begin{align*}
\sum_{k=0}^{(n^r-1)/d}\frac{(q;q^2)_k^2 q^{2k}}{(q^2;q^2)_k(q^4;q^4)_k}
&\equiv \bigg(\frac{-2}{n^r}\bigg)q^{(n^r-1)(n^r+3)/8}\frac{(q^2;q^4)_{(n^r-1)/4}}{(q^4;q^4)_{(n^r-1)/4}},   \\
\sum_{k=0}^{(n^{r-1}-1)/d}\frac{(q^n;q^{2n})_k^2 q^{2nk}}{(q^{2n};q^{2n})_k(q^{4n};q^{4n})_k}
&\equiv \bigg(\frac{-2}{n^{r-1}}\bigg)q^{n(n^{r-1}-1)(n^{r-1}+3)/8}\frac{(q^{2n};q^{4n})_{(n^{r-1}-1)/4}}{(q^{4n};q^{4n})_{(n^{r-1}-1)/4}},
\end{align*}
where $d=1,2$. Hence, the $q$-congruence \eqref{eq:conj7} is true modulo $\Phi_{n^r}(q)^2$.

Besides, we can conclude from \eqref{eq:gauss-2f1} that, for all positive integers $m$ and $n$ with $n\equiv 1\pmod{4}$,
modulo $\Phi_n(q)$,
\begin{align*}
\sum_{k=0}^{mn-1}\frac{(q;q^2)_k^2 q^{2k}}{(q^2;q^2)_k(q^4;q^4)_k}
&\equiv \bigg(\frac{-2}{n}\bigg)q^{(n-1)(n+3)/8}\frac{(q^2;q^4)_{(n-1)/4}}{(q^4;q^4)_{(n-1)/4}}\sum_{k=0}^{m-1}\frac{(\frac{1}{2})_k^2}{2^{k} k!^2},
\end{align*}
and so, for $1\leqslant j\leqslant r-1$, modulo $\pmod{\Phi_{n^j}(q)}$,
\begin{align*}
\sum_{k=0}^{n^r-1}\frac{(q;q^2)_k^2 q^{2k}}{(q^2;q^2)_k(q^4;q^4)_k}
&\equiv \bigg(\frac{-2}{n^j}\bigg)q^{(n^j-1)(n^j+3)/8}\frac{(q^2;q^4)_{(n^j-1)/4}}{(q^4;q^4)_{(n^j-1)/4}}
\sum_{k=0}^{n^{r-j}-1}\frac{(\frac{1}{2})_k^2}{2^{k} k!^2} , \\
\sum_{k=0}^{n^{r-1}-1}\frac{(q^n;q^{2n})_k^2 q^{2nk}}{(q^{2n};q^{2n})_k(q^{4n};q^{4n})_k}
&\equiv \bigg(\frac{-2}{n^{j-1}}\bigg)q^{n(n^{j-1}-1)(n^{j-1}+3)/8}\frac{(q^{2n};q^{4n})_{(n^{j-1}-1)/4}}{(q^{4n};q^{4n})_{(n^{j-1}-1)/4}}
\sum_{k=0}^{n^{r-j}-1}\frac{(\frac{1}{2})_k^2}{2^{k} k!^2}.
\end{align*}
It follows that, for $1\leqslant j\leqslant r-1$, modulo $\Phi_{n^j}(q)$,
\begin{align*}
&\sum_{k=0}^{n^r-1}\frac{(q;q^2)_k^2 }{(q^2;q^2)_k(q^4;q^4)_k}q^{2k} \notag\\
&\quad\equiv \bigg(\frac{-2}{n}\bigg)q^{(n-1)(n^{2j-1}+3))/8}\frac{(q^2;q^4)_{(n^j-1)/4}(q^{4n};q^{4n})_{(n^{j-1}-1)/4}}{(q^4;q^4)_{(n^j-1)/4}(q^{2n};q^{4n})_{(n^{j-1}-1)/4}} \notag\\
&\quad\quad\times \sum_{k=0}^{n^{r-1}-1}\frac{(q^n;q^{2n})_k^2 }{(q^{2n};q^{2n})_k(q^{4n};q^{4n})_k} q^{2nk}.
\end{align*}
Like the proof of Lemma \eqref{lem:3}, we can prove that
$$
\frac{(q^2;q^4)_{(n^r-1)/4}(q^{4n};q^{4n})_{(n^{r-1}-1)/4}}{(q^4;q^4)_{(n^r-1)/4}(q^{2n};q^{4n})_{(n^{r-1}-1)/4}}
\equiv \frac{(q^2;q^4)_{(n^j-1)/4}(q^{4n};q^{4n})_{(n^{j-1}-1)/4}}{(q^4;q^4)_{(n^j-1)/4}(q^{2n};q^{4n})_{(n^{j-1}-1)/4}} \pmod{\Phi_{n^j}(q)}.
$$
Using $q^{(n-1)(n^{2j-1}+3)/8}\equiv q^{(n-1)(n^{2r-1}+3)/8} \pmod{\Phi_{n^j}(q)}$, we complete the proof of \eqref{eq:conj7} for $d=1$.
The proof of the $d=2$ case is exactly the same.
\end{proof}

When $n$ is a prime and $q$ tends to $1$ in \eqref{eq:conj7}, we arrive at the following result:
for any prime $p\equiv 1\pmod{4}$,
\begin{align*}
\sum_{k=0}^{(p^r-1)/d}\frac{(\frac{1}{2})_k^2}{2^{k} k!^2}
\equiv \bigg(\frac{-2}{p}\bigg)\frac{(\frac{1}{2})_{(p^r-1)/4}(1)_{(p^{r-1}-1)/4}}{(1)_{(p^r-1)/4}(\frac{1}{2})_{(n^{r-1}-1)/4}}
\sum_{k=0}^{(p^{r-1}-1)/d}\frac{(\frac{1}{2})_k^2}{2^{k} k!^2} \pmod{p^{r+1}},
\end{align*}
where $d=1,2$. We point out that the $r=1$ case was first proved by Sun \cite{SunZH}.
Moreover, the author and Zudilin \cite{GuoZu4} conjectured that the above supercongruence is true modulo $p^{2r}$.

The author \cite{Gu19b} established the $q$-congruence
\begin{align}
\sum_{k=0}^{n-1}\frac{q^k}{(-q;q)_{k}}{2k\brack k}\equiv (-1)^{(n-1)/2}q^{(n^2-1)/4} \pmod{\Phi_n(q)^2}, \label{eq:Tauraso-1}
\end{align}
which was conjectured by Tauraso \cite{Ta13} for $n$ being an odd prime.
The author also conjectured that
\begin{align}
\sum_{k=0}^{n-1} q^k {2k\brack k} \equiv \left(\frac{-3}{n}\right)q^{(n^2-1)/3} \pmod{\Phi_n(q)^2},  \label{eq:guolp}
\end{align}
which was recently confirmed by Liu and Petrov \cite{LP19}.

Here we shall prove the following Dwork-type $q$-generalizations of \eqref{eq:Tauraso-1} and \eqref{eq:guolp}, confirming a conjecture of the author and Zudilin \cite[Conjecture 4.8]{GuoZu4}.

\begin{theorem}\label{conj:8}
Let $n>1$ be an odd integer and let $r\geqslant 1$. Then, modulo\break $\Phi_{n^r}(q)^{2-d}\prod_{j=1}^r\Phi_{n^j}(q)$,
\begin{align}
\sum_{k=0}^{(n^r-1)/d}\frac{q^k}{(-q;q)_{k}}{2k\brack k}
&\equiv (-1)^{(n-1)/2} q^{(n-1)(1+n^{2r-1})/4}
\sum_{k=0}^{(n^{r-1}-1)/d}\frac{q^{nk}}{(-q^n;q^n)_{k}}{2k\brack k}_{q^n}, \label{eq:conj8-1}\\
\sum_{k=0}^{(n^r-1)/d}q^k {2k\brack k}
&\equiv q^{(n-1)(1+n^{2r-1})/3}\bigg(\frac{-3}{n}\bigg)
\sum_{k=0}^{(n^{r-1}-1)/d}q^{nk}{2k\brack k}_{q^n},  \label{eq:conj8-2}
\end{align}
where $d=1,2$. When $d=1$, the second $q$-congruence is still true for even integers $n$.
\end{theorem}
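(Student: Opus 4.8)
The plan is to reproduce, step for step, the architecture used for Theorem~\ref{thm:main-1} and for the two results already proved in this section, now with \eqref{eq:Tauraso-1} and \eqref{eq:guolp} playing the role of the base $q$-supercongruences. First I would settle the modulus $\Phi_{n^r}(q)^2$, which is only relevant when $d=1$. Replacing $q$ by $q^n$ and $n$ by $n^{r-1}$ in \eqref{eq:Tauraso-1} gives
\[
\sum_{k=0}^{n^{r-1}-1}\frac{q^{nk}}{(-q^n;q^n)_{k}}{2k\brack k}_{q^n}\equiv(-1)^{(n^{r-1}-1)/2}q^{n(n^{2r-2}-1)/4}\pmod{\Phi_{n^{r-1}}(q^n)^2},
\]
while \eqref{eq:Tauraso-1} with $n$ replaced by $n^r$ evaluates the left-hand side of \eqref{eq:conj8-1} modulo $\Phi_{n^r}(q)^2$. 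Since $\Phi_{n^r}(q)$ divides $\Phi_{n^{r-1}}(q^n)$, and since the prefactor $(-1)^{(n-1)/2}q^{(n-1)(1+n^{2r-1})/4}$ precisely accounts for the difference of the two monomials (indeed $(n-1)(1+n^{2r-1})/4+n(n^{2r-2}-1)/4=(n^{2r}-1)/4$ and the corresponding signs agree because $n$ is odd), the congruence \eqref{eq:conj8-1} holds modulo $\Phi_{n^r}(q)^2$; the same computation with \eqref{eq:guolp} yields \eqref{eq:conj8-2} modulo $\Phi_{n^r}(q)^2$.

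The core is a $q$-Lucas reduction parallel to Lemma~\ref{lem:2}. Writing $k=jn+t$ with $0\leqslant t\leqslant n-1$, the $q$-Lucas theorem gives ${2k\brack k}\equiv\binom{2j}{j}{2t\brack t}\pmod{\Phi_n(q)}$ when $2t\leqslant n-1$, while for $2t\geqslant n$ one gets ${2k\brack k}\equiv\binom{2j+1}{j}{2t-n\brack t}\equiv0$ because the residual lower index $t$ exceeds the residual upper index $2t-n$. Using in addition $q^{n}\equiv1\pmod{\Phi_n(q)}$ and, for the first sum only, Lemma~\ref{lem:1} applied to $(-q;q)_{jn+t}$, the double sum factorises into an inner $t$-sum times an outer $j$-sum. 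Extending the inner sum back to the full range $t=0,\dots,n-1$ (the extra terms vanish modulo $\Phi_n(q)$) and invoking \eqref{eq:Tauraso-1} and \eqref{eq:guolp}, I expect to obtain, for all positive integers $m$ and odd $n>1$ and modulo $\Phi_n(q)$,
\begin{align*}
\sum_{k=0}^{mn-1}\frac{q^k}{(-q;q)_{k}}{2k\brack k}
&\equiv(-1)^{(n-1)/2}q^{(n^2-1)/4}\sum_{k=0}^{m-1}\frac{2^k(\tfrac12)_k}{k!}, \\
\sum_{k=0}^{mn-1}q^k{2k\brack k}
&\equiv\Big(\frac{-3}{n}\Big)q^{(n^2-1)/3}\sum_{k=0}^{m-1}\frac{4^k(\tfrac12)_k}{k!}.
\end{align*}

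To finish, I would apply these reductions, with $n$ replaced by $n^j$ (and on the right-hand sums $q$ by $q^n$ and $n^j$ by $n^{j-1}$), for each $1\leqslant j\leqslant r-1$: both the left-hand side of \eqref{eq:conj8-1} and the right-hand sum become their respective monomial prefactors times one and the same classical truncation $\sum_{k=0}^{n^{r-j}-1}2^k(\tfrac12)_k/k!$. Cross-multiplying the two relations cancels the common truncation without any division, since the prefactors are units modulo $\Phi_{n^j}(q)$, and the ratio of prefactors simplifies: the signs combine to $(-1)^{(n-1)/2}$ (because $n^{j-1}$ is odd) and the $q$-powers combine to $q^{(n-1)(n^{2j-1}+1)/4}$. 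As $q^{n^j}\equiv1\pmod{\Phi_{n^j}(q)}$ and $(n-1)n^{2j-1}(1-n^{2r-2j})/4$ is a multiple of $n^j$, this exponent may be replaced by $(n-1)(n^{2r-1}+1)/4$, so \eqref{eq:conj8-1} holds modulo each $\Phi_{n^j}(q)$ for $1\leqslant j\leqslant r-1$; the parallel computation with $(n^2-1)/3$ replacing $(n^2-1)/4$, together with the multiplicativity of the Kronecker symbol, gives \eqref{eq:conj8-2}. Because $\Phi_n(q),\dots,\Phi_{n^{r-1}}(q),\Phi_{n^r}(q)^2$ are pairwise coprime, the Chinese remainder theorem combines these with the first step into the full modulus $\Phi_{n^r}(q)\prod_{j=1}^r\Phi_{n^j}(q)$, which is exactly the $d=1$ assertion.

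The $d=2$ case should follow the truncation device from the proof of Theorem~\ref{thm:main-1}: whenever the residue of $k$ modulo $n^j$ lies in the upper half, ${2k\brack k}\equiv0\pmod{\Phi_{n^j}(q)}$, so the partial sum up to $(n^r-1)/2$ may be extended up to $n^j(n^{r-j}+1)/2$ without changing its class modulo $\Phi_{n^j}(q)$, after which the reduction above applies verbatim and only the single factor $\Phi_{n^r}(q)$ is needed. For the second congruence with $n$ even there is no $(-q;q)_k$ factor, so Lemma~\ref{lem:1} (which requires $n$ odd) never enters and the $q$-Lucas step is valid for all $n$; the only extra input is the validity of \eqref{eq:guolp} for even $n$. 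I expect the main obstacle to be bookkeeping rather than conceptual: confirming that the monomial exponents collapse correctly modulo $n^j$ — in particular that $(n-1)(n^{2j-1}+1)/4$ is an integer and differs from $(n-1)(n^{2r-1}+1)/4$ by a multiple of $n^j$ — and checking that the vanishing pattern in the $q$-Lucas step aligns so that the inner and outer sums separate exactly as claimed.
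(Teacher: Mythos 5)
Your proposal follows essentially the same route as the paper's (sketch of) proof: the paper likewise combines \eqref{eq:Tauraso-1} and \eqref{eq:guolp} (giving the congruence modulo $\Phi_{n^r}(q)^2$ when $d=1$) with exactly your $q$-Lucas block reductions---stated there as \eqref{eq:pf-11}--\eqref{eq:pf-22}---and the half-sum variants \eqref{eq:pf-33}--\eqref{eq:pf-44} for $d=2$, finishing by the same prefactor bookkeeping and coprimality argument. Your write-up is in fact more detailed than the paper's sketch; the only blemish (shared with the paper's own treatment of Theorem \ref{thm:main-1}) is the endpoint of the extension in the $d=2$ case: the summand at $k=n^j(n^{r-j}+1)/2$ has residue $0$ modulo $n^j$ and does not vanish, so the partial sum should be extended only to $n^j(n^{r-j}+1)/2-1$, i.e.\ to complete blocks.
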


\begin{proof}[Sketch of proof]
For $d=1$, the $q$-congruences \eqref{eq:conj8-1} and \eqref{eq:conj8-2} follow from \eqref{eq:Tauraso-1} and \eqref{eq:guolp}
and the following generalizations of them:
for all positive integers $m$ and $n$,
modulo $\Phi_n(q)$,
\begin{align}
\sum_{k=0}^{mn-1}\frac{q^k}{(-q;q)_{k}}{2k\brack k}
&\equiv (-1)^{(n-1)/2}q^{(n^2-1)/4}  \sum_{k=0}^{m-1}\frac{1}{2^k}{2k\choose k} \quad\text{($n$ is odd)}, \label{eq:pf-11} \\
\sum_{k=0}^{mn-1}q^k {2k\brack k}
&\equiv \left(\frac{-3}{n}\right)q^{(n^2-1)/3} \sum_{k=0}^{m-1}{2k\choose k}. \label{eq:pf-22}
\end{align}

From \eqref{eq:Tauraso-1} and \eqref{eq:guolp}, we immediately obtain
\begin{align}
\sum_{k=0}^{(n-1)/2}\frac{q^k}{(-q;q)_{k}}{2k\brack k}_q
&\equiv (-1)^{(n-1)/2}q^{(n^2-1)/4} \pmod{\Phi_n(q)}, \label{eq:pf-33}\\
\sum_{k=0}^{(n-1)/2} q^k {2k\brack k}
&\equiv \left(\frac{-3}{n}\right)q^{(n^2-1)/3} \pmod{\Phi_n(q)}.  \label{eq:pf-44}
\end{align}
Likewise, we can prove the $d=2$ case of \eqref{eq:conj8-1} and \eqref{eq:conj8-2}  by using \eqref{eq:pf-11}--\eqref{eq:pf-44}.
\end{proof}

Sun \cite[Conjecture 3\,(ii),(iii)]{Su19} proposed the following conjecture: for any prime $p$,
\begin{align}
\sum_{k=0}^{p^r-1}\frac{1}{2^k}{2k\choose k}
&\equiv  \bigg(\frac{-1}{p}\bigg)\sum_{k=0}^{p^{r-1}-1}\frac{1}{2^k}{2k\choose k} \pmod{p^{2r}}
\quad \text{($p>2$)}, \label{eq:pf-55}\\
\sum_{k=0}^{p^r-1}{2k\choose k}
&\equiv  \bigg(\frac{-3}{p}\bigg)\sum_{k=0}^{p^{r-1}-1}{2k\choose k} \pmod{p^{2r}}, \label{eq:pf-66}
\end{align}
and these expectations were recently confirmed by Zhang and Pan \cite{ZP20}. It is easy to see that \eqref{eq:conj8-1} and \eqref{eq:conj8-2}
are $q$-analogues of \eqref{eq:pf-55} and \eqref{eq:pf-66} modulo $p^{r+1}$. Complete $q$-analogues of \eqref{eq:pf-55} and \eqref{eq:pf-66}
are still not known.

\section{Open problems and concluding remarks}

Liu \cite{Liu0} established the following generalization of \eqref{eq:h2}: for any odd prime $p$ and positive integer $m$,
\begin{equation}
\sum_{k=0}^{mp-1} \frac{(\frac{1}{2})_k^3}{k!^3}
\equiv
\begin{cases} -\displaystyle \Gamma_p(\tfrac{1}{4})^4 \sum_{k=0}^{m-1} \frac{(\frac{1}{2})_k^3}{k!^3} \pmod{p^2}, &\text{if $p\equiv 1\pmod 4$,}\\[5pt]
 0\pmod{p^2}, &\text{if $p\equiv 3\pmod 4$,}
\end{cases} \label{eq:h2-liu}
\end{equation}
Recently, the author \cite{Guo-rima} gave a $q$-analogue of the second case of \eqref{eq:h2-liu}:
for positive integers $m$ and $n$ with $n\equiv 3\pmod{4}$,
\begin{align}
\sum_{k=0}^{mn-1}\frac{(q;q^2)_k^2(q^2;q^4)_k}{(q^2;q^2)_k^2(q^4;q^4)_k}q^{2k}
\equiv 0 \pmod{\Phi_n(q)^2},  \label{eq:old-1}
\end{align}
which were previously conjectured by the author and Zudilin \cite{GuoZu2}.

We find the following $q$-analogue of the first case of \eqref{eq:h2-liu}, which is also a refinement of Lemma \ref{lem:2}.

\begin{conjecture}\label{conj:1}
Let $m$ and $n$ be positive integers with $n\equiv 1\pmod{4}$ and $n>1$.
Then, modulo $\Phi_n(q)^2$,
\begin{align}
\sum_{k=0}^{mn-1}\frac{(q;q^2)_k^2(q^2;q^4)_k}{(q^2;q^2)_k^2(q^4;q^4)_k}q^{2k}
&\equiv [n]\frac{(q^3;q^4)_{(n-1)/2}}{(q^5;q^4)_{(n-1)/2}}\sum_{k=0}^{m-1}\frac{(\frac{1}{2})_k^3}{k!^3}.  \label{eq:conj-1}
\end{align}
\end{conjecture}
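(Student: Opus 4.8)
The plan is to upgrade the proof of Lemma~\ref{lem:2} from the modulus $\Phi_n(q)$ to $\Phi_n(q)^2$. Writing $T_k=\frac{(q;q^2)_k^2(q^2;q^4)_k}{(q^2;q^2)_k^2(q^4;q^4)_k}q^{2k}$ and splitting the truncation into blocks $k=jn+s$ with $0\le s\le n-1$ exactly as in Section~2, the whole statement reduces to the single-block congruence that, modulo $\Phi_n(q)^2$,
$$
\sum_{s=0}^{n-1}T_{jn+s}\equiv \frac{(\tfrac12)_j^3}{j!^3}\,[n]\frac{(q^3;q^4)_{(n-1)/2}}{(q^5;q^4)_{(n-1)/2}}\qquad(j\ge 0);
$$
summing over $0\le j\le m-1$ then produces the right-hand side of \eqref{eq:conj-1}. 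The case $j=0$ is exactly \eqref{eq:mod-phi} combined with \eqref{eq:equiv}, so the content lies entirely in the general block.

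First I would do the divisibility bookkeeping: for $(n+1)/2\le s\le n-1$ the numerator $(q;q^2)_{jn+s}^2(q^2;q^4)_{jn+s}$ acquires additional powers of $\Phi_n(q)$ relative to the denominator, so those terms vanish modulo $\Phi_n(q)^2$ and may be discarded; one is left with $0\le s\le (n-1)/2$, where both $T_s$ and $T_{jn+s}$ are $\Phi_n(q)$-units and the ratio $T_{jn+s}/T_s$ is a unit as well. For this unit I would establish \emph{second-order} refinements of the two ingredients of Lemma~\ref{lem:2}: a sharpening of the $q$-Lucas theorem of the shape ${2jn+2s\brack jn+s}\equiv \binom{2j}{j}{2s\brack s}\bigl(1+\Phi_n(q)\,c(j,s)\bigr)\pmod{\Phi_n(q)^2}$ (together with its $q\to q^2$ analogue), and a sharpening of Lemma~\ref{lem:1} of the shape $(-q;q)_{jn+s}\equiv 2^{j}(-q;q)_s\bigl(1+\Phi_n(q)\,e(j,s)\bigr)\pmod{\Phi_n(q)^2}$. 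Each correction is obtained by isolating the $j$ ``singular'' factors $1-q^{(\text{multiple of }n)}$ inside the long products and expanding $[c]_{q^n}=(1-q^{cn})/(1-q^n)$ one order beyond the relation $[c]_{q^n}\equiv c\pmod{\Phi_n(q)}$. Assembling these yields
$$
\frac{T_{jn+s}}{T_s}\equiv \frac{(\tfrac12)_j^3}{j!^3}\bigl(1+\Phi_n(q)\,\delta_j(s)\bigr)\pmod{\Phi_n(q)^2}
$$
with an explicit first-order correction $\delta_j(s)$, recovering the coefficient $\binom{2j}{j}^3/2^{6j}=(\tfrac12)_j^3/j!^3$ of Lemma~\ref{lem:2} at leading order.

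The crux, and the step I expect to be the main obstacle, is then to prove the vanishing of the weighted correction,
$$
\sum_{s=0}^{(n-1)/2}T_s\,\delta_j(s)\equiv 0\pmod{\Phi_n(q)},
$$
for this is precisely what makes $\Phi_n(q)\sum_s T_s\,\delta_j(s)$ disappear modulo $\Phi_n(q)^2$, leaving $\frac{(\frac12)_j^3}{j!^3}\sum_s T_s\equiv \frac{(\frac12)_j^3}{j!^3}\cdot[n]\frac{(q^3;q^4)_{(n-1)/2}}{(q^5;q^4)_{(n-1)/2}}$ by \eqref{eq:mod-phi} and \eqref{eq:equiv}. I would attack this either through a reflection symmetry $s\mapsto (n-1)/2-s$ of the single-period summand of \eqref{eq:mod-phi}, under which $\delta_j(s)$ should be (anti)symmetric while $T_s$ transforms in a controlled way, or by interpreting $\sum_s T_s\,\delta_j(s)$ as a $q$-logarithmic derivative of the base evaluation \eqref{eq:mod-phi}, so that its vanishing modulo $\Phi_n(q)$ follows from differentiating a known identity. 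Making $\delta_j(s)$ explicit enough to run either argument, and controlling it uniformly in $j$, is where the genuine difficulty lies; the alternative route via Guo--Zudilin's creative microscoping (deforming $T_k$ by a parameter $a$, proving the congruence modulo $(1-aq^n)(a-q^n)$, and letting $a\to1$) meets the same essential obstruction, namely controlling the deformed summand beyond a single period $0\le s\le (n-1)/2$.
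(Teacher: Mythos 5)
First, a point of orientation: the statement you are proving is Conjecture~\ref{conj:1} of the paper, which the paper itself does \emph{not} prove --- it is posed as an open problem, whose truth (together with Conjecture~\ref{conj:3}) would upgrade \eqref{eq:main-1} to the modulus $\prod_{j=1}^{r}\Phi_{n^j}(q)^2$. So there is no paper proof to compare against; the only question is whether your argument settles the conjecture, and it does not. Your structural reductions are sound as far as they go: the block decomposition $k=jn+s$ is legitimate (the congruence \eqref{eq:conj-1} for all $m$ is equivalent, by differencing in $m$, to the single-block statement for each $j$); the terms with $(n+1)/2\leqslant s\leqslant n-1$ do vanish modulo $\Phi_n(q)^2$ (indeed modulo $\Phi_n(q)^3$, since each of the three $q$-binomial factors in $T_{jn+s}$ contributes one factor of $\Phi_n(q)$); and for $0\leqslant s\leqslant (n-1)/2$ one may certainly write the unit $T_{jn+s}/T_s$ as $\frac{(1/2)_j^3}{j!^3}\bigl(1+\Phi_n(q)\,\delta_j(s)\bigr)$ modulo $\Phi_n(q)^2$ for some $\delta_j(s)$ regular at the zeros of $\Phi_n(q)$ --- this much is automatic once Lemma~\ref{lem:2} gives the leading coefficient $\binom{2j}{j}^3/2^{6j}=(\tfrac12)_j^3/j!^3$. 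But everything up to this point merely reproduces what the paper already knows: Lemma~\ref{lem:2} (the congruence modulo $\Phi_n(q)$) together with the $m=1$ case \eqref{eq:mod-phi} and \eqref{eq:equiv}.

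The genuine gap is the step you yourself flag as the crux: proving $\sum_{s=0}^{(n-1)/2}T_s\,\delta_j(s)\equiv 0\pmod{\Phi_n(q)}$ for every $j\geqslant 1$. This is not a technical verification that can be deferred; it is \emph{exactly equivalent} to the conjecture for the $j$-th block, so your proposal is a reformulation of the problem rather than a solution. Neither of the two suggested attacks is carried out, and neither is in workable shape. The reflection $s\mapsto (n-1)/2-s$ cannot force the vanishing by (anti)symmetry alone: the companion sum $\sum_{s}T_s$ is a $\Phi_n(q)$-unit for $n\equiv 1\pmod 4$ (its value $[n](q^3;q^4)_{(n-1)/2}/(q^5;q^4)_{(n-1)/2}$ is nonzero modulo $\Phi_n(q)$), so one needs genuinely explicit, $j$-uniform control of $\delta_j(s)$, which you never obtain --- your $c(j,s)$, $e(j,s)$, $\delta_j(s)$ remain symbols. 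As for the ``$q$-logarithmic derivative'' route, no identity or congruence in an auxiliary parameter is exhibited whose differentiation yields $\sum_s T_s\,\delta_j(s)$; differentiating the known congruence \eqref{eq:mod-phi} modulo $\Phi_n(q)^2$ does produce information modulo $\Phi_n(q)$, but no link between that derivative and your correction terms is established. Until the weighted-correction sum is actually shown to vanish, the statement remains what the paper says it is: a conjecture.
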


For $n\equiv 3\pmod{4}$, one sees that $[n](q^3;q^4)_{(n-1)/2}$ is divisible by $\Phi_n(q)^2$ while $(q^5;q^4)_{(n-1)/2}$ is coprime with $\Phi_n(q)$,
and so \eqref{eq:conj-1} reduces to \eqref{eq:old-1} in this case.  For any prime $p\equiv 1\pmod{4}$, we have
$$
p\frac{(\frac{3}{4})_{(p-1)/2}}{(\frac{5}{4})_{(p-1)/2}}
\equiv -\Gamma_p(\tfrac{1}{4})^4 \pmod{p^2}.
$$
Letting $n=p$ be a prime and taking the limits as $q\to 1$ in \eqref{eq:conj-1}, we are led to the first part of \eqref{eq:h2-liu}.

It is not difficult to verify that, for any positive odd integer $n$,
\begin{equation}
\frac{(\frac{1}{2})_k^3}{k!^3}\equiv \frac{(q^n;q^{2n})_k^2(q^{2n};q^{4n})_k}{(q^{2n};q^{2n})_k^2(q^{4n};q^{4n})_k}q^{2nk} \label{eq:reason}
\pmod{\Phi_n(q)^2}.
\end{equation}
Thus, the $q$-congruence can also be written as follows: modulo $\Phi_n(q)^2$,
\begin{align*}
\sum_{k=0}^{mn-1}\frac{(q;q^2)_k^2(q^2;q^4)_k}{(q^2;q^2)_k^2(q^4;q^4)_k}q^{2k}
&\equiv [n]\frac{(q^3;q^4)_{(n-1)/2}}{(q^5;q^4)_{(n-1)/2}}\sum_{k=0}^{m-1}\frac{(q^n;q^{2n})_k^2(q^{2n};q^{4n})_k}{(q^{2n};q^{2n})_k^2(q^{4n};q^{4n})_k}q^{2nk}.
\end{align*}

Recently, the author \cite{Guo-h2} gave another $q$-analogues of the second case of \eqref{eq:h2-liu}:
for positive integers $m$ and $n$ with $n\equiv 3\pmod{4}$,
\begin{align}
\sum_{k=0}^{mn-1}\frac{(1+q^{4k+1})(q^2;q^4)_k^3}{(1+q)(q^4;q^4)_k^3}q^k
\equiv 0 \pmod{\Phi_n(q)^2}, \label{eq:old-2}
\end{align}
which were previously conjectured by the author and Zudilin \cite{GuoZu2}.

We have the following different $q$-analogue of the first case of \eqref{eq:h2-liu}, which is also a refinement of \eqref{eq:diff}.

\begin{conjecture}\label{conj:2}
Let $m$ and $n$ be positive integers with $n\equiv 1\pmod{4}$ and $n>1$.
Then, modulo $\Phi_n(q)^2$,
\begin{align}
\sum_{k=0}^{mn-1}\frac{(1+q^{4k+1})(q^2;q^4)_k^3}{(1+q)(q^4;q^4)_k^3}q^{k}
&\equiv \dfrac{[n]_{q^2}(q^3;q^4)_{(n-1)/2}}{(q^5;q^4)_{(n-1)/2}}q^{(1-n)/2} \sum_{k=0}^{m-1}\frac{(\frac{1}{2})_k^3}{k!^3}. \label{eq:conj-3}
\end{align}
\end{conjecture}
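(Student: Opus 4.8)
The plan is to prove Conjecture \ref{conj:2} by first discarding the terms that are forced to vanish to high $\Phi_n(q)$-order, and then upgrading the block factorization underlying \eqref{eq:diff} from modulus $\Phi_n(q)$ to $\Phi_n(q)^2$. Write the summand as $c(k)=\frac{(1+q^{4k+1})(q^2;q^4)_k^3}{(1+q)(q^4;q^4)_k^3}q^{k}$ and split $k=rn+s$ with $0\leqslant s\leqslant n-1$. Since $n$ is odd, neither $1+q$ nor $1+q^{4k+1}$ is divisible by $\Phi_n(q)$, so the $\Phi_n(q)$-adic valuation of $c(rn+s)$ equals $3\,v\big((q^2;q^4)_{rn+s}\big)-3\,v\big((q^4;q^4)_{rn+s}\big)$. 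A direct count of the factors $1-q^{4j+2}$ (resp.\ $1-q^{4j+4}$) divisible by $\Phi_n(q)$, using $\gcd(4,n)=1$, gives $v\big((q^4;q^4)_{rn+s}\big)=r$ for every $s$, while $v\big((q^2;q^4)_{rn+s}\big)=r$ for $0\leqslant s\leqslant (n-1)/2$ and $=r+1$ for $(n+1)/2\leqslant s\leqslant n-1$; the extra factor is $1-q^{2n(2r+1)}$, coming from $4j+2\equiv 0\pmod n$ at $j=(n-1)/2+rn$. Hence $c(rn+s)\equiv 0\pmod{\Phi_n(q)^3}$ whenever $s\geqslant (n+1)/2$, so that, modulo $\Phi_n(q)^2$,
\[
\sum_{k=0}^{mn-1}c(k)\equiv\sum_{r=0}^{m-1}\sum_{s=0}^{(n-1)/2}c(rn+s).
\]

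The block $r=0$ is handled by the $n\equiv 1$ case of \eqref{eq:mod-phi-cases}, which gives $\sum_{s=0}^{(n-1)/2}c(s)\equiv \frac{[n]_{q^2}(q^3;q^4)_{(n-1)/2}}{(q^5;q^4)_{(n-1)/2}}q^{(1-n)/2}\pmod{\Phi_n(q)^2}$, exactly the prefactor on the right of \eqref{eq:conj-3}. Since each $c(s)$ with $0\leqslant s\leqslant(n-1)/2$ has $\Phi_n(q)$-valuation $0$, it remains to establish the block factorization
\[
\sum_{s=0}^{(n-1)/2}c(rn+s)\equiv \frac{(\tfrac12)_r^3}{r!^3}\sum_{s=0}^{(n-1)/2}c(s)\pmod{\Phi_n(q)^2}\qquad(0\leqslant r\leqslant m-1),
\]
whose reduction modulo $\Phi_n(q)$ is the content of \eqref{eq:diff} and follows from the $q$-Lucas theorem. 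To upgrade it I would use the creative-microscoping method of \cite{GuoZu3}: introduce a parameter $a$ and set $c_a(k)=\frac{(1+q^{4k+1})(q^2;q^4)_k(aq^2;q^4)_k(q^2/a;q^4)_k}{(1+q)(q^4;q^4)_k(aq^4;q^4)_k(q^4/a;q^4)_k}q^{k}$, so that $c_1(k)=c(k)$. As $(1-aq^n)(a-q^n)\big|_{a=1}=(1-q^n)^2$ is divisible by $\Phi_n(q)^2$, it suffices to prove the parametric factorization modulo $(1-aq^n)(a-q^n)$ and then let $a\to1$. One proves it modulo the two coprime factors separately: the substitutions $a=q^{\pm n}$ make one numerator chain terminate beyond a fixed index, collapsing each inner sum through a very-well-poised $q$-summation of the type used for the $m=1$ case in \cite{GuoZu3}, after which the Chinese Remainder Theorem combines the two evaluations.

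The main obstacle is precisely the step with no counterpart in the proof of \eqref{eq:diff}: controlling the first-order $\Phi_n(q)$-term of the block ratio $c(rn+s)/c(s)$. Expanding $q^{4tn}\equiv 1+4t(q^n-1)\pmod{\Phi_n(q)^2}$ throughout the three shifted-factorial chains, the prefactor $1+q^{4(rn+s)+1}$, and the monomial $q^{rn}$, each contribution carries an $s$-dependent linear term in $q^n-1$, and the claim is that these cancel so the ratio is congruent to the \emph{constant} $(\tfrac12)_r^3/r!^3$ modulo $\Phi_n(q)^2$. The $q$-Lucas theorem secures only the zeroth-order statement, so a genuinely new input is required to force the first-order cancellation. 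I expect the delicate part to be verifying the two specialized ($a=q^{\pm n}$) summations and checking that their $a\to1$ limit reproduces exactly the classical factor $(\tfrac12)_r^3/r!^3$ with no residual $q^n-1$ term; the valuation reduction and the CRT step are routine.

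As a consistency check and a possible alternative route, one may rewrite the target factor by means of \eqref{eq:reason}, replacing $\frac{(\frac12)_r^3}{r!^3}$ with its $q^n$-shifted companion $\frac{(q^n;q^{2n})_r^2(q^{2n};q^{4n})_r}{(q^{2n};q^{2n})_r^2(q^{4n};q^{4n})_r}q^{2nr}$. The resulting self-similar identity, relating $c(rn+s)$ to the $q^n$-analogue evaluated at $r$ times $c(s)$, can in principle be attacked by telescoping the first-order terms directly, and its proof would run in perfect parallel with that of Conjecture \ref{conj:1}.
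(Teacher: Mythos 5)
The statement you are proving is Conjecture~\ref{conj:2} of the paper: it is left \emph{open} there (the author only observes that it, together with Conjecture~\ref{conj:3}, would imply the stronger form of Theorem~\ref{thm:main-1}), so there is no proof to compare against and your argument must stand on its own. It does not. Your preliminary reductions are correct and genuinely useful: the valuation count is right (the only index producing a $\Phi_n(q)$-divisible factor in $(q^2;q^4)_k$ is $j\equiv(n-1)/2\pmod n$, against $j\equiv 0\pmod n$ in $(q^4;q^4)_k$, so $c(rn+s)\equiv 0\pmod{\Phi_n(q)^3}$ for $(n+1)/2\leqslant s\leqslant n-1$), and the $r=0$ block is exactly \eqref{eq:mod-phi-cases}. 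But after these reductions the whole content of the conjecture is concentrated in the block factorization $\sum_{s=0}^{(n-1)/2}c(rn+s)\equiv\frac{(\frac{1}{2})_r^3}{r!^3}\sum_{s=0}^{(n-1)/2}c(s)\pmod{\Phi_n(q)^2}$, which you do not prove: \eqref{eq:diff} and the $q$-Lucas theorem give it only modulo $\Phi_n(q)$, and you yourself write that ``a genuinely new input is required'' for the first-order term. Deferring that step is deferring the conjecture.

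Worse, the creative-microscoping route you propose for that step fails concretely. First, for this summand the chains $(aq^2;q^4)_k$ and $(q^2/a;q^4)_k$ do not terminate at $a=q^{\pm n}$ (one would need $4j=n-2$, impossible for $n\equiv 1\pmod 4$); termination occurs at $a=q^{\mp 2n}$, where $2-2n+4j=0$ has the solution $j=(n-1)/2$, so the relevant modulus is $(1-aq^{2n})(a-q^{2n})$. Second, and fatally, at $a=q^{\pm 2n}$ \emph{every} term $c_a(rn+s)$ with $r\geqslant 1$ contains the vanishing factor $1-q^{0}$, so each block sum with $r\geqslant 1$ is identically zero there, whereas $\sum_{s=0}^{(n-1)/2}c_a(s)$ evaluates at $a=q^{\pm2n}$ to a nonzero closed form (this evaluation is precisely the terminating very-well-poised summation underlying \cite{GuoZu3}). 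Since $(\frac{1}{2})_r^3/r!^3\neq 0$, the parametric factorization you want to prove modulo $(1-aq^{2n})(a-q^{2n})$ is \emph{false} for every $r\geqslant 1$: its two sides disagree at $a=q^{2n}$. Any correct parametric statement must carry an $a$-dependent factor on the right-hand side, of the shape $(aq^{2n};q^{4n})_r(q^{2n}/a;q^{4n})_r/\cdots$, vanishing at the specialization points --- this is exactly the mechanism of \cite{GuoZu4} --- and finding a provable statement of that shape is the actual open problem. Your fallback via \eqref{eq:reason} does not repair this, because the factor $\frac{(q^n;q^{2n})_r^2(q^{2n};q^{4n})_r}{(q^{2n};q^{2n})_r^2(q^{4n};q^{4n})_r}q^{2nr}$ is likewise independent of $a$.
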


Likewise, the $q$-congruence \eqref{eq:conj-3} has the following equivalent form:
modulo $\Phi_n(q)^2$,
\begin{align*}
\sum_{k=0}^{mn-1}\frac{(1+q^{4k+1})(q^2;q^4)_k^3}{(1+q)(q^4;q^4)_k^3}q^{k}
&\equiv \dfrac{[n]_{q^2}(q^3;q^4)_{(n-1)/2}}{(q^5;q^4)_{(n-1)/2}}q^{(1-n)/2} \sum_{k=0}^{m-1}\frac{(1+q^{n(4k+1)})(q^{2n};q^{4n})_k^3}{(1+q^n)(q^{4n};q^{4n})_k^3}q^{nk}.
\end{align*}

We believe that Lemma \ref{lem:3} can be strengthened as follows.
\begin{conjecture}\label{conj:3}
The $q$-congruence \eqref{eq:lem3} holds modulo $\Phi_{n^s}(q)^2$.
\end{conjecture}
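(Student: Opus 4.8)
The plan is to sharpen the proof of Lemma \ref{lem:3} rather than redo it: every step there already holds modulo $\Phi_{n^s}(q)^2$ \emph{except} the reduction of the ``tail'' quotient, which was only carried out modulo $\Phi_{n^s}(q)$. Indeed, passing from \eqref{eq:prod-mod} to \eqref{eq:pf-lem} uses only the pairing congruence $(1-q^{m-n^r})(1-q^{m+n^r})\equiv(1-q^m)^2\pmod{\Phi_{n^s}(q)^2}$ (valid since $n^s\mid n^r$), and the final block of that proof, reducing $\frac{(q^2;q^4)_{(n^s-1)/4}^2(q^{4n};q^{4n})_{(n^{s-1}-1)/4}^2}{(q^4;q^4)_{(n^s-1)/4}^2(q^{2n};q^{4n})_{(n^{s-1}-1)/4}^2}q^{(n-1)/2}$ to the right-hand side of \eqref{eq:lem3}, is the same pairing congruence in reverse at level $s$ and so also holds modulo $\Phi_{n^s}(q)^2$. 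Thus it remains to show that the tail factor $T$, whose square appears in \eqref{eq:pf-lem}, satisfies $T\equiv1\pmod{\Phi_{n^s}(q)^2}$ (this clearly forces $T^2\equiv1$). The key observation is that, once the $q^{4n}$-progression factors are absorbed, $T=N/D$ with
\[
N=\prod_{a\in\mathcal A}(1-q^a),\qquad D=\prod_{b\in\mathcal B}(1-q^b),
\]
where $\mathcal A=\{a\equiv2\ (\mathrm{mod}\ 4):\ n^s<a<n^r,\ n\nmid a\}$ and $\mathcal B=\{b\equiv0\ (\mathrm{mod}\ 4):\ n^s<b<n^r,\ n\nmid b\}$; in particular $N,D\in\mathbb Z[q]$ are coprime to $\Phi_{n^s}(q)$, so no poles intervene.

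Since $\Phi_{n^s}(q)$ is squarefree with roots the primitive $n^s$-th roots of unity, I would prove $N\equiv D\pmod{\Phi_{n^s}(q)^2}$ by the double-root criterion: it suffices that every primitive $n^s$-th root of unity $\zeta$ satisfy $N(\zeta)=D(\zeta)$ and $N'(\zeta)=D'(\zeta)$. First I would record the bijection $\sigma\colon a\mapsto n^s+n^r-a$ from $\mathcal A$ onto $\mathcal B$: it preserves the condition $n\nmid a$ because $n\mid n^s+n^r$, and it satisfies $\zeta^{\sigma(a)}=\zeta^{-a}$ since $\zeta^{n^s}=\zeta^{n^r}=1$. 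Hence $D(\zeta)=\prod_{a\in\mathcal A}(1-\zeta^{-a})=(-1)^{|\mathcal A|}\zeta^{-\sum_{a}a}\,N(\zeta)$, and because $|\mathcal A|=\tfrac{n-1}{4}\,n^{s-1}(n^{r-s}-1)$ is even and $n^s\mid\sum_{a\in\mathcal A}a$, this yields $N(\zeta)=D(\zeta)$ (recovering the modulo $\Phi_{n^s}(q)$ content of Lemma \ref{lem:3}).

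For the derivative equality I would pass to logarithmic derivatives: since $N(\zeta)=D(\zeta)\neq0$, one has $N'(\zeta)=D'(\zeta)$ iff $\sum_{a\in\mathcal A}\frac{-a\zeta^{a-1}}{1-\zeta^a}=\sum_{b\in\mathcal B}\frac{-b\zeta^{b-1}}{1-\zeta^b}$. Rewriting the right-hand sum over $\mathcal A$ through $\sigma$ and $\zeta^{\sigma(a)}=\zeta^{-a}$, a short simplification (using $\tfrac{\zeta^{-a-1}}{1-\zeta^{-a}}=\tfrac{-\zeta^{-1}}{1-\zeta^a}$) collapses the difference of the two sides to $\zeta^{-1}\bigl(\sum_{a}a-(n^s+n^r)\sum_{a}\tfrac1{1-\zeta^a}\bigr)$. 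Thus the derivative condition is equivalent to the numerical identity
\[
\sum_{a\in\mathcal A}a=(n^s+n^r)\sum_{a\in\mathcal A}\frac{1}{1-\zeta^a}.\qquad(\star)
\]

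Finally I would verify $(\star)$ by direct evaluation, and this is the step I expect to be the main obstacle. As $a$ runs over $\{a\equiv2\ (\mathrm{mod}\ 4):n^s<a<n^r\}$, the index set has cardinality $(n^r-n^s)/4=n^s\cdot\tfrac{n^{r-s}-1}{4}$, a multiple of $n^s$ because $n\equiv1\pmod4$; since $\gcd(4,n^s)=1$, each residue class modulo $n^s$ occurs exactly $\tfrac{n^{r-s}-1}{4}$ times, and the condition $n\nmid a$ depends only on $a\bmod n^s$. Combining this with the classical sum $\sum_{j=1}^{N-1}(1-\zeta^j)^{-1}=\tfrac{N-1}{2}$ for a primitive $N$-th root of unity (applied for $N=n^s$ and, after removing the multiples of $n$ via $j=nj'$ and $\zeta^{n}$ a primitive $n^{s-1}$-th root, for $N=n^{s-1}$) gives
\[
\sum_{a\in\mathcal A}\frac{1}{1-\zeta^a}=\frac{n^{r-s}-1}{4}\cdot\frac{n^{s-1}(n-1)}{2},
\qquad\text{so}\qquad
(n^s+n^r)\sum_{a\in\mathcal A}\frac{1}{1-\zeta^a}=\frac{(n-1)\,n^{2s-1}\,(n^{2(r-s)}-1)}{8}.
\]
On the other hand, summing the two arithmetic progressions (over $\mathcal A$ and over its multiples-of-$n$ part, the latter rescaling by $n$ to the level $s-1,\,r-1$ data) gives $\sum_{a\in\mathcal A}a=\frac{(n-1)\,n^{2s-1}\,(n^{2(r-s)}-1)}{8}$ as well, establishing $(\star)$. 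The delicate point throughout is the bookkeeping that makes the residue counts and the excluded multiples-of-$n$ terms cancel exactly in the root-of-unity sum; it is precisely here that $n\equiv1\pmod4$ is used, both to make $(n^r-n^s)/4$ a multiple of $n^s$ and to make $|\mathcal A|$ even.
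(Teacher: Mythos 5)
Your proposal addresses a statement that the paper does \emph{not} prove: Conjecture \ref{conj:3} is left open there, and the paper's only remark about it is that it would follow from Conjecture \ref{conj:4} (itself open) with $q\to q^2$. So there is no proof in the paper to compare against; what you have written is genuinely new content, and after checking it I believe it is correct. Your reduction is exactly the reduction the paper has in mind: the pairing steps from \eqref{eq:prod-mod} to \eqref{eq:pf-lem}, and the reverse pairing at level $s$, are already asserted modulo $\Phi_{n^s}(q)^2$ in the proof of Lemma \ref{lem:3}, and the only step done merely modulo $\Phi_{n^s}(q)$ is the tail quotient; your statement $N\equiv D\pmod{\Phi_{n^s}(q)^2}$ is precisely the $q\to q^2$ form of Conjecture \ref{conj:4}, so your double-root argument in effect proves that conjecture in the form needed here. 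I verified the substance: the identification of the tail as $\prod_{a\in\mathcal A}(1-q^a)\big/\prod_{b\in\mathcal B}(1-q^b)$ with both products coprime to $\Phi_{n^s}(q)$; the involution $\sigma(a)=n^s+n^r-a$ and the parity/divisibility facts ($|\mathcal A|=\tfrac{(n-1)n^{s-1}(n^{r-s}-1)}{4}$ is even, $n^s\mid\sum_a a$); the collapse of $N'/N-D'/D$ at $\zeta$ to $\zeta^{-1}\bigl(\sum_a a-(n^s+n^r)\sum_a(1-\zeta^a)^{-1}\bigr)$; the equidistribution of the arithmetic progression modulo $n^s$ (using $4\mid n^{r-s}-1$, i.e. $n\equiv1\pmod 4$); and the evaluation of both sides of $(\star)$ as $\tfrac{(n-1)n^{2s-1}(n^{2(r-s)}-1)}{8}$. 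A numerical spot-check at $n=5$, $s=1$, $r=2$ (where $\mathcal A=\{6,14,18,22\}$, $\mathcal B=\{8,12,16,24\}$, both sides of $(\star)$ equal $60$) confirms that the primitive fifth roots of unity are double roots of $N-D$.

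One point you should make explicit in a final write-up, since it is glossed over both by you and by the paper: the intermediate fractions in \eqref{eq:prod-mod} and \eqref{eq:pf-lem} have numerators \emph{and} denominators divisible by $\Phi_{n^s}(q)$, and congruences cannot in general be multiplied or divided in that situation. The step is nevertheless legitimate because the paired factors $(1-q^{m-n^r})(1-q^{m+n^r})$ with $n^s\mid m$ occur \emph{identically} in the numerator and the denominator of \eqref{eq:prod-mod} (one copy coming from the $q^4$-progressions, the other from the $q^{4n}$-progressions), hence cancel exactly; after this cancellation every remaining factor is coprime to $\Phi_{n^s}(q)$ and the pairing congruences can be combined freely. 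With that observation added, your argument is a complete proof of Conjecture \ref{conj:3}, and essentially of Conjecture \ref{conj:4} as well, which (as noted in Section 6) also upgrades Theorem \ref{thm:main-1} to the modulus $\prod_{j=1}^r\Phi_{n^j}(q)^2$ conditionally on Conjecture \ref{conj:1}.
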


By the proof of Lemma \ref{lem:3}, we know that the above conjecture can be easily derived from the following conjectural $q$-congruence with $q\to q^2$.
\begin{conjecture}\label{conj:4}
Let $n\equiv 1\pmod{4}$ be an integer greater than $1$ and let $r,s$ be positive integers with $r>s$. Then, modulo $\Phi_{n^s}(q)^2$,
\begin{align*}
\frac{(q;q^2)_{(n^r-1)/4} (q^{2n};q^{2n})_{(n^{r-1}-1)/4} }{(q^2;q^2)_{(n^r-1)/4} (q^{n};q^{2n})_{(n^{r-1}-1)/4}}
\equiv \frac{(q;q^2)_{(n^s-1)/4} (q^{2n};q^{2n})_{(n^{s-1}-1)/4} }{(q^2;q^2)_{(n^s-1)/4} (q^{n};q^{2n})_{(n^{s-1}-1)/4}}.
\end{align*}
\end{conjecture}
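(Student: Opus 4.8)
The plan is to reduce the stated congruence to a statement about a single tail quotient and then to analyse that quotient modulo $\Phi_{n^s}(q)^2$ by a reflection argument that refines the reversal step in the proof of Lemma \ref{lem:3}. Write $L=(n^r-n^s)/4$, $M=(n^{r-1}-n^{s-1})/4$, and let $F(t)$ denote the left-hand side with $r$ replaced by $t$. Cancelling the common initial segments $(q;q^2)_{(n^s-1)/4}$, $(q^2;q^2)_{(n^s-1)/4}$, $(q^{2n};q^{2n})_{(n^{s-1}-1)/4}$, $(q^n;q^{2n})_{(n^{s-1}-1)/4}$ gives
\begin{equation*}
\frac{F(r)}{F(s)}=\frac{(q^{(n^s+1)/2};q^2)_{L}\,(q^{n(n^{s-1}+3)/2};q^{2n})_{M}}{(q^{(n^s+3)/2};q^2)_{L}\,(q^{n(n^{s-1}+1)/2};q^{2n})_{M}}.
\end{equation*}
As in the proof of Lemma \ref{lem:3}, the factors $1-q^{an}$ with $a$ odd occurring in the numerator $q^2$-product are exactly the $M$ factors of $(q^{n(n^{s-1}+1)/2};q^{2n})_{M}$, and the $n$-divisible factors of the denominator $q^2$-product are exactly $(q^{n(n^{s-1}+3)/2};q^{2n})_{M}$. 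After these cancellations the claim becomes
\begin{equation*}
\frac{\widetilde A}{\widetilde B}:=\frac{\prod_{e}(1-q^{e})}{\prod_{e'}(1-q^{e'})}\equiv 1\pmod{\Phi_{n^s}(q)^2},
\end{equation*}
where $e$ runs over the odd integers in $[(n^s+1)/2,(n^r-3)/2]$ with $n\nmid e$, and $e'$ over the even integers in $[(n^s+3)/2,(n^r-1)/2]$ with $n\nmid e'$.

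Next I would introduce the reflection $\sigma(e)=(n^s+n^r)/2-e$. Using $n\equiv1\pmod4$ one checks that $\sigma$ is an order-reversing bijection from the index set of $\widetilde A$ onto that of $\widetilde B$ (it sends odd to even because $(n^s+n^r)/2$ is odd, and preserves non-divisibility by $n$ because $(n^s+n^r)/2\equiv0\pmod n$), and that $e+\sigma(e)=n^s(1+n^{r-s})/2=:n^s w$ with $w$ an odd integer. Hence $\widetilde B=\prod_{e}(1-q^{n^s w-e})$. I would then work in the ring $\mathbb Z[q]/\Phi_{n^s}(q)^2$, where $\phi:=q^{n^s}-1$ satisfies $\phi^2=0$, and expand $q^{n^s w}=(1+\phi)^w\equiv 1+w\phi$, so that for each $e$
\begin{equation*}
1-q^{\sigma(e)}\equiv -q^{-e}\bigl[(1-q^{e})+w\phi\bigr]\pmod{\Phi_{n^s}(q)^2}.
\end{equation*}
The number of indices is $|\widetilde A|=(n-1)(n^{r-1}-n^{s-1})/4$, which is even, so the accumulated sign is $+1$. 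Taking the product and expanding to first order in $\phi$ yields
\begin{equation*}
\frac{\widetilde A}{\widetilde B}\equiv q^{\Sigma}\Bigl(1-w\phi\sum_{e}\frac{1}{1-q^{e}}\Bigr)\pmod{\Phi_{n^s}(q)^2},\qquad \Sigma:=\sum_{e}e.
\end{equation*}

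It then remains to verify two congruences. The zeroth-order one is $\Sigma\equiv0\pmod{n^s}$, whence $q^{\Sigma}=(1+\phi)^{\Sigma/n^s}\equiv1+(\Sigma/n^s)\phi$; this merely recovers the assertion modulo $\Phi_{n^s}(q)$ already implicit in Lemma \ref{lem:3} (there the reversal produced a power of $q$ with a factor $n^r-n^s$, hence divisible by $n^s$), and can be established by the same counting bookkeeping. The genuinely new content is the first-order congruence
\begin{equation*}
\frac{\Sigma}{n^s}\equiv w\sum_{e}\frac{1}{1-q^{e}}\pmod{\Phi_{n^s}(q)},
\end{equation*}
which forces the coefficient of $\phi$ to vanish and delivers $\widetilde A/\widetilde B\equiv1$. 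I expect this identity to be the main obstacle: it equates an explicit rational constant with a sum of reciprocals $1/(1-q^{e})$, each of which depends only on $e\bmod n^s$ modulo $\Phi_{n^s}(q)$, taken over the odd non-multiples of $n$ in the tail range. The natural line of attack is to group the indices $e$ into residue classes modulo $n^s$ (over which the reciprocal sum has a closed form modulo $\Phi_{n^s}(q)$) and to match each class against its $\sigma$-image; the difficulty is that the tail range does not split into complete periods, so controlling the boundary contribution together with the $n\nmid e$ restriction—sharply enough to get the exact mod $\Phi_{n^s}(q)$ equality rather than the weaker mod $\Phi_{n^s}(q)$ information that suffices in Lemma \ref{lem:3}—is precisely the delicate point.
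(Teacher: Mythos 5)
First, a point of comparison you could not have known: the paper does \emph{not} prove this statement. It is posed as an open problem (Conjecture \ref{conj:4}), recorded because Conjecture \ref{conj:3}, and hence the strengthening of Lemma \ref{lem:3}, would follow from it via $q\to q^2$. So there is no proof of record to measure you against; the only question is whether your argument stands on its own. As submitted, it does not: the cancellation to the tail quotient, the identification of the $n$-divisible factors, the reflection $\sigma(e)=(n^s+n^r)/2-e$, the parity/sign bookkeeping, and the expansion in $\phi=q^{n^s}-1$ (note $\phi^2=(q-1)^2\Phi_{n^s}(q)^2\cdot(\cdots)\equiv 0$, so this is legitimate) are all correct, but you stop at the first-order congruence $\Sigma/n^s\equiv w\sum_e 1/(1-q^e)\pmod{\Phi_{n^s}(q)}$ and declare it the unresolved main obstacle. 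That is a genuine gap: you have transformed the conjecture into an equivalent unproven statement, not established it.

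The irony is that the obstacle is illusory, and the reason you give for it --- ``the tail range does not split into complete periods'' --- is false. The index set of $\widetilde A$ is the arithmetic progression $(n^s+1)/2+2j$, $0\leqslant j\leqslant L-1$, with the multiples of $n$ deleted, and
\begin{equation*}
L=\frac{n^r-n^s}{4}=n^s\cdot\frac{n^{r-s}-1}{4},\qquad \frac{n^{r-s}-1}{4}\in\mathbb{Z}
\end{equation*}
precisely because $n\equiv 1\pmod 4$. Since the step $2$ is invertible modulo $n^s$, the progression covers \emph{every} residue class modulo $n^s$ exactly $(n^{r-s}-1)/4$ times, and deleting the multiples of $n$ deletes exactly all occurrences of the $n$-divisible classes (a term is divisible by $n$ if and only if its residue mod $n^s$ is). Hence, modulo $\Phi_{n^s}(q)$,
\begin{equation*}
\sum_{e}\frac{1}{1-q^{e}}
\equiv\frac{n^{r-s}-1}{4}\sum_{\substack{c=1\\ n\nmid c}}^{n^{s}-1}\frac{1}{1-q^{c}}
\equiv\frac{n^{r-s}-1}{4}\left(\frac{n^{s}-1}{2}-\frac{n^{s-1}-1}{2}\right)
=\frac{n^{s-1}(n-1)(n^{r-s}-1)}{8},
\end{equation*}
where both inner sums are evaluated by the classical pairing $c\leftrightarrow n^{s}-c$ (each pair of reciprocals sums to $1$ modulo $\Phi_{n^s}(q)$, using $q^{n^s}\equiv 1$; for the $n$-divisible classes write $c=nb$ and pair $b\leftrightarrow n^{s-1}-b$). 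On the other side, summing the full progression and subtracting the deleted terms (whose total is $L(n^{s-1}+n^{r-1}-2)/4$ since $nM=L$) gives
\begin{equation*}
\Sigma=\frac{L(n-1)(n^{s-1}+n^{r-1})}{4},\qquad
\frac{\Sigma}{n^{s}}=\frac{n^{s-1}(n-1)\bigl(n^{2(r-s)}-1\bigr)}{16}
=\frac{1+n^{r-s}}{2}\cdot\frac{n^{s-1}(n-1)(n^{r-s}-1)}{8},
\end{equation*}
which is exactly $w\sum_e 1/(1-q^e)$ with $w=(1+n^{r-s})/2$, as an identity of rational numbers. So the coefficient of $\phi$ vanishes identically, $\widetilde A\equiv\widetilde B\pmod{\Phi_{n^s}(q)^2}$, and the conjecture follows. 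In short: your method works, and completing this one elementary computation would actually settle the paper's open conjecture; but the proposal as written stops exactly at the step that carries all the content, so it is not yet a proof.
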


We point out that if Conjectures \ref{conj:1} and \ref{conj:3} are true, then we can prove that \eqref{eq:main-1} holds modulo $\prod_{j=1}^{r}\Phi_{n^j}(q)^2$,
which was conjectured by the author \cite[Conjecture 6.3]{Guo-rima}. Similarly, if Conjectures \ref{conj:2} and \ref{conj:3} are confirmed, then we
can conclude that \eqref{q-a3} holds modulo $\prod_{j=1}^{r}\Phi_{n^j}(q)^2$, as already conjectured by the author and Zudilin \cite[Conjecture 4.3]{GuoZu4}.
Both \cite[Conjecture 6.3]{Guo-rima} and \cite[Conjecture 4.3]{GuoZu4} might be the best ways to prove the truth of \eqref{eq:dis-1} and \eqref{eq:dis-2}
modulo $p^{2r}$.

Although we are unable to prove some interesting special cases of \cite[Conjetures 4.1, 4.2, 4.5, 4.6]{GuoZu4}, we shall give the
following simplified versions of them.

\begin{conjecture}\label{conj:5}
Let $m$ and $n$ be positive integers with $n\equiv 1\pmod{4}$.
Then, modulo $\Phi_n(q)^3$,
\begin{align}
\sum_{k=0}^{mn-1}(-1)^k[4k+1]\frac{(q;q^2)_k^4(q^2;q^4)_k}{(q^2;q^2)_k^4(q^4;q^4)_k}q^{k}
&\equiv [n]\frac{(q^2;q^4)_{(n-1)/4}^2}{(q^4;q^4)_{(n-1)/4}^2}\sum_{k=0}^{m-1}(-1)^k(4k+1)\frac{(\frac{1}{2})_k^5}{k!^5}. \label{eq:conj-5}
\end{align}
\end{conjecture}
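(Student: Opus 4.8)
\textbf{Proof proposal for Conjecture~\ref{conj:5}.}

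The plan is to mirror the architecture used for Lemma~\ref{lem:2} and its refinement in Conjecture~\ref{conj:1}, but now working modulo $\Phi_n(q)^3$ rather than $\Phi_n(q)^2$. The starting point would be a known $q$-supercongruence of the $r=1$, single-block type for the left-hand summand: namely an evaluation, modulo $\Phi_n(q)^3$, of
\begin{equation*}
\sum_{k=0}^{(n-1)/2}(-1)^k[4k+1]\frac{(q;q^2)_k^4(q^2;q^4)_k}{(q^2;q^2)_k^4(q^4;q^4)_k}q^{k}
\equiv [n]\frac{(q^2;q^4)_{(n-1)/4}^2}{(q^4;q^4)_{(n-1)/4}^2},
\end{equation*}
valid for $n\equiv1\pmod4$; this is the $m=1$ instance and should already be available in the literature (it is the sharpened $\Phi_n(q)^3$ companion to \eqref{eq:mod-phi}). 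First I would split the full sum $\sum_{k=0}^{mn-1}$ into $m$ consecutive blocks indexed by $r=0,1,\dots,m-1$, writing $k=rn+s$ with $0\leqslant s\leqslant n-1$, exactly as in the proof of Lemma~\ref{lem:2}.

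The core of the argument is to analyze one block modulo $\Phi_n(q)^3$. For each fixed $r$, I would express the $q$-shifted factorials $(q;q^2)_{rn+s}$, $(q^2;q^2)_{rn+s}$, etc., in terms of their values at $s$ together with a ``bulk'' factor coming from the first $rn$ terms. The $q$-Lucas theorem (together with Lemma~\ref{lem:1}) handles the reduction of the $q$-binomial parts modulo $\Phi_n(q)$, but to reach $\Phi_n(q)^3$ one needs the next two orders in the $\Phi_n(q)$-adic expansion of each block. Concretely, I would factor each $q$-shifted factorial $(aq^{b};q^{c})_{rn+s}$ as a product over complete residue cycles of length contributing a power of $\Phi_n(q)$, extract the leading constant (which produces the binomial term $(\frac12)_k^5/k!^5$ after $q\to1$), and control the first- and second-order corrections. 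The factor $(-1)^k[4k+1]$ must be tracked carefully: $[4(rn+s)+1]=[4s+1]$ modulo $\Phi_n(q)$, but its higher-order behaviour contributes to the modulus $\Phi_n(q)^3$ and interacts with the sign $(-1)^{rn+s}=(-1)^s$ for odd $n$.

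The main obstacle I anticipate is precisely this passage from one to three orders of $\Phi_n(q)$. In Lemma~\ref{lem:2} only the leading $q$-Lucas reduction is needed, so the cancellations are automatic; here one must show that the first-order correction vanishes (so that the block decouples as a clean product of the $s=0,\dots,n-1$ evaluation and the bulk $r$-dependent factor) and that the second-order term reassembles into the prefactor $[n](q^2;q^4)_{(n-1)/4}^2/(q^4;q^4)_{(n-1)/4}^2$ times the summand $(-1)^k(4k+1)(\frac12)_k^5/k!^5$. I expect that a creative-microscoping or parameter-augmentation argument — introducing an extra parameter $a$, proving the identity modulo $(1-aq^n)(a-q^n)$ and a further $\Phi_n(q)$ factor, then specializing $a=1$ — would be the cleanest route, rather than a direct $\Phi_n(q)$-adic expansion. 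Once the single-block statement is secured modulo $\Phi_n(q)^3$, summing over the $m$ blocks and identifying the resulting sum with $\sum_{k=0}^{m-1}(-1)^k(4k+1)(\frac12)_k^5/k!^5$ follows as in Lemma~\ref{lem:2}, since the bulk factor of block $r$ contributes the term indexed by $r$ in the right-hand sum.
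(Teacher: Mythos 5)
There is a genuine gap, and it is worth being clear about its nature: the statement you are trying to prove is left as an open \emph{conjecture} in the paper (it is Conjecture~\ref{conj:5}; the paper proves nothing beyond citing that the $m=1$ case is known from \cite{Guo-racsam} and remarking that, together with Conjecture~\ref{conj:4}, it would imply Conjecture 4.1 of \cite{GuoZu4}). Your proposal does not close this gap; it restates it. The entire difficulty is concentrated in the step you defer to an ``expectation'': showing that, after splitting the sum into blocks $k=rn+s$, the first- and second-order corrections in the $\Phi_n(q)$-adic expansion cancel or reassemble into the right-hand side of \eqref{eq:conj-5}. The $q$-Lucas theorem and Lemma~\ref{lem:1} are congruences modulo the \emph{first} power of $\Phi_n(q)$ only; this is exactly why the paper's machinery (Lemma~\ref{lem:2} and its use in Theorem~\ref{thm:main-1}) never produces more than a single factor of $\Phi_{n^j}(q)$ at the lower levels, and why even the $\Phi_n(q)^2$ analogue of the block decomposition (Conjecture~\ref{conj:1}) is itself only conjectural. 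So ``mirroring the architecture of Lemma~\ref{lem:2}'' cannot, even in principle, reach $\Phi_n(q)^3$ without a fundamentally new ingredient.

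Your fallback suggestion, creative microscoping with an extra parameter $a$ specialized at $a=1$, is indeed how the $m=1$ case modulo $\Phi_n(q)^3$ was proved in \cite{Guo-racsam}, but it is not known how to make that method compatible with the multi-block structure for general $m$: the parametric congruence modulo $(1-aq^n)(a-q^n)\Phi_n(q)$ lives at the level of a single sum of length $n$ (or $(n-1)/2$), and there is no known way to glue $m$ such parametric evaluations while keeping track of the cross-block error terms to third order. A further caution: your plan implicitly assumes each individual block satisfies the congruence modulo $\Phi_n(q)^3$ (``the block decouples as a clean product''), but in Dwork-type congruences of this depth it is typically only the full sum over all blocks that satisfies the higher congruence, with cancellations between blocks; so even the single-block statement you would need is likely false as stated, not merely unproven. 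In short, the proposal identifies the right obstacle but offers no mechanism to overcome it, which is consistent with the fact that the paper's authors left precisely this statement open.
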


The $m=1$ case of \eqref{eq:conj-5} was given by the author \cite{Guo-racsam}. Conjecture 4.1 in \cite{GuoZu4} can be deduced from
Conjectures \ref{conj:4} and \ref{conj:5} in this section.
It is worth mentioning that \eqref{eq:conj-5} is equivalent to the following $q$-congruence: modulo $\Phi_n(q)^3$,
\begin{align*}
&\sum_{k=0}^{mn-1}(-1)^k[4k+1]\frac{(q;q^2)_k^4(q^2;q^4)_k}{(q^2;q^2)_k^4(q^4;q^4)_k}q^{k}  \\
&\quad\equiv [n]\frac{(q^2;q^4)_{(n-1)/4}^2}{(q^4;q^4)_{(n-1)/4}^2}\sum_{k=0}^{m-1}(-1)^k[4k+1]_{q^n}\frac{(q^n;q^{2n})_k^4(q^{2n};q^{4n})_k} {(q^{2n};q^{2n})_k^4(q^{4n};q^{4n})_k}q^{nk}
\end{align*}
for the same reason as \eqref{eq:reason}.

\begin{conjecture}\label{conj:6}
Let $m$ and $n$ be positive integers with $n$ odd. Then, modulo $\Phi_n(q)^3$,
\begin{align}
\sum_{k=0}^{mn-1}(-1)^k[4k+1]\frac{(q^2;q^4)_k^3}{(q^4;q^4)_k^3}\,q^{k}
&\equiv\dfrac{[n]_{q^2}(-q^3;q^4)_{(n-1)/2}}
{(-q^5;q^4)_{(n-1)/2}}(-q)^{(1-n)/2} \sum_{k=0}^{m-1}(-1)^k(4k+1)\frac{(\frac{1}{2})_k^3}{k!^3}.  \label{eq:conj-6}
\end{align}
\end{conjecture}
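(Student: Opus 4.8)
The strategy I would pursue mirrors the proof of Lemma~\ref{lem:2}, but it must be pushed to the third power of $\Phi_n(q)$, and the subtle point is that the leading order turns out to be vacuous. First I would isolate the base case $m=1$. Since $(q^2;q^4)_k$ first acquires a factor divisible by $\Phi_n(q)$ at $k=(n+1)/2$ (coming from $1-q^{2n}$), the cube $(q^2;q^4)_k^3$ is divisible by $\Phi_n(q)^3$ for every $k\geqslant(n+1)/2$, while the denominator $(q^4;q^4)_k$ stays coprime to $\Phi_n(q)$ in that range. Hence, modulo $\Phi_n(q)^3$, the truncation at $mn-1$ with $m=1$ collapses to $0\leqslant k\leqslant(n-1)/2$, and \eqref{eq:conj-6} becomes the single-block $q$-analogue of Van Hamme's (B.2),
\[
\sum_{k=0}^{(n-1)/2}(-1)^k[4k+1]\frac{(q^2;q^4)_k^3}{(q^4;q^4)_k^3}\,q^{k}
\equiv\frac{[n]_{q^2}(-q^3;q^4)_{(n-1)/2}}{(-q^5;q^4)_{(n-1)/2}}(-q)^{(1-n)/2}\pmod{\Phi_n(q)^3},
\]
which I would take as the input; it is of the type accessible by creative microscoping (compare the companion $m=1$ case of \eqref{eq:conj-5} in \cite{Guo-racsam}).

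For general $m$, I would split $\sum_{k=0}^{mn-1}=\sum_{r=0}^{m-1}\sum_{s=0}^{(n-1)/2}$ with $k=rn+s$, the terms with $s>(n-1)/2$ being killed modulo $\Phi_n(q)^3$ by an extra cubed vanishing factor in the numerator. The per-block $q$-factorial ratio satisfies $\frac{(q^2;q^4)_{rn+s}^3}{(q^4;q^4)_{rn+s}^3}\equiv\frac{(\frac12)_r^3}{r!^3}\,\frac{(q^2;q^4)_{s}^3}{(q^4;q^4)_{s}^3}\pmod{\Phi_n(q)}$ by the $q$-Lucas theorem and Lemma~\ref{lem:1}, so at leading order block $r$ reproduces $(-1)^r(\tfrac12)_r^3/r!^3$ times the base sum. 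The decisive feature, however, is that the prefactor contains $[n]_{q^2}=\frac{1-q^{2n}}{1-q^2}$, which is divisible by $\Phi_n(q)$; hence both sides of \eqref{eq:conj-6} already vanish to first order and this leading computation only yields the trivial congruence $0\equiv0$ modulo $\Phi_n(q)$. All of the arithmetic content — in particular the weight $(4k+1)$ and the constant $C_m:=\sum_{k=0}^{m-1}(-1)^k(4k+1)(\tfrac12)_k^3/k!^3$ on the right — lives at the second and third orders in $\Phi_n(q)$.

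The hard part is therefore to carry the block analysis two orders beyond $q$-Lucas. Concretely, writing $q^{4rn}=(q^n)^{4r}=(1+(q^n-1))^{4r}=1+4r(q^n-1)+\binom{4r}{2}(q^n-1)^2+\cdots$, the reduction $[4(rn+s)+1]\equiv[4s+1]$ acquires a correction proportional to $4r(q^n-1)$; since the prefactor already supplies one factor of $\Phi_n(q)\mid (q^n-1)$, this correction (together with the analogous second-order contributions from the $q$-factorial ratio and from $q^{k}$) survives at the relevant order and is exactly what should promote the leading $\sum_r(-1)^r(\tfrac12)_r^3/r!^3$ into $\sum_r(-1)^r(4r+1)(\tfrac12)_r^3/r!^3=C_m$. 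Making this rigorous requires a second- and third-order expansion, about the primitive $n$-th roots of unity, of each block's $q$-factorial ratio and of $[4k+1]$ and $q^{k}$ simultaneously, together with a proof that the resulting corrections assemble \emph{exactly} into the full polynomial $\text{prefactor}\times C_m$ rather than merely into its leading term. The $q$-Lucas theorem is only a statement modulo $\Phi_n(q)$ and provides no control at these orders; supplying a uniform (in $m$, $n$, $r$) higher-order substitute — equivalently, a mod-$\Phi_n(q)^3$ refinement of the summand identification \eqref{eq:reason}, which is presently known only modulo $\Phi_n(q)^2$ — is the essential difficulty and the reason \eqref{eq:conj-6} remains conjectural.
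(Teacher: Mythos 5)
First, a point of order: the statement you were asked to prove is not a theorem of the paper but one of its open problems. The congruence \eqref{eq:conj-6} is stated in Section~6 as a conjecture, and the paper offers no proof of it; it only records that the $m=1$ case was proved by the author and Zudilin in \cite{GuoZu} (your pointer to \cite{Guo-racsam} is the corresponding fact for \eqref{eq:conj-5}, not for \eqref{eq:conj-6}), and that Conjecture~4.2 of \cite{GuoZu4} would follow from it. So there is no proof in the paper to compare yours against, and your proposal --- which ends by conceding that the congruence ``remains conjectural'' --- is, as you yourself say, not a proof. Judged as a proof it has a gap exactly where you locate it; judged as an analysis of the obstruction, it is essentially correct, and it agrees with the paper's implicit reason for listing this statement among the conjectures rather than the theorems.

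Your diagnosis of the obstruction is accurate and worth spelling out against the paper's actual machinery. The method that proves Lemma~\ref{lem:2}, Theorem~\ref{thm:main-1} and the Section~5 results combines the $q$-Lucas theorem and Lemma~\ref{lem:1} (which control block ratios only modulo the \emph{first} power of $\Phi_n(q)$) with known mod-$\Phi_n(q)^2$ evaluations of the base block; this suffices precisely because the target moduli there, $\Phi_{n^r}(q)\prod_{j=1}^{r}\Phi_{n^j}(q)$, require only the first power of each lower cyclotomic factor. For \eqref{eq:conj-6} the target is $\Phi_n(q)^3$, and the prefactor is divisible by $\Phi_n(q)$ exactly once, since $[n]_{q^2}$ is while $(-q^3;q^4)_{(n-1)/2}$ and $(-q^5;q^4)_{(n-1)/2}$ are coprime to $\Phi_n(q)$ for odd $n$ --- in contrast to Lemma~\ref{lem:2}, where the factor $[n](q^3;q^4)_{(n-1)/2}/(q^5;q^4)_{(n-1)/2}$ is a $\Phi_n(q)$-unit because the denominator also contains $1-q^n$. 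Consequently the $q$-Lucas block computation establishes \eqref{eq:conj-6} modulo $\Phi_n(q)$ only (marginally more than the ``$0\equiv 0$'' you describe, but still two powers short). Your further observation is the key structural one: at leading order the blocks produce the multiplier $\sum_r(-1)^r(\tfrac12)_r^3/r!^3$ \emph{without} the weight $4r+1$, and that weight can only arise from corrections such as $[4(rn+s)+1]=[4s+1]+q^{4s+1}[4rn]$ with $[4rn]\equiv 0\pmod{\Phi_n(q)}$, i.e.\ from terms invisible to $q$-Lucas. The missing tool is exactly what you name: a uniform mod-$\Phi_n(q)^2$ (hence, against the once-vanishing prefactor, mod-$\Phi_n(q)^3$) refinement of the block reduction, equivalently of \eqref{eq:reason}. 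The paper itself cannot close this gap; its Conjectures~\ref{conj:1}--\ref{conj:4} are proposed as precisely such refinements for the related sums, and they too are left open.
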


The $m=1$ case of \eqref{eq:conj-6} was proved by the author and Zudilin \cite{GuoZu}. Conjecture 4.2 in \cite{GuoZu4} is a consequence of
Conjecture \ref{conj:6}.

\begin{conjecture}\label{conj:7}
Let $m$ and $n$ be positive integers with $n$ odd. Then, modulo $\Phi_n(q)^2$,
\begin{align}
&\sum_{k=0}^{mn-1}(-1)^k[3k+1]\frac{(q;q^2)_k^3}{(q;q)_k^3}
\equiv (-1)^{(n-1)/2}q^{(n-1)^2/4}[n] \sum_{k=0}^{m-1}(-1)^k(3k+1)\frac{8^k(\frac{1}{2})_k^3}{k!^3}. \label{eq:conj-7}
\end{align}
\end{conjecture}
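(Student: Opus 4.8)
The plan is to follow the block-splitting strategy used for Lemma~\ref{lem:2} and the theorems of Sections 2--5, but to push the analysis one order further, from the modulus $\Phi_n(q)$ afforded by the $q$-Lucas theorem to $\Phi_n(q)^2$. Write $c_k=(-1)^k[3k+1]\dfrac{(q;q^2)_k^3}{(q;q)_k^3}$ for the summand and $C_k=(-1)^k[3k+1]_{q^n}\dfrac{(q^n;q^{2n})_k^3}{(q^n;q^n)_k^3}$ for its $q^n$-analogue; by the reasoning behind \eqref{eq:reason} the classical right-hand summand $(-1)^k(3k+1)8^k(\tfrac12)_k^3/k!^3$ may be replaced by $C_k$ modulo $\Phi_n(q)^2$, so it is equivalent to prove the self-similar form with $C_j$ on the right. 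First I would dispose of $m=1$: for $(n+1)/2\leqslant k\leqslant n-1$ the factor $(q;q^2)_k^3$ already carries $\Phi_n(q)^3$ while $(q;q)_k$ stays coprime to $\Phi_n(q)$, so $c_k\equiv 0\pmod{\Phi_n(q)^2}$ there and the sum collapses to $\sum_{k=0}^{(n-1)/2}c_k$, which is the known base $q$-supercongruence that I take as the starting point. For general $m$ I would split $k=jn+i$ with $0\leqslant i\leqslant n-1$, reducing the claim to the block identity
\begin{equation*}
\sum_{i=0}^{n-1}c_{jn+i}\equiv (-1)^{(n-1)/2}q^{(n-1)^2/4}[n]\,C_j \pmod{\Phi_n(q)^2},\qquad 0\leqslant j\leqslant m-1,
\end{equation*}
whose summation over $j$ gives \eqref{eq:conj-7}.

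To prove each block identity I would expand every $n$-periodic piece to first order modulo $\Phi_n(q)^2$. The $q$-integer factor is explicit: since $q^{3jn}\equiv 1+3j(q^n-1)\pmod{\Phi_n(q)^2}$ and $(q^n-1)/(1-q)=-[n]$, one gets
\begin{equation*}
[3(jn+i)+1]\equiv [3i+1]+3j\,q^{3i+1}[n] \pmod{\Phi_n(q)^2}.
\end{equation*}
The $q$-factorial ratio $\dfrac{(q;q^2)_{jn+i}^3}{(q;q)_{jn+i}^3}$ should then separate into a unit global factor $G_j$ (the surviving $n$-divisible factors of the product, which to leading order equal $(q^n;q^{2n})_j^3/(q^n;q^n)_j^3$) and a local factor $(q;q^2)_i^3/(q;q)_i^3$, up to a $\Phi_n(q)$-divisible correction. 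Feeding these back and using $(-1)^{jn}=(-1)^j$, the leading contribution $G_j\sum_i c_i$ reproduces $G_j$ times the $m=1$ base case, supplying the prefactor $(-1)^{(n-1)/2}q^{(n-1)^2/4}[n]$ and the factorial part of $C_j$; the $[n]$-correction cross term $3j\,q^{3i+1}[n]$, summed against the local product, is exactly what should generate the remaining $[3j+1]_{q^n}$ factor of $C_j$. Because $[n]\equiv 0\pmod{\Phi_n(q)}$, each cross term is $O(\Phi_n(q))$, so only its value modulo $\Phi_n(q)$ matters.

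I expect the control of those cross terms to be the main obstacle: matching them requires knowing the product $G_j$ and the local sum one order beyond $q$-Lucas, i.e.\ modulo $\Phi_n(q)^2$, which is precisely the content of the still-open Conjectures~\ref{conj:1}--\ref{conj:4}; a direct attack therefore collides with the very difficulty the paper leaves unresolved. A more promising route is creative microscoping: deform $c_k$ to $(-1)^k[3k+1]\dfrac{(q;q^2)_k(aq;q^2)_k(q/a;q^2)_k}{(q;q)_k(aq^2;q^2)_k(q^2/a;q^2)_k}$, prove a parametric version of \eqref{eq:conj-7} modulo $(1-aq^n)(a-q^n)$, and let $a\to 1$, where this product becomes a unit multiple of $\Phi_n(q)^2$. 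Since $(1-aq^n)(a-q^n)$ splits into two distinct factors, it suffices to verify the parametric identity at $a=q^{n}$ and $a=q^{-n}$, where the series should terminate or telescope.

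The genuinely hard part of this second route is that, unlike the $m=1$ case, the general-$m$ partial sum does not terminate, so the two specializations $a=q^{\pm n}$ will likely have to be established by constructing an explicit $q$-WZ pair for the truncated sums; one must then track the prefactor $q^{(n-1)^2/4}[n]$ and the Kronecker-type sign through the partial summation and through the limit $a\to 1$ uniformly in $m$. I would regard producing that $q$-WZ certificate (or, equivalently, the $\Phi_n(q)^2$-refinement of $q$-Lucas adapted to the $[3k+1]$ weight) as the decisive step on which the whole argument stands or falls.
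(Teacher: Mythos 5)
The statement you are attempting is not proved in the paper at all: it is one of the open problems of Section 6, and the only thing the paper records about it is that its $m=1$ case holds (even modulo $\Phi_n(q)^3$) by the author's earlier work \cite{Gu20a}. So there is no proof of the paper's to compare against; the question is whether your attempt succeeds where the paper stops, and it does not. The parts of your reduction that you carry out are correct: the terms with $(n+1)/2\leqslant k\leqslant n-1$ vanish modulo $\Phi_n(q)^3$ because $(q;q^2)_k^3$ contains $(1-q^n)^3$ while $(q;q)_k$ is coprime to $\Phi_n(q)$; the expansion $[3(jn+i)+1]\equiv[3i+1]+3jq^{3i+1}[n]\pmod{\Phi_n(q)^2}$ is right; and \eqref{eq:conj-7} is indeed equivalent to the block identity $\sum_{i=0}^{n-1}c_{jn+i}\equiv(-1)^{(n-1)/2}q^{(n-1)^2/4}[n]\,C_j\pmod{\Phi_n(q)^2}$. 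But that block identity \emph{is} the conjecture: it is precisely the $\Phi_n(q)^2$-refinement of the $q$-Lucas block computation that the paper (Lemma \ref{lem:2} and its analogues) can only perform modulo the first power of $\Phi_n(q)$, and it is a statement of exactly the same kind as the paper's own open Conjectures \ref{conj:1} and \ref{conj:2}. Reducing an open conjecture to an equivalent open statement, which you candidly acknowledge doing, is not a proof.

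Your fallback route via creative microscoping is likewise left unexecuted, and it contains a concrete error: the deformation you propose, with denominator $(q;q)_k(aq^2;q^2)_k(q^2/a;q^2)_k$, does not specialize to the conjectured summand at $a=1$, since $(q;q)_k(q^2;q^2)_k^2\neq(q;q)_k^3$; the denominator must be deformed as $(q;q)_k(aq;q)_k(q/a;q)_k$ (this is the parametric sum under which the $m=1$ case is actually handled in \cite{Gu20a}). Even with the correct deformation, the decisive steps are missing: you do not prove the parametric congruence modulo $(1-aq^n)(a-q^n)$ for general $m$, and you yourself note that at $a=q^{\pm n}$ the truncated sums no longer terminate, so that a $q$-WZ certificate would be needed --- a certificate you identify as the step on which "the whole argument stands or falls" but never construct. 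In short, what you have written is a well-informed research plan that correctly locates the difficulty, not a proof of the statement.
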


We point out that the $m=1$ case of \eqref{eq:conj-7} is also true modulo $\Phi_n(q)^3$, which was established by the author \cite{Gu20a}.
Moreover, Conjecture 4.4 in \cite{GuoZu4} can be derived from Conjecture \ref{conj:7}.

\begin{conjecture}\label{conj:8}
Let $m$ and $n$ be positive integers with $n$ odd. Then, modulo $\Phi_n(q)^2$,
\begin{align}
&\sum_{k=0}^{mn-1}(-1)^k[4k+1]\frac{(q;q^2)_k^3}{(q^2;q^2)_k^3}\,q^{k^2}
\equiv (-1)^{(n-1)/2}q^{(n-1)^2/4}[n] \sum_{k=0}^{m-1}(-1)^k(4k+1)\frac{(\frac{1}{2})_k^3}{k!^3}. \label{eq:conj-8}
\end{align}
\end{conjecture}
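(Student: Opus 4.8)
The plan is to adapt the $q$-Lucas strategy of Lemma~\ref{lem:2} and then push it one order further. Using $\frac{(q;q^2)_k}{(q^2;q^2)_k}=\frac{1}{(-q;q)_k^2}{2k\brack k}$, the left-hand summand of \eqref{eq:conj-8} becomes $(-1)^k[4k+1]\frac{q^{k^2}}{(-q;q)_k^6}{2k\brack k}^3$. Writing $k=rn+s$ with $0\leqslant r\leqslant m-1$ and $0\leqslant s\leqslant n-1$, the $q$-Lucas theorem gives ${2rn+2s\brack rn+s}\equiv{2r\choose r}{2s\brack s}\pmod{\Phi_n(q)}$ (and this vanishes for $s>(n-1)/2$), while Lemma~\ref{lem:1} gives $(-q;q)_{rn+s}\equiv 2^r(-q;q)_s$. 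Since $q^n\equiv1$, we also have $q^{k^2}\equiv q^{s^2}$ and $[4k+1]\equiv[4s+1]$ modulo $\Phi_n(q)$. Combining these with the known $m=1$ case (the $q$-analogue of Van Hamme's (D.2) supercongruence)
$$
\sum_{s=0}^{(n-1)/2}(-1)^s[4s+1]\frac{(q;q^2)_s^3}{(q^2;q^2)_s^3}q^{s^2}\equiv(-1)^{(n-1)/2}q^{(n-1)^2/4}[n]\pmod{\Phi_n(q)^2},
$$
this first-order reduction shows that, modulo $\Phi_n(q)$, the left side of \eqref{eq:conj-8} is congruent to $(-1)^{(n-1)/2}q^{(n-1)^2/4}[n]\sum_{r=0}^{m-1}(-1)^r\frac{(\frac{1}{2})_r^3}{r!^3}$.

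The crucial observation is that $\Phi_n(q)\mid[n]$ for $n>1$, so this step only certifies that both sides of \eqref{eq:conj-8} are divisible by $\Phi_n(q)$ (the case $n=1$ being immediate and separate); the actual content lives at the next order, and in particular the weight $(4r+1)$ on the right is invisible at first order. My proposal is therefore to expand every factor modulo $\Phi_n(q)^2$, using $q^n=1+(q^n-1)$ together with $(q^n-1)^2\equiv0$, so that $(q^n)^M\equiv1+M(q^n-1)$ for any exponent $M$. The decisive term comes from the weight: since $q^{4k+1}=q^{4s+1}(q^n)^{4r}$ and $\frac{q^n-1}{1-q}=-[n]$, one finds
$$
[4k+1]\equiv[4s+1]+4r\,[n]\,q^{4s+1}\pmod{\Phi_n(q)^2},
$$
and it is exactly this $4r\,[n]$ correction that is poised to supply the missing factor $4r$ in the weight $(4r+1)$. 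The analogous first-order expansions of $q^{k^2}=q^{s^2}(q^n)^{r^2n+2rs}$, of $(-q;q)_{rn+s}^6$, and of ${2rn+2s\brack rn+s}^3$ must be carried out to the same precision and added in.

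The main obstacle is precisely these last two expansions: one needs a refinement of the $q$-Lucas theorem giving ${2rn+2s\brack rn+s}$ modulo $\Phi_n(q)^2$, and a refinement of Lemma~\ref{lem:1} giving $(-q;q)_{rn+s}$ modulo $\Phi_n(q)^2$, each carrying an explicit $r$- and $s$-dependent correction proportional to $(q^n-1)$. The delicate computational heart is then to sum all four corrections over $s$: the resulting weighted $s$-sums are variants of the $m=1$ evaluation (they carry extra powers of $q$ in $s$), so one must show both that the quadratic-in-$r$ contributions coming from $q^{k^2}$ cancel and that what survives collapses to exactly $(-1)^{(n-1)/2}q^{(n-1)^2/4}[n]\sum_{r=0}^{m-1}(-1)^r(4r+1)\frac{(\frac{1}{2})_r^3}{r!^3}$. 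An alternative I would keep in reserve is creative microscoping: deform the summand by a parameter $a$ so that the target modulus arises as $(1-aq^n)(a-q^n)\big|_{a=1}=(1-q^n)^2$, prove the parametric $m=1$ evaluation, and then lift to general $m$; the difficulty there is that the $a$-deformed summand is no longer a pure $q$-binomial, so the $q$-Lucas reduction of the $r$-sum does not apply verbatim and would itself have to be re-established.
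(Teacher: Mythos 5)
The first thing to say is that \eqref{eq:conj-8} is not a theorem of the paper at all: it is Conjecture \ref{conj:8} in Section 6, one of the open problems. The paper offers no proof, only the remarks that the $m=1$ case holds modulo $\Phi_n(q)^3$ by \cite{GuoZu} (incidentally, this is the $q$-analogue of Van Hamme's (B.2), not (D.2)), and that Conjecture 4.5 of \cite{GuoZu4} would follow from \eqref{eq:conj-8}. So the only question is whether your proposal proves it, and it does not. What you actually establish is correct but weak: the $k=rn+s$ decomposition, the identity $\frac{(q;q^2)_k}{(q^2;q^2)_k}=\frac{1}{(-q;q)_k^2}{2k\brack k}$, the $q$-Lucas theorem, Lemma \ref{lem:1}, and the $m=1$ evaluation --- precisely the machinery of Lemma \ref{lem:2} --- give the congruence only modulo $\Phi_n(q)$, where (for $n>1$) both sides vanish. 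Every input you use is valid only to first order, so nothing modulo $\Phi_n(q)^2$ is certified by this part.

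The entire content of \eqref{eq:conj-8} therefore lives in your ``next order'' plan, and that plan is unexecuted at exactly its hard points. (i) The expansion $[4k+1]\equiv[4s+1]+4r[n]q^{4s+1}\pmod{\Phi_n(q)^2}$ is correct and does explain heuristically where the weight $4r+1$ should come from, but it is only one of four interacting corrections. (ii) The refinements of the $q$-Lucas theorem and of Lemma \ref{lem:1} modulo $\Phi_n(q)^2$ that you invoke do not exist in the paper and are not routine: the order-$(q^n-1)$ corrections to ${2rn+2s\brack rn+s}$ and to $(-q;q)_{rn+s}$ are Jacobsthal/Wolstenholme-type expansions whose coefficients are $q$-harmonic sums depending on both $r$ and $s$, and you give no closed form for them. (iii) Even granting (ii), the corrections coming from $q^{k^2}$, from ${2rn+2s\brack rn+s}^3$ and from $(-q;q)_{rn+s}^6$ all carry $s$-dependent weights, so summing over $s$ produces new sums such as $\sum_{s}(-1)^s\, s\,[4s+1]\frac{(q;q^2)_s^3}{(q^2;q^2)_s^3}q^{s^2}$ modulo $\Phi_n(q)$, which are evaluated neither by the $m=1$ case nor by anything in the paper; your claim that the quadratic-in-$r$ contributions cancel and that the remainder collapses to the right-hand side of \eqref{eq:conj-8} is an assertion of the conjecture, not a derivation. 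This is consistent with the paper's own limits: its method only ever combines first-order $q$-Lucas information with known top-level $q$-supercongruences and never performs a second-order $q$-Lucas expansion, which is why it proves Theorems \ref{thm:main-1}--\ref{thm:main-3} but leaves \eqref{eq:conj-8} and the companion conjectures of Section 6 open. Your fallback (creative microscoping in a parameter $a$) is the more plausible route to a genuine proof, but, as you concede, the $q$-Lucas reduction of the $r$-sum breaks under the $a$-deformation, so it too is currently a gap rather than a proof.
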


It should be mentioned that the $m=1$ case of \eqref{eq:conj-8} also holds modulo $\Phi_n(q)^3$ (see \cite{GuoZu}).
Besides, Conjecture 4.5 in \cite{GuoZu4} follows from Conjecture \ref{conj:8}.

Finally, the $q$-congruences \eqref{eq:conj-6}--\eqref{eq:conj-8} have equivalent forms as before. However, we will not formulate them
specifically here.


\begin{thebibliography}{99}

\bibitem{AAR} G. Andrews, R. Askey and R. Roy, Special Functions, Cambridge Univesity Press, Cambridge, 1999.

\bibitem{De} J. D\'esarm\'enien,
Un analogue des congruences de Kummer pour les $q$-nombres d'Euler, European J. Combin. {\bf 3} (1982), 19--28.

\bibitem{Dwork}B. Dwork, $p$-adic cycles, Publ. Math. Inst. Hautes \'{E}tudes Sci. 37 (1969), 27--115.


\bibitem{GR} G. Gasper and M. Rahman, Basic Hypergeometric Series, 2nd Edition, Encyclopedia of
Mathematics and Its Applications 96, Cambridge University Press, Cambridge, 2004.

\bibitem{Gu19b}V.J.W. Guo, Proof of a $q$-congruence conjectured by Tauraso,
Int. J. Number Theory 15 (2019), 37--41.


\bibitem{Guo-h2}V.J.W. Guo, A family of $q$-congruences modulo the square of a cyclotomic polynomial,
Electron. Res. Arch. 28 (2020), 1031--1036.

\bibitem{Guo-racsam}
V.J.W. Guo, A $q$-analogue of the (A.2) supercongruence of Van Hamme for primes $p\equiv 1\pmod{4}$,
Rev. R. Acad. Cienc. Exactas F\'\i s. Nat., Ser. A Mat. 114 (2020), Art.~123.

\bibitem{Gu20a} V.J.W. Guo, $q$-Analogues of two ``divergent'' Ramanujan-type supercongruences,
Ramanujan J. \textbf{52} (2020), 605--624.

\bibitem{Guo-c3}V.J.W. Guo, $q$-Analogues of Dwork-type supercongruences, J. Math. Anal. Appl. 487 (2020), Art.~124022.

\bibitem{Guo-rima}V.J.W. Guo, Another family of $q$-congruences modulo the square of a cyclotomic polynomial,
Results Math. 76 (2021), Art. 109.

\bibitem{Guo-ijnt}V.J.W. Guo, A further $q$-analogue of Van Hamme's (H.2) supercongruence for primes $p\equiv 3\pmod{4}$,
Int. J. Number Theory  (2021), 1201--1206.

\bibitem{GPZ}V.J.W. Guo, H. Pan, and Y. Zhang, The Rodriguez-Villegas type congruences for truncated $q$-hypergeometric functions,
J. Number Theory 174 (2017), 358-368.

\bibitem{GW}V.J.W. Guo and S.-D Wang,  Factors of certain sums involving central $q$-binomial coefficients,
Rev. R. Acad. Cienc. Exactas F\'is. Nat., Ser. A Mat. 116 (2022), Art. 46.

\bibitem{GZ}V.J.W. Guo, J. Zeng, Some $q$-supercongruences for truncated
basic hypergeometric series, Acta Arith.  171 (2015), 309--326.

\bibitem{GuoZu}V.J.W. Guo and W. Zudilin, A $q$-microscope for supercongruences, Adv. Math. 346 (2019), 329--358.

\bibitem{GuoZu2}V.J.W. Guo and W. Zudilin, On a $q$-deformation of modular forms,
J. Math. Anal. Appl. 475 (2019), 1636--1646.

\bibitem{GuoZu3}V.J.W. Guo and W. Zudilin, A common $q$-analogue of two supercongruences, Results Math. 75 (2020), Art.~46.

\bibitem{GuoZu4}V.J.W. Guo and W. Zudilin, Dwork-type supercongruences through a creative $q$-microscope,
J. Combin. Theory, Ser. A 178 (2021), Art.~105362.

\bibitem{Liu0}J.-C. Liu, Some supercongruences on truncated $_3F_2$ hypergeometric series,
J. Difference Equ. Appl. 24 (2018), 438--451.

\bibitem{Liu}J.-C. Liu, On Van Hamme's (A.2) and (H.2) supercongruences,
J. Math. Anal. Appl. 471 (2019), 613--622.

\bibitem{LP19}
J.-C. Liu and F. Petrov, Congruences on sums of $q$-binomial coefficients,
Adv. Appl. Math. 116 (2020), Art.~102003.

\bibitem{Long}L. Long, Hypergeometric evaluation identities and supercongruences, Pacific J. Math. 249 (2011), 405--418.

\bibitem{LR}L. Long and R. Ramakrishna, Some supercongruences occurring in truncated hypergeometric series,
Adv. Math. 290 (2016), 773--808.

\bibitem{MV}A. Mellit and M Vlasenko, Dworks congruences for the constant terms of powers of a Laurent polynomial,
Int. J. Number Theory 12 (2016), 313--321.

\bibitem{Mortenson1}E. Mortenson, A supercongruence conjecture of Rodriguez-Villegas for a certain truncated hypergeometric function,
J. Number Theory 99 (2003), 139--147.

\bibitem{Olive}G. Olive, Generalized powers, Amer. Math. Monthly 72 (1965) 619--627.

\bibitem{OZ}R. Osburn and W. Zudilin, On the (K.2) supercongruence of Van Hamme, J. Math. Anal. Appl. 433  (2016),  706--711.

\bibitem{Ramanujan}S.~Ramanujan, Modular equations and approximations to $\pi$, Quart. J. Math. Oxford Ser.~(2) 45 (1914), 350--372.

\bibitem{Robert} A.M. Robert, A Course in $p$-adic Analysis, Graduate Texts in Mathematics, 198, Springer-Verlag, New York, 2000.

\bibitem{RV}F. Rodriguez-Villegas,
Hypergeometric families of Calabi-Yau manifolds, in: Calabi-Yau Varieties and Mirror Symmetry (Toronto, ON, 2001),
Fields Inst. Commun., 38, Amer. Math. Soc., Providence, RI, 2003, pp.~223--231.

\bibitem{SunZH}
Z.-H. Sun, Congruences concerning Legendre polynomials, Proc. Amer. Math. Soc. 139 (2011), 1915--1929.

\bibitem{Sun2}Z.-H. Sun, Generalized Legendre polynomials and related supercongruences,
J. Number Theory 143 (2014), 293--319.

\bibitem{Su19}
Z.-W. Sun, Open conjectures on congruences,
Nanjing Univ. J. Math. Biquarterly 36 (2019), no.~1, 1--99.

\bibitem{Swisher}H. Swisher, On the supercongruence conjectures of van Hamme, Res. Math. Sci. 2 (2015), Art. 18.

\bibitem{Ta13}R. Tauraso, Some $q$-analogs of congruences for central binomial sums,
Colloq. Math. 133 (2013), 133--143.

\bibitem{Hamme}L. Van Hamme, Some conjectures concerning partial sums of generalized hypergeometric
series, in: $p$-Adic functional analysis (Nijmegen, 1996),
Lecture Notes in Pure and Appl. Math. 192, Dekker, New York, 1997, pp.~223--236.

\bibitem{Wang}C. Wang, A new $q$-extesion of the (H.2) congruence of Van Hamme for primes $p\equiv 1\pmod{4}$,
Results Math. 76 (2021), Art. 205.

\bibitem{Wei}C. Wei, A further $q$-analogue of Van Hamme's (H.2) supercongruence for any prime $p\equiv 1\pmod{4}$,
Results Math. 76 (2021), Art. 92.

\bibitem{ZP20}
Y. Zhang and H. Pan,
On the Atkin and Swinnerton-Dyer type congruences for some truncated hypergeometric ${}_1F_0$ series,
Acta Arith. 198 (2021), 169--186.

\end{thebibliography}
\end{document}